\tikzset{>=stealth',
  head/.style = {fill = white, text=black},
  plaque/.style = {draw, rectangle, minimum size = 10mm}, 
  pil/.style={->,thick},
  junct/.style = {draw,circle,inner sep=0.5pt,outer sep=0pt, fill=black}
  }
\newtheorem{theorem}{Theorem}[section]
\newtheorem{lemma}[theorem]{Lemma}
\newtheorem{proposition}[theorem]{Proposition}
\newtheorem{corollary}[theorem]{Corollary}
\newtheorem{conjecture}[theorem]{Conjecture}
\theoremstyle{definition}
\newtheorem{definition}[theorem]{Definition}
\newenvironment{example}
  {\pushQED{\qed}\examplex}
  {\popQED\endexamplex}
\theoremstyle{remark}
\numberwithin{equation}{section}
\newcommand{\flags}{\ensuremath{\mathcal{F}}}
\newcommand{\GL}{\ensuremath{GL}}
\newcommand{\downshift}{\downarrow \!}
\newcommand{\rightshift}{\rightarrow \!}
\newcommand{\xx}{\ensuremath{\mathbf{x}}}
\newcommand{\Ess}{\ensuremath{\mathrm{Ess}}}
\newcommand{\Dom}{\ensuremath{\mathrm{Dom}}}
\newcommand{\Esshat}{\ensuremath{\mathrm{Ess}^\prime}}
\DeclareMathOperator{\Spec}{\ensuremath{\mathrm{Spec}}}
\newcommand{\wt}{\ensuremath{\mathrm{wt}}}
\newcommand{\Schub}{\ensuremath{\mathfrak{S}}}
\newcommand{\bpd}[1]{{\sf BPD}(#1)}
\newcommand{\init}[1]{{\tt init}(#1)}
\newcommand{\rank}[1]{{\tt rank}(#1)}
\begin{document}


\title[Gr\"obner geometry of Schubert polynomials through ice]{Gr\"obner geometry of Schubert polynomials \\ through ice}  

\author[Z. Hamaker]{Zachary Hamaker}
\address[ZH]{Department of Mathematics, University of Florida, Gainesville, FL 32601}
\email{zhamaker@ufl.edu}

\author[O. Pechenik]{Oliver Pechenik}
\address[OP]{Department of Mathematics, University of Michigan, Ann Arbor, MI 48109}
\email{pechenik@umich.edu}

\author[A. Weigandt]{Anna Weigandt}
\address[AW]{Department of Mathematics, University of Michigan, Ann Arbor, MI 48109}
\email{weigandt@umich.edu}


\date{\today}


\keywords{}

\begin{abstract}
The geometric naturality of Schubert
polynomials and their combinatorial pipe dream representations was established by
Knutson and Miller (2005) via antidiagonal Gr\"obner degeneration of matrix
Schubert varieties. We consider instead diagonal Gr\"obner degenerations. In
this dual setting, Knutson, Miller, and Yong (2009) obtained alternative combinatorics
for the class of ``vexillary'' matrix Schubert varieties. We
initiate a study of general diagonal degenerations, relating them to a neglected formula of
Lascoux (2002) in terms of the $6$-vertex ice model (recently rediscovered by
Lam, Lee, and Shimozono (2018) in the guise of ``bumpless pipe dreams'').
\end{abstract}

\maketitle

\section{Introduction}
\label{s:intro}

Let $\flags_n$ be the complex flag variety, the parameter space for complete flags of nested vector subspaces of $\mathbb{C}^n$.
The Schubert cell decomposition of $\flags_n$ yields a distinguished $\mathbb{Z}$-linear basis for the cohomology ring $H^\star(\flags_n)$.
On the other hand, A.~Borel \cite{Borel} presented this ring as
\[
H^\star(\flags_n) \cong \mathbb{Z}[x_1, \dots, x_n] / I,
\]
where $I$ is the ideal generated by the nonconstant elementary symmetric polynomials. 

It is natural to desire polynomial representatives for the Schubert basis with respect to this presentation. Building on work of I.~Bernstein, I.~Gelfand, and S.~Gelfand \cite{Bernstein.Gelfand.Gelfand}, A.~Lascoux and M.-P.~Sch\"utzenberger \cite{Lascoux.Schutzenberger} introduced \emph{Schubert polynomials}.
These are combinatorially well-adapted coset representatives for images of Schubert cohomology classes under the Borel isomorphism. 
In fact, Lascoux and Sch\"utzenberger  introduced more general \emph{double Schubert polynomials} that represent Schubert classes in the $T$-equivariant cohomology of $\flags_n$ (where $T \subset \GL_n(\mathbb{C})$ is the group of invertible diagonal matrices).

Since their introduction, (double) Schubert polynomials have become central objects in algebraic combinatorics (see, e.g., \cite{Billey.Jockusch.Stanley, Fomin.Stanley, Hamaker.Pechenik.Speyer.Weigandt, Lenart, Macdonald:notes}). 
They have also been interpreted through the geometry of degeneracy loci and used to unify many classical results in that area \cite{Fulton, Fulton.Pragacz}. 
A.~Knutson and E.~Miller \cite{Knutson.Miller} gave an alternative geometric justification for the naturality of Schubert polynomials by Gr\"obner degeneration of certain affine varieties.
Moreover, they recovered aspects of the combinatorics of Schubert polynomials through this geometry, including identifying irreducible components of the degeneration with the  \emph{pipe dreams} of earlier combinatorial formulas \cite{Bergeron.Billey,Fomin.Kirillov}. This explicit degeneration demonstrates the geometric naturality of pipe dream combinatorics.

Lascoux \cite{Lascoux:ice} introduced an alternate combinatorial model for (double) Schubert polynomials using  states of the square-ice (``$6$-vertex'') model from statistical physics. (For background and history of these ideas, see, e.g., \cite{Baxter, Bressoud, Elkies.Kuperberg.Larsen.Propp,Kuperberg, Robbins.Rumsey}.)
Recently, T.~Lam, S.-J.~Lee, and M.~Shimozono~\cite{Lam.Lee.Shimozono} rediscovered this Schubert polynomial model and gave a cleaner description in terms of \emph{bumpless pipe dreams}.
The connection between \cite{Lam.Lee.Shimozono} and \cite{Lascoux:ice} is detailed in~\cite{Weigandt}. 

Although both ordinary pipe dreams and bumpless pipe dreams compute the same double Schubert polynomials and appear superficially similar, they compute these polynomials in fundamentally different ways. In particular, (except in trivial cases) no weight-preserving bijection exists between these two sets. In light of this fact, the geometric content of bumpless pipe dreams and Lascoux's ice formula remains unclear.

\begin{example}
Let $w$ be the permutation $2143 \in S_4$. The three ordinary pipe dreams
\[\begin{tikzpicture}[x=1.5em,y=1.5em]
\draw[step=1,gray, thin] (0,1) grid (4,5);
\draw[color=black, thick](0,1)rectangle(4,5);
\draw[thick,rounded corners] (0,4.5)--(1.5,4.5)--(1.5,5);
\draw[thick,rounded corners] (0,3.5)--(.5,3.5)--(.5,5);
\draw[thick,rounded corners] (0,2.5)--(1.5,2.5)--(1.5,3.5)--(2.5,3.5)--(2.5,4.5)--(3.5,4.5)--(3.5,5);
\draw[thick,rounded corners](0,1.5)--(.5,1.5)--(.5,3.5)--(1.5,3.5)--(1.5,4.5)--(2.5,4.5)--(2.5,5);
\end{tikzpicture}
\hspace{3em}
\begin{tikzpicture}[x=1.5em,y=1.5em]
\draw[step=1,gray, thin] (0,1) grid (4,5);
\draw[color=black, thick](0,1)rectangle(4,5);
\draw[thick,rounded corners] (0,4.5)--(1.5,4.5)--(1.5,5);
\draw[thick,rounded corners] (0,3.5)--(.5,3.5)--(.5,5);
\draw[thick,rounded corners] (0,2.5)--(.5,2.5)--(.5,3.5)--(2.5,3.5)--(2.5,4.5)--(3.5,4.5)--(3.5,5);
\draw[thick,rounded corners](0,1.5)--(.5,1.5)--(.5,2.5)--(1.5,2.5)--(1.5,4.5)--(2.5,4.5)--(2.5,5);
\end{tikzpicture}
\hspace{3em}
\begin{tikzpicture}[x=1.5em,y=1.5em]
\draw[step=1,gray, thin] (0,1) grid (4,5);
\draw[color=black, thick](0,1)rectangle(4,5);
\draw[thick,rounded corners] (0,4.5)--(1.5,4.5)--(1.5,5);
\draw[thick,rounded corners] (0,3.5)--(.5,3.5)--(.5,5);
\draw[thick,rounded corners] (0,2.5)--(.5,2.5)--(.5,3.5)--(1.5,3.5)--(1.5,4.5)--(3.5,4.5)--(3.5,5);
\draw[thick,rounded corners](0,1.5)--(.5,1.5)--(.5,2.5)--(1.5,2.5)--(1.5,3.5)--(2.5,3.5)--(2.5,5);
\end{tikzpicture}
\]
 for this permutation
present the corresponding double Schubert polynomial as
\[\mathfrak S_w= (x_1-y_1)(x_3-y_1)+(x_1-y_1)(x_2-y_2)+(x_1-y_1)(x_1-y_3).\]
There are also three bumpless pipe dreams
\[
\begin{tikzpicture}[x=1.5em,y=1.5em]
\draw[step=1,gray, thin] (0,1) grid (4,5);
\draw[color=black, thick](0,1)rectangle(4,5);
\draw[thick,rounded corners] (.5,1)--(.5,3.5)--(4,3.5);
\draw[thick,rounded corners] (1.5,1)--(1.5,4.5)--(4,4.5);
\draw[thick,rounded corners] (2.5,1)--(2.5,1.5)--(4,1.5);
\draw[thick,rounded corners] (3.5,1)--(3.5,2.5)--(4,2.5);
\end{tikzpicture}
\hspace{3em}
\begin{tikzpicture}[x=1.5em,y=1.5em]
\draw[step=1,gray, thin] (0,1) grid (4,5);
\draw[color=black, thick](0,1)rectangle(4,5);
\draw[thick,rounded corners] (.5,1)--(.5,2.5)--(2.5,2.5)--(2.5,3.5)--(4,3.5);
\draw[thick,rounded corners] (1.5,1)--(1.5,4.5)--(4,4.5);
\draw[thick,rounded corners] (2.5,1)--(2.5,1.5)--(4,1.5);
\draw[thick,rounded corners] (3.5,1)--(3.5,2.5)--(4,2.5);
\end{tikzpicture}
\hspace{3em}
\begin{tikzpicture}[x=1.5em,y=1.5em]
\draw[step=1,gray, thin] (0,1) grid (4,5);
\draw[color=black, thick](0,1)rectangle(4,5);
\draw[thick,rounded corners] (.5,1)--(.5,3.5)--(4,3.5);
\draw[thick,rounded corners] (1.5,1)--(1.5,2.5)--(2.5,2.5)--(2.5,4.5)--(4,4.5);
\draw[thick,rounded corners] (2.5,1)--(2.5,1.5)--(4,1.5);
\draw[thick,rounded corners] (3.5,1)--(3.5,2.5)--(4,2.5);
\end{tikzpicture}\]
for $w$. These give a presentation of the same double Schubert polynomial as
\[\mathfrak S_w= (x_1-y_1)(x_3-y_3)+(x_1-y_1)(x_2-y_1)+(x_1-y_1)(x_1-y_2).\]
Note that although these expressions are necessarily equal, this equality is only apparent after significant factoring and reorganizing. In particular, there is no weight-preserving way to match up the terms of the two summations.
\end{example}

In Lie-theoretic terms, one may identify $\flags_n$ with the homogeneous space $\GL_n(\mathbb{C}) /B$, where $B$ denotes the Borel subgroup of invertible upper triangular matrices.
Pulling back a Schubert cell in $\flags_n$ to $\GL_n(\mathbb{C})$, we may then consider its closure in the affine space of all $n \times n$ complex matrices.
W.~Fulton \cite{Fulton} showed that these \emph{matrix Schubert varieties} are irreducible, gave set-theoretic defining equations for them, and showed that these equations define reduced schemes. 
The key observation of Knutson and Miller is that these \emph{Fulton generators} form a Gr\"obner basis under any {\it antidiagonal} term order
(that is, any term order under which the initial term of each minor of a generic matrix is the product of the entries along its main antidiagonal).

It is at least as natural to consider the dual notion of {\it diagonal} term orders (that is, term orders where initial terms of minors are products along main diagonals). For example, much of the commutative algebra literature on determinantal ideals and generalizations focuses on this case (e.g., \cite{Sturmfels, Gonciulea.Miller, Bruns.Conca}).
Indeed, Knutson and Miller first tried unsuccessfully to carry out their program in this context before they realized that the antidiagonal term orders were more amenable to their approach.

The geometry of diagonal degenerations, in fact, is more complicated than the antidiagonal case.
In general, the Fulton generators are \emph{not} a Gr\"obner basis with respect to diagonal term orders. In \cite{Knutson.Miller.Yong}, it was shown that Fulton generators are diagonal Gr\"obner exactly for the class of matrix Schubert varieties called \emph{vexillary}. For general matrix Schubert varieties, the diagonal Gr\"obner degenerations can even fail to be reduced. Moreover, in the nonreduced case, different diagonal term orders can yield distinct scheme structures on the limiting space of the degeneration.

In this paper, we return to the diagonal setting. Despite the additional geometric complication, we propose that diagonal Gr\"obner degenerations naturally give rise to bumpless pipe dreams in an exactly analogous fashion to how antidiagonal degenerations yield ordinary pipe dreams.
Our main conjecture is the following:
\begin{conjecture}
\label{c:main}
	Let $\init{X_w}$ be the  Gr\"obner degeneration of a matrix Schubert variety with respect to any diagonal term order.
	The irreducible components of $\init{X_w}$, counted with multiplicities, naturally correspond to the bumpless pipe dreams for the permutation $w$.
\end{conjecture}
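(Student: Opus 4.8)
Conjecture~\ref{c:main} is open; we outline a plan of attack.

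\emph{Setup.} Any Gr\"obner degeneration is flat, so $\init{X_w}$ has the same Hilbert series, hence the same $K$-polynomial $\Groth_w$ and the same multidegree $\Schub_w$, as $X_w$ \cite{Knutson.Miller}; and an initial ideal for a term order is monomial, so $\init{X_w}$ is a union of coordinate subspaces of $\mathbb{C}^{n\times n}$ carrying possibly non-reduced scheme structure. Matrix Schubert varieties are Cohen--Macaulay \cite{Fulton}, and one expects $\init{X_w}$ to remain so --- in particular equidimensional of dimension $n^2-\ell(w)$, without embedded components --- with establishing this being part of the problem. On the combinatorial side, $\Schub_w=\sum_{D\in\bpd{w}}\mathrm{wt}(D)$, where $\mathrm{wt}(D)=\prod(x_i-y_j)$ runs over the blank tiles $(i,j)$ of $D$ \cite{Lam.Lee.Shimozono,Weigandt}. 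To each $D\in\bpd{w}$ we attach the coordinate subspace $L_D\subseteq\mathbb{C}^{n\times n}$ cut out by the vanishing of the matrix entries at the blank tiles of $D$; since a bumpless pipe dream for $w$ has exactly $\ell(w)$ blank tiles, $\dim L_D=n^2-\ell(w)=\dim X_w$, and the $T\times T$-equivariant multidegree $\mathcal{C}(L_D)$ equals $\mathrm{wt}(D)$.

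\emph{Identifying the components.} The core task is to show that the minimal primes of $\init(I_w)$ are exactly $\{I(L_D):D\in\bpd{w}\}$. We see two routes. \emph{Route A:} produce a combinatorial description of a diagonal Gr\"obner basis for $I_w$ extending the Fulton generators, and read off the minimal primes and multiplicities directly from the resulting monomial ideal. \emph{Route B}, which seems more promising: recurse via geometric vertex decomposition in the sense of \cite{Knutson.Miller.Yong}. Peeling off the matrix entry smallest in the chosen diagonal term order expresses $\init{X_w}$ through the degenerations of two matrix Schubert varieties of smaller size --- a ``cone'' piece and an ``intersection'' piece --- in a manner that should mirror the recursion on $\bpd{w}$ obtained by deleting a corner tile (equivalently, the droop/transition recursion of \cite{Lam.Lee.Shimozono}). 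Induction on $n$ and on $\ell(w)$, with base cases $\ell(w)\le 1$, then produces the component list.

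\emph{Multiplicities.} Once the components are pinned down, the total multiplicity is forced: $\deg \init{X_w}=\deg X_w=|\bpd{w}|$ (equivalently, the number of reduced pipe dreams for $w$), and since each component is a linear subspace, $\sum_L m_L=|\bpd{w}|=\sum_L\#\{D:L_D=L\}$, where $m_L$ is the multiplicity of $L$. Refining this to the component-wise equality $m_L=\#\{D:L_D=L\}$ --- the content of Conjecture~\ref{c:main} --- follows from the multidegree identity $\sum_L m_L\,\mathcal{C}(L)=\Schub_w=\sum_L\#\{D:L_D=L\}\,\mathcal{C}(L)$ provided the multidegrees of the participating coordinate subspaces are linearly independent, which we expect; in any case it falls out of Route~A, where $m_L$ is a length computation in the explicit monomial ideal, or of Route~B by tracking multiplicities through the recursion.

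\emph{Main obstacle.} We expect the recursive step of Route~B to be the crux, precisely because the diagonal setting is not ``clean'': unlike the antidiagonal case --- where the initial ideal is automatically squarefree and Knutson--Miller's subword-complex machinery applies --- here the geometric vertex decomposition can fail to be reduced, so one must show both that the recursion closes (the cone and intersection pieces degenerating, at least up to radical, to the expected unions of coordinate subspaces) and that no spurious components appear, the latter being equivalent, via the multidegree count, to the tightness of the bumpless pipe dream formula. A secondary difficulty is that in the non-reduced regime the scheme $\init{X_w}$ genuinely depends on the diagonal term order, so the argument must be uniform over all such orders, or be supplemented by an argument that the multiset of components is order-independent even when the scheme structure is not.
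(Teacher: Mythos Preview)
The statement is a conjecture, and the paper does not prove it in full; it establishes the special case of \emph{banner} permutations (Theorem~\ref{t:main}/Theorem~\ref{t:banner}). Your write-up correctly flags the conjecture as open and sketches a strategy, so the right comparison is between your proposed routes and the paper's proof of the banner case.

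The paper follows something closer to your Route~A than your Route~B. It introduces an explicit modification of the Fulton generators, the \emph{CDG generators} $G_w$ (Section~\ref{s:cdg}), and sets $J_w=\langle \init g:g\in G_w\rangle$. The recursion driving the induction is \emph{not} geometric vertex decomposition in the sense of \cite{Knutson.Miller.Yong}; rather it is the Lascoux--Sch\"utzenberger \emph{transition} formula (Theorem~\ref{t:LS_tree}) lifted to an identity of monomial ideals,
\[
J_w=(J_v+\langle z_{rs}\rangle)\cap\bigcap_{u\in\Phi(v,r)}J_u
\]
for block predominant $w$ (Theorem~\ref{thm:predom_transition}). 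This matches the bumpless-pipe-dream recursion of Lemma~\ref{lemma:diagramtransition} exactly, so one obtains $J_w=\bigcap_{P\in\bpd{w}}\mathcal{L}_P$ by induction. The step from $J_w\subseteq\init{I_w}$ to equality is handled not by a linear-independence argument on multidegrees, as you propose, but by the Knutson--Miller lemma (Lemma~\ref{l:cheating_lemma}): once $J_w$ is known to be equidimensional, radical, contained in $\init{I_w}$, and to have the same equivariant class $\Schub_w(\mathbf{x};\mathbf{y})$, the containment is forced to be an equality. The vexillary case is imported from \cite{Knutson.Miller.Yong}, and the pieces are glued by a block-sum compatibility (Lemma~\ref{l:block_grobner}, Proposition~\ref{p:block_CDG}).

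Two remarks on your outline. First, your Route~B peels off the \emph{smallest} variable in the term order; the paper's recursion instead removes the maximal corner $(r,s)$ of $D_w$ and uses transition on the permutation, which interacts directly with bumpless pipe dreams. Second, your concern about linear independence of the multidegrees $\mathcal{C}(L)$ is genuine (distinct bumpless pipe dreams can have equal diagrams, hence equal $L_D$ and equal $\mathcal{C}(L_D)$), but the paper sidesteps it entirely via Lemma~\ref{l:cheating_lemma}; the component-wise multiplicity statement then follows because, in the banner case, $\init{I_w}$ is radical. Neither your routes nor the paper's methods currently handle the general case: the paper notes that beyond the CDG class (conjecturally characterized by pattern avoidance, Conjecture~\ref{conj:patterns}) the initial scheme can be non-reduced and term-order dependent, exactly the obstacle you identify.
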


In particular, Conjecture~\ref{c:main} implies that, although different choices of diagonal term orders may yield degenerations to distinct schemes, the reduced irreducible components of the degeneration and their multiplicities do not depend on such a choice.
The vexillary case of Conjecture~\ref{c:main} follows from \cite{Knutson.Miller.Yong} and results in~\cite{Weigandt}.
Our main result is to prove Conjecture~\ref{c:main} for a larger class of permutations, called \emph{banner permutations}, extending the vexillary case. 
For these permutations, we are able to exhibit explicit diagonal Gr\"obner bases by modifying the Fulton generators in an appropriate fashion.
\begin{theorem}
\label{t:main}
	If $w$ is a banner permutation, then the CDG generators for $X_w$ are a diagonal Gr\"obner basis. The irreducible components of $\init{X_w}$, counted with multiplicities, naturally correspond to the bumpless pipe dreams for the permutation $w$.
\end{theorem}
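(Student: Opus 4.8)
\medskip
\noindent\emph{Sketch of the intended argument.}\quad
Both assertions will follow from one identity of ideals. Write $I_w := I(X_w) \subseteq \mathbb{C}[z_{ij}]$, fix a diagonal term order $\prec$, and let $J$ be the monomial ideal generated by the diagonal initial terms of the CDG generators. The plan is to show that $J = \operatorname{in}_\prec(I_w)$, that $J$ is squarefree (hence radical), and that its minimal primes are exactly the coordinate ideals
\[
P_D := \big\langle\, z_{ij} : (i,j) \text{ is a blank tile of } D \,\big\rangle, \qquad D \in \bpd{w},
\]
one for each bumpless pipe dream of $w$. Given this, the CDG generators form a diagonal Gr\"obner basis --- they lie in $I_w$, being $\mathbb{C}[z_{ij}]$-combinations of the Fulton generators, and they generate $I_w$ because each replacement of a Fulton minor is unit-triangular with respect to the unchanged generators --- and $\init{X_w} = V(J)$ is the \emph{reduced} union of the coordinate subspaces indexed by $\bpd{w}$, with all multiplicities equal to $1$ and, in particular, independent of the choice of diagonal term order.

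The inclusion $J \subseteq \operatorname{in}_\prec(I_w)$ holds automatically, so the work lies in the reverse inclusion, which I would deduce from a multidegree comparison rather than from a direct analysis of $S$-polynomials. (Throughout, multidegrees are taken for the positive $\mathbb{Z}^{2n}$-grading of $\mathbb{C}[z_{ij}]$ in which each variable has nonzero degree, so that a nonzero module has nonzero multidegree and $\mathfrak{S}_w$ is recovered in the $x_i-y_j$ normalization of the excerpt.) A Gr\"obner degeneration preserves the multigraded Hilbert series, hence the multidegree, so $\operatorname{multideg}\big(\mathbb{C}[z_{ij}]/\operatorname{in}_\prec(I_w)\big) = \operatorname{multideg}\big(\mathbb{C}[z_{ij}]/I_w\big) = \mathfrak{S}_w$ by Knutson--Miller \cite{Knutson.Miller}. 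On the other hand, every bumpless pipe dream of $w$ has exactly $\ell(w) = \codim X_w$ blank tiles, so once the minimal primes of $J$ are identified with the $P_D$ the ideal $J$ is equidimensional, and
\[
\operatorname{multideg}\big(\mathbb{C}[z_{ij}]/J\big) = \sum_{D \in \bpd{w}} \operatorname{multideg}\big(\mathbb{C}[z_{ij}]/P_D\big) = \sum_{D \in \bpd{w}} \ \prod_{(i,j) \text{ blank in } D} (x_i - y_j) = \mathfrak{S}_w,
\]
the final equality being Lascoux's ice formula, equivalently the bumpless pipe dream formula of Lam--Lee--Shimozono \cite{Lascoux:ice, Lam.Lee.Shimozono}. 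Since $J \subseteq \operatorname{in}_\prec(I_w)$ and the two quotients have equal multidegree, the submodule $\operatorname{in}_\prec(I_w)/J$ has vanishing multidegree; but $\mathbb{C}[z_{ij}]/J$ is radical, so has no embedded primes, so any nonzero submodule has full dimension and hence nonzero multidegree. Therefore $\operatorname{in}_\prec(I_w)/J = 0$, i.e.\ $J = \operatorname{in}_\prec(I_w)$, which gives the Gr\"obner basis statement and the component description at once.

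What remains --- and this is the main obstacle --- is the combinatorial heart, where the banner hypothesis is essential. First one must define the CDG generators explicitly: for each essential box whose rank condition is not already ``diagonal-friendly'' one must specify the polynomial combination of Fulton minors that replaces the offending minors, and banner permutations are precisely those for which this can be arranged so that the resulting generator has a well-defined diagonal initial monomial, namely the main-diagonal product of the dominant minor in the combination, the same for every diagonal term order. Second, one must analyze $J$: because a minimal prime of a squarefree monomial ideal is a minimal set of variables meeting every generating monomial, the task is to prove that the minimal sets of matrix positions meeting all of these diagonals are exactly the blank-tile sets of the bumpless pipe dreams of $w$. For vexillary $w$ this recovers the non-intersecting lattice path description underlying \cite{Knutson.Miller.Yong} together with the dictionary of \cite{Weigandt}; for banner $w$ it requires new structural lemmas on how the modified rank conditions overlap, and establishing those lemmas is where the real difficulty lies. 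One could alternatively verify Buchberger's criterion for the CDG generators directly, but the approach above isolates exactly the new combinatorics while importing the soft geometry from the antidiagonal theory of \cite{Knutson.Miller}.
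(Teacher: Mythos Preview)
Your outer framework---show $J\subseteq\init{I_w}$, compute the $T\times T$-class of each quotient as $\Schub_w$, then use that $J$ is radical and equidimensional to force equality---is exactly the paper's strategy (their Lemma~6.1 is the statement you invoke). Where you diverge is in the combinatorial core. The paper does \emph{not} identify the minimal primes of $J$ via a direct hitting-set analysis of diagonal monomials. Instead it argues by induction on the maximal corner using the transition formula: the main technical result (Theorem~5.4) is a recurrence on the monomial ideals themselves,
\[
J_w=(J_v+\langle z_{rs}\rangle)\cap\bigcap_i J_{u^{(i)}},
\]
proved by explicit monomial-ideal manipulations (Lemmas~5.2--5.3). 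Coupled with the transition bijection on bumpless pipe dreams (Lemma~2.3), this yields $J_w=\bigcap_{P\in\bpd{w}}\mathcal L_P$ inductively, without ever characterizing minimal vertex covers directly. The predominant case is handled this way; the vexillary case is imported from \cite{Knutson.Miller.Yong}; and block sums glue the pieces (Lemma~4.4, Proposition~6.5). Your proposed direct route is not obviously unworkable, but you have left it entirely unexecuted, and it is unclear that the ``new structural lemmas'' you gesture at exist in usable form for nonvexillary banner permutations.

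Two points in your sketch are inaccurate and should be corrected. First, the CDG generators are defined uniformly for \emph{every} permutation (Section~3): they are the size-$(r_w(i,j)+1)$ minors of $Z^{\Dom(w)}_{[i],[j]}$ for $(i,j)\in\Esshat(w)$, together with the variables $z_{ij}$ for $(i,j)\in\Dom(w)$. They are not bespoke polynomial combinations engineered per essential box, and their diagonal initial terms are always well defined; the only question is whether those initial terms already generate $\init{I_w}$. Second, banner permutations are not ``precisely those for which this can be arranged'': the paper conjectures a strictly larger pattern-avoidance class of CDG permutations (Conjecture~7.1), and banner is simply the class reachable by the transition-based induction.
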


The precise definition of \emph{banner permutations} appears in Sections~\ref{s:proof}, while the \emph{CDG generators} are defined for general $w$ in Section~\ref{s:cdg}.

The recursive arguments in \cite{Knutson.Miller} rely on the authors introducing and developing the combinatorics of a new \emph{mitosis} recursion for ordinary pipe dreams (see also, \cite{Miller:mitosis}). In contrast, bumpless pipe dreams appear well-adapted to the simpler and more classical \emph{transition} formula of Lascoux and Sch\"utzenberger \cite{Lascoux.Schutzenberger:tree} (see also, \cite{Macdonald:notes}). Our proof of Theorem~\ref{t:main} relies heavily on this latter recursion. Recently, Knutson \cite{Knutson:cotransition} has developed a dual notion of \emph{cotransition}, allowing him to simplify antidiagonal arguments of \cite{Knutson.Miller} in a similar fashion to the arguments here.

We believe that Theorem~\ref{t:main} holds in somewhat more generality than proved in this paper (see Conjecture~\ref{conj:patterns}) and we have hope that Theorem~\ref{t:main} can be thus extended using similar techniques to those employed here. However, we do not know a description of diagonal Gr\"obner bases in the most general case. Indeed, since different choices of diagonal term order can lead to different initial ideals, it is not guaranteed that there exists an explicit uniform description of Gr\"obner bases for all diagonal orders. Nonetheless, Conjecture~\ref{c:main} is supported by calculations in such cases. 
By computer, we have systematically verified Conjecture~\ref{c:main} through the symmetric group $S_7$ for one choice of diagonal term order, as well as in a variety of other experiments for larger permutations and for other diagonal term orders.

\ 

\noindent \textbf{Organization:} 
In Section~\ref{s:background}, we recall necessary background information. In Section~\ref{s:cdg}, we introduce generators  for Schubert determinantal ideals, which we call the \emph{CDG generators}.  These are a modification of the more standard Fulton generators. Section~\ref{s:block} introduces a block construction for partial permutations and develops its combinatorics. In Section~\ref{s:monomial}, we  introduce \emph{block predominant permutations} and establish a recurrence for certain monomial ideals constructed from CDG generators. We apply this recurrence in Section~\ref{s:proof} to prove Theorem~\ref{t:main}. In Section~\ref{s:future}, we make some conjectures and remarks regarding extensions and applications of Theorem~\ref{t:main}. 

\section{Background}
\label{s:background}
\subsection{Combinatorics of permutations}
Define  $[n] = \{1,2,\dots,n\}$ and let $S_n$ denote the symmetric group on $[n]$.
Each permutation $w \in S_n$ is determined by its one-line notation $w_1 w_2 \dots w_n$ where $w_i = w(i)$.
The \textbf{Rothe diagram} of $w$ is the set
\[
D_w = \{(i,j) \in [n] \times [n]: w(i) > j, w^{-1}(j) > i\}.
\]
We visualize $D_w$ as a subset of $[n] \times [n]$ by placing $\bullet$ in $(i,w(i))$ for each $i \in [n]$, then drawing lines below and to the right of each $\bullet$.
Then $D_w$ is the complement of the marked boxes.
For example, $D_{42153}$ is $\{(1,1),(1,2),(1,3),(2,1),(4,3)\}$, which can be visualized as
\[
  \begin{tikzpicture}[x=1.5em,y=1.5em]
      \draw[color=black, thick](0,1)rectangle(5,6);
     \filldraw[color=black, fill=gray!30, thick](0,5)rectangle(1,6);
     \filldraw[color=black, fill=gray!30, thick](0,4)rectangle(1,5);
      \filldraw[color=black, fill=gray!30, thick](2,5)rectangle(3,6);
     \filldraw[color=black, fill=gray!30, thick](1,5)rectangle(2,6);
     \filldraw[color=black, fill=gray!30, thick](2,2)rectangle(3,3);
     \draw[thick,ForestGreen] (5,5.5)--(3.5,5.5)--(3.5,1);
     \draw[thick,ForestGreen] (5,4.5)--(1.5,4.5)--(1.5,1);
     \draw[thick,ForestGreen] (5,3.5)--(.5,3.5)--(.5,1);
     \draw[thick,ForestGreen] (5,2.5)--(4.5,2.5)--(4.5,1);
     \draw[thick,ForestGreen] (5,1.5)--(2.5,1.5)--(2.5,1);
     \filldraw [black](3.5,5.5)circle(.1);
     \filldraw [black](1.5,4.5)circle(.1);
     \filldraw [black](.5,3.5)circle(.1);
     \filldraw [black](4.5,2.5)circle(.1);
     \filldraw [black](2.5,1.5)circle(.1);
     \end{tikzpicture}.
\]
The \textbf{essential set} of $w$ is 
\[
\Ess(w) = \{(i,j) \in D_w: (i+1,j), (i,j+1) \notin D_w\}.
\]
These are the maximally southeast cells in each connected component of $D_w$.  For instance, $\Ess(42153)=\{(1,3),(2,1),(4,3)\}$.

The $i$th \textbf{row} of $D_w$ is
\[
\{j\in [n]: (i,j) \in D_w\}.
\]
A permutation is \textbf{vexillary} if its rows are totally ordered by inclusion. For example, $42153$ is not vexillary, as neither of rows $2$ and $4$ is contained in the other.
The \textbf{Lehmer code} of a permutation $w$ is the sequence $c(w) = (c_1,\dots,c_n)$ where $c_i$ is the cardinality of the $i$th row of $D_w$. 
A permutation $w$ is \textbf{dominant} if $c(w)$ is weakly decreasing.
For example, $c(42153) = (3,1,0,1,0)$, so $42153$ is not dominant.

To every permutation $w \in S_n$, we associate a \textbf{rank function} $r_w : [n] \times [n] \to \mathbb{Z}$, where 
\[
r_w(i,j) = \# \{k \leq i : w(k) \leq j\}.
\]
For $v,w \in S_n$, we say $v \leq w$ in \textbf{Bruhat order} if $r_v(i,j) \geq r_w(i,j)$ for all $i,j \in [n]$.
We write $\lessdot$ for the covering relation in Bruhat order.

A {\bf partial permutation} is a $0$--$1$ matrix with at most one $1$ in each row and each column. The definitions of Rothe diagrams, essential sets, Lehmer codes, and rank functions naturally extend to partial permutations.   Let $M_{m,n}$ denote the set of $m \times n$ matrices over $\mathbb{C}$ and define $M_n \coloneqq M_{n,n}$. An $m\times n$ partial permutation $w\in M_{m,n}$ can be (uniquely) completed to a permutation matrix $\widetilde{w}\in M_{\max{\{m,n\}}}$.  This completion respects diagrams and essential sets.

\subsection{Matrix Schubert varieties}
Let $Z = (z_{ij})_{i \in [m], j\in [n]}$ be a matrix of distinct indeterminates and let $R = \mathbb{C}[Z]$.
We identify $M_{m,n}$ with the $mn$-dimensional affine space $\Spec R$.
For $A \in M_n$ and $I,J \subset [n]$, let $A_{I,J} = (a_{ij})_{i\in I, j\in J}$.
Then the \textbf{matrix Schubert variety} for $w \in S_n$ is the affine variety
\[
X_w = \left\{A \in M_n: \rank{A_{[i],[j]}} \leq r_w(i,j)\ \mbox{for all}\ i,j \in [n]\right\}.
\]
Let 
\[
I_w = \left\langle (r_w(i,j)+1)\text{-size minors in } Z_{[i],[j]} : i,j \in [n] \right\rangle \subseteq R
\]
be the {\bf Schubert determinantal ideal.}
It is easy to see that $X_w$ is the vanishing locus of the ideal $I_w$. Indeed, Fulton \cite[Proposition~3.3]{Fulton} showed that $I_w$ is prime, so
\[
X_w \cong \Spec R/I_w
\]
as reduced schemes. Moreover, he established that it is enough to consider the smaller generating set of $I_w$:
\begin{equation}\label{eq:fulton}
	I_w = \left\langle (r_w(i,j)+1)\text{-size minors in } Z_{[i],[j]} : (i,j) \in \Ess(w) \right\rangle.
\end{equation}
The minors in Equation~\eqref{eq:fulton} are called the \textbf{Fulton generators} of $I_w$.

For example, suppose $w = 42153$. Then the Fulton generators of $I_w$ are 
\begin{equation}\label{eq:Fulton_ex}
z_{11}, z_{12}, z_{13}, z_{21}, \begin{vmatrix}
z_{11} & z_{12} & z_{13} \\
z_{21} & z_{22} & z_{23} \\
z_{31} & z_{32} & z_{33}
\end{vmatrix},
\begin{vmatrix}
z_{11} & z_{12} & z_{13} \\
z_{21} & z_{22} & z_{23} \\
z_{41} & z_{42} & z_{43}
\end{vmatrix},
\begin{vmatrix}
z_{11} & z_{12} & z_{13} \\
z_{31} & z_{32} & z_{33} \\
z_{41} & z_{42} & z_{43}
\end{vmatrix},
\begin{vmatrix}
z_{21} & z_{22} & z_{23} \\
z_{31} & z_{32} & z_{33} \\
z_{41} & z_{42} & z_{43}
\end{vmatrix}.
\end{equation}

Following Equation~\eqref{eq:fulton}, we also define matrix Schubert varieties in $M_{m,n}$, indexed by partial permutations. 
 See \cite[Chapter~15]{miller2004combinatorial} for more details.

\subsection{Bumpless pipe dreams}

Following \cite{Lam.Lee.Shimozono}, a {\bf bumpless pipe dream} is a tiling of the $n\times n$ grid with the six tiles pictured below,
\begin{equation}
\label{eqn:pipes}
\raisebox{-.5em}{
	\begin{tikzpicture}[x=1.5em, y=1.5em]
	\draw[color=black, thick](0,1)rectangle(1,2);
	\draw[thick, rounded corners,ForestGreen] (.5,1)--(.5,1.5)--(1,1.5);
	\end{tikzpicture}
	\hspace{2em}
	\begin{tikzpicture}[x=1.5em, y=1.5em]
	\draw[color=black, thick](0,1)rectangle(1,2);
	\draw[thick, rounded corners,ForestGreen] (.5,2)--(.5,1.5)--(0,1.5);
	\end{tikzpicture}
	\hspace{2em}
	\begin{tikzpicture}[x=1.5em, y=1.5em]
	\draw[color=black, thick](0,1)rectangle(1,2);
	\draw[thick, rounded corners,ForestGreen] (0,1.5)--(1,1.5);
	\draw[thick, rounded corners,ForestGreen] (.5,1)--(.5,2);
	\end{tikzpicture}
	\hspace{2em}
	\begin{tikzpicture}[x=1.5em, y=1.5em]
	\draw[color=black, thick](0,1)rectangle(1,2);
	\end{tikzpicture}
	\hspace{2em}
	\begin{tikzpicture}[x=1.5em, y=1.5em]
	\draw[color=black, thick](0,1)rectangle(1,2);
	\draw[thick, rounded corners,ForestGreen] (0,1.5)--(1,1.5);
	\end{tikzpicture}
	\hspace{2em}
	\begin{tikzpicture}[x=1.5em, y=1.5em]
	\draw[color=black, thick](0,1)rectangle(1,2);
	\draw[thick, rounded corners,ForestGreen] (.5,1)--(.5,2);
	\end{tikzpicture}}
\end{equation}
so that there are $n$ pipes which
\begin{enumerate}
	\item start at the right edge of the grid,
	\item end at the bottom of the grid, and
	\item pairwise cross at most one time.
\end{enumerate}
If $P$ is a bumpless pipe dream, we define a permutation $w$ by setting $w(i)$ to be the column in which the $i$th pipe exits (labeling rows from top to bottom).
Write $\bpd{w}$ for the set of bumpless pipe dreams for $w$.  The ${\bf diagram}$ of $P$ is \[D(P):=\{(i,j): (i,j) \, \text{is a blank tile in} \, P\}.\] Each bumpless pipe dream has an associated weight
$\displaystyle \wt(P)=\prod_{(i,j)\in D(P)}(x_i-y_j)$.

Lam--Lee--Shimozono showed that the double Schubert polynomial $\mathfrak S_w(\mathbf x;\mathbf y)$ can be expressed as a sum over bumpless pipe dreams.
\begin{theorem}[{\cite[Theorem 5.13]{Lam.Lee.Shimozono}}]
	\label{thm:BPDformula}
\[ \mathfrak S_w(\mathbf x;\mathbf y)=\sum_{P\in \bpd{w}} \wt(P).\]
\end{theorem}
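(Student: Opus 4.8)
The plan is to prove Theorem~\ref{thm:BPDformula} by showing that the bumpless pipe dream sum $\sum_{P\in\bpd{w}}\wt(P)$ satisfies the same recursion and base case as the double Schubert polynomial $\mathfrak{S}_w(\mathbf{x};\mathbf{y})$. Since the excerpt stresses that bumpless pipe dreams are well-adapted to the Lascoux--Sch\"utzenberger transition recursion, I would use that recursion rather than divided differences, and argue by induction along it (the recursion is well-founded, terminating at dominant permutations). The base case is then immediate: if $w$ is dominant, $\mathfrak{S}_w=\prod_{(i,j)\in D_w}(x_i-y_j)$, and a direct check shows $\bpd{w}$ has a single element---the ``Rothe'' bumpless pipe dream, whose blank tiles are exactly $D_w$, because a northwest-justified Young diagram forces every pipe and leaves no room to droop---so both sides equal $\prod_{(i,j)\in D_w}(x_i-y_j)$.

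For the inductive step, recall that the transition recursion fixes the last descent $r$ of $w$, takes $s$ to be the largest index exceeding $r$ with $w(s)<w(r)$, sets $v=wt_{rs}$ (so $\ell(v)=\ell(w)-1$) and $c=w(s)$, and gives
\[
\mathfrak{S}_w=(x_r-y_c)\,\mathfrak{S}_v+\sum_{v'}\mathfrak{S}_{v'},
\]
where the finitely many $v'$ are the permutations $vt_{r'r}$ with $r'<r$ and $\ell(vt_{r'r})=\ell(w)$ (each strictly lower than $w$ in the well-founded order underlying transition). By the inductive hypothesis, $\mathfrak{S}_v$ and each $\mathfrak{S}_{v'}$ equal the corresponding bumpless pipe dream sums, so it remains to prove the matching identity directly on bumpless pipe dreams. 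The plan here is to partition $\bpd{w}$ according to the local configuration of pipes in a neighborhood of the cell $(r,c)$: the block in which $(r,c)$ is a blank tile should biject weight-preservingly with $\bpd{v}$, with the deletion of that blank tile accounting for the factor $(x_r-y_c)$, while the remaining bumpless pipe dreams should be sorted, via a droop/undroop-type move performed near $(r,c)$, into the sets $\bpd{v'}$.

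I expect the main obstacle to be precisely this combinatorial step: upgrading the local modification near $(r,c)$ to a genuine weight-preserving bijection valid in \emph{every} configuration, not merely a generic one. One must verify that the move is well-defined (no pipe is forced to cross another a second time), that it changes the associated permutation by exactly the transposition dictated by transition, and that the weight bookkeeping---in particular the separated factor $(x_r-y_c)$---is exactly right; one must also confirm that the permutations $v'$ arising combinatorially coincide with those appearing in the transition formula. Carrying out this case analysis is where the real work lies; granting it, the induction closes and Theorem~\ref{thm:BPDformula} follows.
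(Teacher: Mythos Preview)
The paper does not prove this theorem: it is quoted from \cite{Lam.Lee.Shimozono}, and immediately afterwards the authors write ``For our purposes, we take this theorem to be the definition of the double Schubert polynomial.'' So there is no in-paper proof to compare against.

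That said, your strategy is sound and is in fact implicit in the surrounding material. The equivariant transition recursion you invoke is exactly Theorem~\ref{t:LS_tree}, and the ``hard combinatorial step'' you flag---a weight-compatible bijection between $\bpd{w}$ and $\bpd{v}\cup\bigcup_{u\in\Phi(v,r)}\bpd{u}$ in which the $\bpd{v}$ piece acquires the extra blank tile at $(r,s)$---is precisely Lemma~\ref{lemma:diagramtransition}. The paper does not carry out the case analysis you anticipate either; it cites \cite[Proposition~5.2]{Weigandt} for the bijection. Granting Theorem~\ref{t:LS_tree} and Lemma~\ref{lemma:diagramtransition}, your induction closes exactly as you describe, with the dominant base case handled by the observation that the Rothe bumpless pipe dream is the unique element of $\bpd{w}$ when $D_w$ is a Young diagram (no droop is available), together with the standard fact $\mathfrak{S}_w=\prod_{(i,j)\in D_w}(x_i-y_j)$ for dominant $w$.

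One small correction of language: transition does not literally \emph{terminate} at dominant permutations---it continues until the identity---but since the dominant formula for $\mathfrak{S}_w$ is known independently, it is perfectly legitimate (and matches the paper's own inductive structure in Proposition~\ref{p:predominant}) to take dominant permutations as the base case.
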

For our purposes, we take this theorem to be the definition of the double Schubert polynomial; the single Schubert polynomial is obtained from this by setting all $y$ variables to $0$.
For example, the bumpless pipe dreams for $w = 42153$ are (ignore the colors for now)
\begin{equation}\label{eq:bpd_ex}
  \begin{tikzpicture}[x=1.5em, y=1.5em]
  \draw[step=1, gray, thin] (0,1) grid (5,6);
      \draw[color=black, thick](0,1)rectangle(5,6);
     \draw[thick, rounded corners,ForestGreen] (5,5.5)--(3.5,5.5)--(3.5,1);
     \draw[ultra thick, rounded corners,Red] (5,4.5)--(1.5,4.5)--(1.5,1);
     \draw[ultra thick, rounded corners,Blue] (5,3.5)--(.5,3.5)--(.5,1);
     \draw[thick, rounded corners,ForestGreen] (5,2.5)--(4.5,2.5)--(4.5,1);
     \draw[thick, rounded corners,ForestGreen] (5,1.5)--(2.5,1.5)--(2.5,1);
     \end{tikzpicture}
\hspace{2em}
  \begin{tikzpicture}[x=1.5em, y=1.5em]
    \draw[step=1, gray, thin] (0,1) grid (5,6);
      \draw[color=black, thick](0,1)rectangle(5,6);
     \draw[thick, rounded corners,ForestGreen] (5,5.5)--(3.5,5.5)--(3.5,1);
     \draw[thick, rounded corners,ForestGreen] (5,4.5)--(1.5,4.5)--(1.5,1);
     \draw[ultra thick, rounded corners,Blue] (5,3.5)--(2.5,3.5)--(2.5,2.5)--(.5,2.5)--(.5,1);
     \draw[thick, rounded corners,ForestGreen] (5,2.5)--(4.5,2.5)--(4.5,1);
     \draw[thick, rounded corners,ForestGreen] (5,1.5)--(2.5,1.5)--(2.5,1);
     \end{tikzpicture}
\hspace{2em}
  \begin{tikzpicture}[x=1.5em, y=1.5em]
    \draw[step=1, gray, thin] (0,1) grid (5,6);
      \draw[color=black, thick](0,1)rectangle(5,6);
     \draw[thick, rounded corners,ForestGreen] (5,5.5)--(3.5,5.5)--(3.5,1);
   
     \draw[thick, rounded corners,ForestGreen] (5,3.5)--(.5,3.5)--(.5,1);
     \draw[thick, rounded corners,ForestGreen] (5,2.5)--(4.5,2.5)--(4.5,1);
     \draw[thick, rounded corners,ForestGreen] (5,1.5)--(2.5,1.5)--(2.5,1);
  \draw[ultra thick, rounded corners,Red] (5,4.5)--(2.5,4.5)--(2.5,2.5)--(1.5,2.5)--(1.5,1);
     \end{tikzpicture}.
\end{equation}
Hence, 
\[
\Schub_{42153}(\mathbf x;\mathbf y) = (x_1 - y_1)(x_1 - y_2) (x_1-y_3)(x_2-y_1)\bigg(  (x_4-y_3) + (x_3-y_1) + (x_2-y_2)  \bigg).
\]


The {\bf Rothe bumpless pipe dream} of $w$ is the (unique) bumpless pipe dream $P_w$ that has a $\begin{tikzpicture}[scale=.5][x=1.5em, y=1.5em]
	\draw[color=black, thick](0,1)rectangle(1,2);
	\draw[thick, rounded corners,ForestGreen] (.5,1)--(.5,1.5)--(1,1.5);
	\end{tikzpicture}$ tile in position $(i, w(i))$ for all $i$ and contains no $\begin{tikzpicture}[scale=.5][x=1.5em, y=1.5em]
	\draw[color=black, thick](0,1)rectangle(1,2);
	\draw[thick, rounded corners,ForestGreen] (.5,2)--(.5,1.5)--(0,1.5);
	\end{tikzpicture}$ tiles. 
	It is the only bumpless pipe dream $P \in \bpd w$ satisfying $D(P) = D_w$.
For example, the first bumpless pipe dream in \eqref{eq:bpd_ex} is the Rothe bumpless pipe dream of $42153$.

There are natural local moves on bumpless pipe dreams called \textbf{droops} that preserve the permutation.
A droop is performed on a pair $\begin{tikzpicture}[scale=.5][x=1.5em, y=1.5em]
	\draw[color=black, thick](0,1)rectangle(1,2);
	\draw[thick, rounded corners,ForestGreen] (.5,1)--(.5,1.5)--(1,1.5);
	\end{tikzpicture}$ at $(i,j)$ and $\begin{tikzpicture}[scale=.5][x=1.5em, y=1.5em]
	\draw[color=black, thick](0,1)rectangle(1,2);
	\end{tikzpicture}$ at $(k,\ell)$ where $i < k, j < \ell$ by placing $\begin{tikzpicture}[scale=.5][x=1.5em, y=1.5em]
	\draw[color=black, thick](0,1)rectangle(1,2);
	\end{tikzpicture}$ at $(i,j)$, placing  $\begin{tikzpicture}[scale=.5][x=1.5em, y=1.5em]
	\draw[color=black, thick](0,1)rectangle(1,2);
	\draw[thick, rounded corners,ForestGreen] (.5,2)--(.5,1.5)--(0,1.5);
	\end{tikzpicture}$ at $(k,\ell)$ and modifying the pipe originally passing through $(i,j)$ so that it passes through $(k,\ell)$ instead. A droop is permissible if  $(i,j)$ is the only place a pipe bends within the rectangle $[i,k]\times[j,\ell]$.
	For an example, see Equation~\eqref{eq:bpd_ex}, where the bolded blue and red pipes in the diagram on the left correspond to available droops.

\begin{proposition}[{\cite[Proposition 5.3]{Lam.Lee.Shimozono}}]
\label{p:droop}
Let $w$ be a permutation.
Every $P \in \bpd w$ can be obtained from $P_w$ by a sequence of droops.
\end{proposition}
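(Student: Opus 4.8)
The plan is to induct on the number of $\ulcorner$-tiles of $P$, meaning the cells carrying the tile that joins the top and left edges (the second tile in \eqref{eqn:pipes}); write $\lrcorner$-tile for the tile joining the bottom and right edges (the first tile). The argument rests on two observations. First, a direct inspection of the droop move shows that a permissible droop turns the blank tile at $(k,\ell)$ into a $\ulcorner$-tile, turns the $\lrcorner$-tile at $(i,j)$ into a blank, and equips the rerouted pipe only with two new $\lrcorner$-tiles, at $(i,\ell)$ and $(k,j)$, changing no other cell's status as a $\ulcorner$-tile; hence a droop raises the number of $\ulcorner$-tiles by exactly one, and the inverse move, an \emph{undroop}, lowers it by exactly one. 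Second, $P_w$ is the only bumpless pipe dream for $w$ with no $\ulcorner$-tiles: absent such tiles a pipe that has started heading downward can never turn again (the only tile permitting such a turn is a $\ulcorner$-tile), so each of the $n$ pipes enters at the right of some row $i$, travels left, and turns exactly once, necessarily at $(i,w(i))$; since there are moreover no $\ulcorner$-tiles, this configuration is precisely $P_w$.

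Granting these, it suffices to show that any $P \in \bpd{w}$ with at least one $\ulcorner$-tile admits an undroop. Indeed, undrooping repeatedly strictly lowers the number of $\ulcorner$-tiles while remaining in $\bpd{w}$ (each undroop being, by construction, the inverse of a permissible droop), so after finitely many steps one reaches the unique element of $\bpd{w}$ with no $\ulcorner$-tiles, namely $P_w$; reading this chain of undroops backwards exhibits $P$ as a sequence of droops applied to $P_w$.

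To produce the undroop, select a $\ulcorner$-tile of $P$, at $(k,\ell)$, with $k$ as large as possible, and let $p$ be the pipe through it. Following $p$ upward along column $\ell$ from $(k,\ell)$ through vertical and crossing tiles, one reaches a $\lrcorner$-tile at some $(i,\ell)$ with $i<k$: the pipe cannot run off the top of the grid, and the first tile met that is neither vertical nor a crossing must join the bottom and right edges. Following $p$ leftward along row $k$ from $(k,\ell)$ through horizontal and crossing tiles, one likewise reaches a $\lrcorner$-tile at some $(k,j)$ with $j<\ell$. Put $R=[i,k]\times[j,\ell]$. The heart of the argument is to check that $P$ restricted to $R$ is exactly the outcome of drooping a $\lrcorner$-tile from $(i,j)$ to $(k,\ell)$, so that undrooping at $(k,\ell)$ is legitimate. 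Inside $R$ the pipe $p$ bends only at $(i,\ell)$, $(k,\ell)$, and $(k,j)$, by the choices just made. No other pipe has a $\ulcorner$-tile in $R$: all of row $k$ within $R$ is occupied by $p$, so such a tile would lie strictly above row $k$, contradicting the maximality of $k$. No other pipe $q$ has a $\lrcorner$-tile in $R$: chasing $q$ out of that corner rightward and downward, and using the previous point to rule out an intervening $\ulcorner$-tile of $q$ inside $R$, one finds that $q$ is forced either to create an illegal tile or to cross $p$ twice---once inside column $\ell$ or row $i$ and once inside row $k$---both impossible. Finally $(i,j)$ is blank, since any pipe through it would, in the absence of further bends in $R$, run straight into one of $p$'s three corner tiles and create an illegal tile.

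The step I expect to be the genuine obstacle is the exclusion of a stray $\lrcorner$-tile of another pipe from $R$: this is the one place where the hypothesis that pipes pairwise cross at most once must be invoked, and organizing the case analysis cleanly---by whether the offending pipe enters $R$ along the grid boundary or turns into it, and by where its $\lrcorner$-tile sits relative to $(i,j)$, $(i,\ell)$, and $(k,j)$---is the delicate part. The two preliminary observations and the passage from a chain of undroops back to a chain of droops are routine.
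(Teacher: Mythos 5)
Your overall strategy (induct on the number of $\ulcorner$-tiles, characterize $P_w$ as the unique member of $\bpd{w}$ with none, and show every other BPD admits an undroop) is sound, and the two preliminary observations are correct. But there is a genuine error in the key step: you select the $\ulcorner$-tile $(k,\ell)$ with $k$ \emph{as large as possible} and then exclude other $\ulcorner$-tiles from $R$ on the grounds that they ``would lie strictly above row $k$, contradicting the maximality of $k$.'' That is a non sequitur: maximality of $k$ only forbids $\ulcorner$-tiles \emph{below} row $k$, and says nothing about rows above it. The claim is in fact false for the bottommost $\ulcorner$-tile. Take $w = 12543$ and let $P$ be obtained from $P_w$ by drooping the elbow at $(2,2)$ to $(4,3)$ and then the elbow at $(1,1)$ to $(3,2)$ (both droops are permissible). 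In $P$ the $\ulcorner$-tiles sit at $(3,2)$ and $(4,3)$; choosing the bottommost one, $(k,\ell)=(4,3)$, you get $(i,\ell)=(2,3)$, $(k,j)=(4,2)$, so $R=[2,4]\times[2,3]$ — which contains the other $\ulcorner$-tile at $(3,2)$, and moreover $(i,j)=(2,2)$ is not blank (it carries a vertical segment of the pipe through $(3,2)$). So the undroop you propose does not exist at the bottommost $\ulcorner$-tile, and since the $\lrcorner$-exclusion and the blankness of $(i,j)$ both lean on the (false) $\ulcorner$-exclusion, the argument collapses at this point.

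The good news is that the error is in the extremal choice, not the architecture: take instead the \emph{topmost} $\ulcorner$-tile, i.e.\ $k$ as \emph{small} as possible. Then any other $\ulcorner$-tile in $R$ would lie in a row $\leq k$; row $k$ of $R$ is fully occupied by $p$ (two elbows and horizontal segments, none of which can coexist with a $\ulcorner$), and a $\ulcorner$-tile in a row $<k$ genuinely contradicts minimality. With that in hand your chase of a stray $\lrcorner$-tile of $q$ at $(r,c)\in R$ works exactly as you outline: rightward along row $r$ the pipe $q$ cannot turn (no $\ulcorner$ in $R$, none above row $k$ at all), so it either runs into $p$'s elbow at $(i,\ell)$ (illegal tile, when $r=i$) or crosses $p$ at $(r,\ell)$; downward along column $c$ it likewise reaches row $k$, hitting $p$'s elbow at $(k,j)$ (when $c=j$) or crossing $p$ at $(k,c)$ — two crossings, contradiction. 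The same minimality then gives that $(i,j)$ is blank and that the interior cells of row $i$ (resp.\ column $j$) of $R$ carry at most a vertical (resp.\ horizontal) segment, and one should also record, as you gesture at but do not state, that each other pipe meets $R$ in a single straight segment crossing $p$ exactly once both before and after the undroop, so the rerouted configuration is still a bumpless pipe dream for $w$. With ``largest'' replaced by ``smallest'' throughout, your proof goes through. (The paper itself cites this statement to Lam--Lee--Shimozono rather than proving it, so I am judging your argument on its own terms.)
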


We will also need to consider bumpless pipe dreams for partial permutations.  Let $w\in M_{m,n}$ be a partial permutation and $\widetilde{w}$ its completion to a permutation. We define \[{\sf BPD}(w)=\{P\mid_{m\times n}:P\in {\sf BPD}(\widetilde{w})\},\] where $P\mid_{m\times n}$ denotes the restriction of $P$ to its first $m$ rows and $n$ columns.
Note droops only modify positions weakly northwest of cells in the Rothe diagram of $w$.
Therefore, Proposition~\ref{p:droop} shows we can reconstruct $P$ from $P\mid_{m \times n}$ since they are connected to $P_w$ and $P_w\mid_{m \times n}$, respectively, by the same sequence of droops.

\subsection{The transition formula}
(Double) Schubert polynomials satisfy a recurrence called \textbf{transition}.
Let $t_{ij}$ be the transposition $(i\; j) \in S_n$.
For $v \in S_n$ and $r \in [n]$, we define
\[
I(v,r) = \{i < r: v \lessdot vt_{ir}\} \quad \mbox{and} \quad \Phi(v,r) = \{ vt_{ir} : i \in I(v,r)\}.
\]

An {\bf inversion} in $w \in S_n$ is a pair $(i,j)$ such that $i < j$ and $w(i) > w(j)$. {\bf Lexicographic order} on inversions of $w$ is given by $(i_1, j_1) > (i_2, j_2)$ if $i_1 > i_2$ or if $i_1 = i_2$ and $j_1 > j_2$.

\begin{theorem}[Equivariant Transition, {\cite[Proposition 4.1]{KohnertVeigneau}}\footnote{We believe this result was known by experts prior, but we are unaware of  any explicit earlier reference in the literature. The ordinary cohomology case appeared first in~\cite{Lascoux.Schutzenberger:tree}.}]
	\label{t:LS_tree}
	
	Let $w \in S_n$ with lexicographically largest inversion $(r,w^{-1}(s))$ and let $v \coloneqq  w t_{rw^{-1}(s)}.$
	Then $v \lessdot w$ and 
	\[
	\mathfrak{S}_w = (x_r - y_s)\mathfrak{S}_v  + \sum_{u \in \Phi(v,r)} \mathfrak{S}_u.
	\]
\end{theorem}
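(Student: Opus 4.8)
\textbf{Proof proposal for the Equivariant Transition formula (Theorem~\ref{t:LS_tree}).}

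The plan is to prove this by induction on the number of inversions of $w$, using the bumpless pipe dream formula of Theorem~\ref{thm:BPDformula} as the working definition of $\mathfrak{S}_w$. First I would establish the combinatorial backbone: if $(r, c)$ with $c = w^{-1}(s)$ is the lexicographically largest inversion of $w$, then position $(r,s)$ is the maximally southeast cell of $D_w$ in the bottom-most nonempty row, and moreover $(r,s) \in \Ess(w)$ with $r_w(r,s) = r_w(r-1,s)$. From here, I would partition $\bpd{w}$ according to the behavior of the two relevant pipes near $(r,s)$: since $(r,s) \in D(P)$ forces a blank tile there in the Rothe bumpless pipe dream, and droops only act weakly northwest of diagram cells, every $P \in \bpd{w}$ either has a blank tile at $(r,s)$ or has had a pipe droop into it. The first case should biject with $\bpd{v}$ after removing a factor of $(x_r - y_s)$ — this is the ``generic'' part contributing $(x_r - y_s)\mathfrak{S}_v$. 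The second case, where some pipe occupies $(r,s)$, should be organized so that the choice of which pipe droops down corresponds exactly to the choice of $i \in I(v,r)$, with the resulting restricted pipe dream lying in $\bpd{u}$ for $u = vt_{ir} \in \Phi(v,r)$.

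Concretely, I would argue as follows. Let $P \in \bpd w$. Look at the pipe passing through $(r,s)$ — call it pipe $p$ — in the case where $(r,s)$ is not blank. Because $(r,s)$ is essential and extremal, the tile at $(r,s)$ must be a $\begin{tikzpicture}[scale=.5][x=1.5em, y=1.5em]\draw[color=black, thick](0,1)rectangle(1,2);\draw[thick, rounded corners,ForestGreen] (.5,2)--(.5,1.5)--(0,1.5);\end{tikzpicture}$-tile (a ``droop corner''), and there is a unique way to ``undroop'' it upward and leftward to the cell $(i', s)$ with $i' < r$ where pipe $p$ turns, yielding a bumpless pipe dream $P'$ with one fewer blank in column $s$. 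Tracking the permutation change: undrooping this corner swaps the exit columns of pipe $p$ and the pipe that was crossing it, which realizes precisely the transposition $t_{i r}$ acting on the appropriate permutation, and the condition that the droop was permissible (no other pipe bends in the governing rectangle) translates into the covering condition $v \lessdot vt_{ir}$, i.e. $i \in I(v,r)$. The weight bookkeeping is then immediate: removing the droop corner at $(r,s)$ removes exactly one blank cell, but it is the blank at $(r,s)$ contributing $(x_r - y_s)$ that is removed in the generic case, whereas in the droop case the weights match up on the nose after restriction. I would then invoke the inductive hypothesis on $v$ and on each $u \in \Phi(v,r)$ (all of which have fewer inversions than $w$, since $v \lessdot w$ and the $u$'s are obtained from $v$ by further covers that do not increase length past that of $w$) to assemble $\mathfrak{S}_w$.

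Alternatively — and this may be cleaner — I would sidestep reproving a BPD transition identity from scratch by citing that the polynomials $\mathfrak{S}_w$ defined via Theorem~\ref{thm:BPDformula} coincide with the classical double Schubert polynomials of Lascoux–Schützenberger (this is exactly the content of \cite[Theorem 5.13]{Lam.Lee.Shimozono}), and then appeal to the known transition recurrence in that setting, for which \cite{KohnertVeigneau} and \cite{Lascoux.Schutzenberger:tree} are the references. In that case the ``proof'' is a one-paragraph reduction: identify the lexicographically largest inversion, identify $v$ and $\Phi(v,r)$, and quote the cited recurrence verbatim.

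\textbf{Main obstacle.} The delicate point is the bijective step in the second case: verifying that ``undrooping the corner at $(r,s)$'' is well-defined (a unique undroop exists and lands in a genuine bumpless pipe dream), that it is injective with image exactly $\bigsqcup_{u \in \Phi(v,r)} \bpd u$, and — most subtly — that the indexing set it produces is precisely $I(v,r) = \{i < r : v \lessdot vt_{ir}\}$ rather than some larger or smaller set. This requires carefully matching the ``permissible droop'' geometric condition (only one pipe bends in the rectangle $[i,r]\times[s,\ell]$) against the Bruhat-cover condition (no $k$ with $i < k < r$ satisfies $v(i) < v(k) < v(r)$-type constraints after accounting for $v = wt_{rc}$), which is a standard but fiddly wiring-diagram argument. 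Keeping track of exit columns when $w \notin S_n$ sits inside a larger symmetric group, and making sure the droop stays within the $n \times n$ window, is where I would expect to spend the most care; given that, the cleanest exposition is probably the reduction to the classical transition formula via \cite{Lam.Lee.Shimozono}.
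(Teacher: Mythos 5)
Your proposal is essentially sound, but it takes a genuinely different route from the paper: the paper does not prove Theorem~\ref{t:LS_tree} at all. It cites \cite[Proposition~4.1]{KohnertVeigneau} and remarks only that the identity is a straightforward consequence of the equivariant Monk rule on $H_T^\star(\flags_n)$. Your second alternative (identify the bumpless-pipe-dream sum with the classical double Schubert polynomial via \cite[Theorem~5.13]{Lam.Lee.Shimozono} and quote the classical transition recurrence) is therefore essentially the paper's own move, and is the cleanest thing to write. Your first, combinatorial approach --- partitioning $\bpd{w}$ according to whether $(r,s)$ is blank or occupied by an ``undroopable'' elbow, and matching the occupied case with $\bigsqcup_{u\in\Phi(v,r)}\bpd{u}$ --- is exactly the content of the paper's Lemma~\ref{lemma:diagramtransition}, which the paper outsources to \cite[Proposition~5.2]{Weigandt} rather than proving. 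So what you are proposing is a self-contained proof of that bijection plus the observation that it is weight-compatible; this buys independence from \cite{Weigandt} at the cost of the ``fiddly wiring-diagram argument'' you correctly identify as the hard step (matching permissible droops into $(r,s)$ with the cover conditions defining $I(v,r)$). Note also that the paper needs Lemma~\ref{lemma:diagramtransition} separately anyway (in the proof of Proposition~\ref{p:predominant}), so your combinatorial route would not be wasted work.

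One concrete slip: you justify induction by claiming each $u\in\Phi(v,r)$ has fewer inversions than $w$. That is false --- by definition $v\lessdot vt_{ir}=u$, so $\ell(u)=\ell(v)+1=\ell(w)$. Transition-based inductions (as in Section~\ref{s:proof} of the paper) must instead be run on a different well-ordering, e.g.\ the lexicographic position of the maximal corner, which does strictly decrease when passing from $w$ to $v$ and to each $u^{(i)}$. In your setting, however, no induction is needed at all: once the weight-compatible bijection $\Psi$ is established, the identity follows immediately by summing $\wt(P)$ over both sides, since $\mathfrak{S}_w$ is by definition $\sum_{P\in\bpd{w}}\wt(P)$. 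Dropping the induction removes the error and shortens the argument.
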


This result is a straightforward consequence of the equivariant Monk's rule, which determines the equivariant cohomology of $\flags_n$.

The combinatorics of bumpless pipe dreams is compatible with transition.

\begin{lemma}
	\label{lemma:diagramtransition}
	There is a bijection  \[\Psi:\bpd{v}\cup \bigcup_{u\in \Phi(v,r)} \bpd{u}\rightarrow \bpd{w}\] so that
\[D(\Psi(P))=\begin{cases}
D(P)\cup\{(r,s)\}& \text{if}\, P\in \bpd{v} \, \text{and}\\
D(P) &\text{otherwise}.
\end{cases}\]
\end{lemma}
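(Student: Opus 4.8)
The plan is to construct the bijection $\Psi$ explicitly via droops and then verify the diagram condition by a careful analysis of how pipes move. First I would recall from Proposition~\ref{p:droop} that every bumpless pipe dream is reachable from the Rothe bumpless pipe dream by a sequence of droops, so it suffices to understand how $P_w$ relates to $P_v$ and to the $P_u$ for $u \in \Phi(v,r)$, and then transport the droop sequences along. Concretely, since $v = wt_{r,w^{-1}(s)}$ with $(r,w^{-1}(s))$ the lexicographically largest inversion of $w$, the Rothe diagrams satisfy $D_w = D_v \cup \{(r,s)\}$ — this is a standard fact about the last descent, and it matches the desired formula for $\Psi$ on $P_v$. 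So on Rothe bumpless pipe dreams, $\Psi(P_v) = P_w$ should be realized by the single droop (or a short local modification near $(r,s)$) that turns the $\begin{tikzpicture}[scale=.5][x=1.5em, y=1.5em]\draw[color=black, thick](0,1)rectangle(1,2);\draw[thick, rounded corners,ForestGreen] (.5,1)--(.5,1.5)--(1,1.5);\end{tikzpicture}$ tile of $P_v$ at row $r$ into a configuration making $(r,s)$ blank.

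The key steps, in order, are: (1) Establish the Rothe-diagram identity $D_w = D_v \sqcup \{(r,s)\}$ and describe precisely the local picture of $P_v$ and $P_w$ in rows $\geq r$ near columns near $s$; identify the elementary move (a droop, or its inverse) connecting $P_w\!\mid$ to $P_v\!\mid$ together with the insertion of a new blank at $(r,s)$. (2) For $u \in \Phi(v,r)$, where $u = vt_{ir}$ with $i \in I(v,r)$, set up the corresponding comparison: the permutation $u$ differs from $w$ by moving which pipe occupies row $r$, and I would show $P_u$ maps to a bumpless pipe dream in $\bpd{w}$ with the \emph{same} diagram, via a droop that reroutes exactly the pipe passing through row $i$ and the pipe through row $r$. (3) Check that droops commute appropriately with $\Psi$, i.e. that a droop available on $P \in \bpd v$ (resp. $\bpd u$) corresponds to a droop available on $\Psi(P)$, so that $\Psi$ is well-defined on all of $\bpd v \cup \bigcup \bpd u$ and not just on the Rothe elements; this is where I would lean on the observation, already noted in the excerpt, that droops only touch cells weakly northwest of the Rothe diagram. (4) Construct the inverse map — given $Q \in \bpd w$, decide whether the tile at $(r,s)$ is blank; if so, "un-droop" to land in $\bpd v$, otherwise reroute the pipe through row $r$ to land in some $\bpd u$ — and check $\Psi \circ \Psi^{-1} = \id$ and $\Psi^{-1} \circ \Psi = \id$. (5) Finally, confirm the weight bookkeeping: $D(\Psi(P)) = D(P) \cup \{(r,s)\}$ on $\bpd v$ and $D(\Psi(P)) = D(P)$ otherwise, which, combined with $\wt$ being the product over the diagram of $(x_i - y_j)$, is exactly the combinatorial shadow of Theorem~\ref{t:LS_tree}: the factor $(x_r - y_s)$ comes from the new blank at $(r,s)$, and the $\bpd u$ summands contribute unchanged.

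The main obstacle I expect is Step (2) and the case analysis underlying the inverse map in Step (4): when the tile at $(r,s)$ of $Q \in \bpd w$ is not blank, one must canonically recover which $u \in \Phi(v,r)$ it came from and exhibit the reverse droop, and it is not a priori obvious that this is well-defined — that the pipe through row $r$ in $Q$ bends in a way that uniquely determines the transposition $t_{ir}$ and that the resulting diagram is genuinely $D(Q)$ with no stray changes. Handling the boundary/degenerate cases (when $i$ is very close to $r$, when the relevant rectangle contains forced bends, when $s$ is near the edge of the grid) will require care. A secondary subtlety is making sure the construction is insensitive to the completion $\widetilde w$ when $w$ is only a partial permutation, though the remark following Proposition~\ref{p:droop} essentially reduces this to the honest-permutation case. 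I would organize the write-up around a single "local move" lemma describing the picture near $(r,s)$, and then derive both $\Psi$ and $\Psi^{-1}$ from it, so that bijectivity becomes nearly automatic once the local move is shown to be an involution-like correspondence.
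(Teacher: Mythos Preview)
The paper's own proof is a one-line citation: it invokes \cite[Proposition~5.2]{Weigandt}, which gives a transition bijection for (possibly non-reduced) bumpless pipe dreams, and restricts to the reduced case. Your plan is thus far more ambitious --- you are proposing to reconstruct that bijection from scratch rather than cite it.

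Your overall architecture (a local move near $(r,s)$, then an inverse built by case analysis on the tile at $(r,s)$) is the right shape, and is indeed how the cited bijection works. But two aspects of your sketch would need to change. First, the moves connecting $P_v$ or $P_u$ to an element of $\bpd{w}$ are \emph{not} droops: droops preserve the underlying permutation, so no droop can take you from $\bpd{v}$ into $\bpd{w}$. What you need is a different local operation --- in the $v$-case it replaces the $\begin{tikzpicture}[scale=.5]\draw[thick](0,1)rectangle(1,2);\draw[thick,rounded corners](.5,1)--(.5,1.5)--(1,1.5);\end{tikzpicture}$ at $(r,s)$ by a blank and adjusts the pipe exiting row $r$; in the $u$-case it swaps two pipe elbows to change which pipe occupies row $r$. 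These are permutation-changing moves, and calling them droops obscures what must actually be checked.

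Second, and more seriously, the ``transport the droop sequence'' strategy in your Step~(3) is fragile. A droop available in $P \in \bpd{v}$ need not correspond to a droop in $\Psi(P)$, because inserting the blank at $(r,s)$ can alter which rectangles contain a unique bend. The robust approach --- and the one actually used in \cite{Weigandt} and in \cite{Lam.Lee.Shimozono} --- is to define $\Psi$ \emph{directly} on each bumpless pipe dream by a local rule applied at $(r,s)$, with no reference to a droop sequence back to the Rothe element. Then bijectivity follows from exhibiting the inverse local rule, exactly as you outline in Step~(4), and the diagram condition is immediate from the locality. Your Step~(4) case analysis is the real content; Steps~(1)--(3) as written would be replaced by a single ``local move'' lemma of the kind you mention at the end.
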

\begin{proof}
This follows by restricting the bijection in \cite[Proposition~5.2]{Weigandt} to reduced bumpless pipe dreams.
\end{proof}

Continuing our running example $w = 42153$, the lexicographically largest inversion is $(r,w^{-1}(s)) = (4,5)$, so we have $v = w t_{45} = 42135$. Since 
\[
\Phi(v,4) = \{ u^{(1)} = 43125, u^{(2)} = 42315 \},
\]
Lemma~\ref{lemma:diagramtransition} claims a bijection between $\bpd{w}$ and the unions of $\bpd{u^{(1)}}$, $\bpd{u^{(2)}}$, and $\bpd{v}$. Indeed, in this case, each of these three permutations $u^{(1)}, u^{(2)}, v$ is dominant and has a unique bumpless pipe dream:

\begin{equation}\label{eq:trans_ex}
 v \colon 
  \begin{tikzpicture}[x=1.5em,y=1.5em]
  \draw[step=1,gray, thin] (0,1) grid (5,6);
      \draw[color=black, thick](0,1)rectangle(5,6);
     \draw[thick,rounded corners,ForestGreen] (5,5.5)--(3.5,5.5)--(3.5,1);
     \draw[thick,rounded corners,ForestGreen] (5,4.5)--(1.5,4.5)--(1.5,1);
     \draw[thick,rounded corners,ForestGreen] (5,3.5)--(.5,3.5)--(.5,1);
     \draw[thick,rounded corners,ForestGreen] (5,2.5)--(2.5,2.5)--(2.5,1);
     \draw[thick,rounded corners,ForestGreen] (5,1.5)--(4.5,1.5)--(4.5,1);
     \end{tikzpicture}
\hspace{2em}
u^{(1)} \colon
  \begin{tikzpicture}[x=1.5em,y=1.5em]
    \draw[step=1,gray, thin] (0,1) grid (5,6);
      \draw[color=black, thick](0,1)rectangle(5,6);
     \draw[thick,rounded corners,ForestGreen] (5,5.5)--(3.5,5.5)--(3.5,1);
     \draw[thick,rounded corners,ForestGreen] (5,4.5)--(2.5,4.5)--(2.5,1);
     \draw[thick,rounded corners,ForestGreen] (5,3.5)--(.5,3.5)--(.5,1);
     \draw[thick,rounded corners,ForestGreen] (5,2.5)--(1.5,2.5)--(1.5,1);
     \draw[thick,rounded corners,ForestGreen] (5,1.5)--(4.5,1.5)--(4.5,1);
     \end{tikzpicture}
\hspace{2em}
u^{(2)} \colon
  \begin{tikzpicture}[x=1.5em,y=1.5em]
    \draw[step=1,gray, thin] (0,1) grid (5,6);
      \draw[color=black, thick](0,1)rectangle(5,6);
     \draw[thick,rounded corners,ForestGreen] (5,5.5)--(3.5,5.5)--(3.5,1);
     \draw[thick,rounded corners,ForestGreen] (5,4.5)--(1.5,4.5)--(1.5,1);
     \draw[thick,rounded corners,ForestGreen] (5,3.5)--(2.5,3.5)--(2.5,1);
     \draw[thick,rounded corners,ForestGreen] (5,2.5)--(.5,2.5)--(0.5,1);
     \draw[thick,rounded corners,ForestGreen] (5,1.5)--(4.5,1.5)--(4.5,1);
     \end{tikzpicture}.
\end{equation}
The diagram of the first bumpless pipe dream of \eqref{eq:bpd_ex} consists of the diagram of the bumpless pipe dream for $v$ together with the cell $(r,s)=(4,3)$. The diagram of the second bumpless pipe dream of \eqref{eq:bpd_ex} is that of $u^{(2)}$, while the diagram of the third is that of $u^{(1)}$.

We will use a diagrammatic interpretation of transition, described by Knutson and Yong in~\cite[Section 2]{Knutson.Yong}.
For $w \in S_n$, the \textbf{maximal corner} of $w$ is the lexicographically maximal cell $(r,s)$ in $D_w$.
Amongst the $\bullet$'s in $D_w$ that are northwest of the maximal corner, we call the ones that are maximally southeast \textbf{pivots}.
For $(i,j)$ a pivot of $w$, the \textbf{marching operation} is a two-step procedure on $D_w$.
First remove the lines emanating from the $\bullet$ at $(i,j)$.
Next, for every cell in $D_w$ in the rectangle with corners $(i,j)$ and $(r,s)$, move that cell strictly to the northwest in the unique way such that each cell fills a position vacated either by the removed lines or by another cell.
The resulting diagram  is $D_u$ for some $u \in S_n$, and we say $w \xrightarrow{i} u$.
The following lemma is implicit in~\cite[Section 2]{Knutson.Yong}.
\begin{lemma}
\label{l:marching}
Let $w \in S_n$ with maximal corner $(r,s)$ and $v = wt_{rw^{-1}(s)}$.
Then the pivots of $w$ are $\{(i,w(i)): i \in I(v,r)\}$ and
\[
\Phi(v,r) = \{u^{(i)}: w \xrightarrow{i} u^{(i)} \ \mbox{for}\ i \in I(v,r)\}.
\]
\end{lemma}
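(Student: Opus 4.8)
\textbf{Proof proposal for Lemma~\ref{l:marching}.}

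The plan is to identify the combinatorial marching operation of Knutson--Yong with the algebraic transition data $\Phi(v,r)$ by a direct bijective argument on Rothe diagrams, using the characterization of Bruhat covers in terms of rank functions. First I would translate the hypothesis: the maximal corner $(r,s)$ of $D_w$ is, by definition of the Rothe diagram, the lexicographically largest inversion of $w$ in the sense of Theorem~\ref{t:LS_tree} (a cell $(i,j)\in D_w$ records exactly the inversion between rows $i$ and $w^{-1}(j)$), so $v = wt_{rw^{-1}(s)}$ is precisely the permutation to which transition is applied. By Theorem~\ref{t:LS_tree} we have $v \lessdot w$, and $D_v$ is obtained from $D_w$ by deleting the cell $(r,s)$ and then repairing the diagram; concretely, the $\bullet$ at $(r,w(r))=(r,s')$ for some $s' > s$ and the $\bullet$ at $(w^{-1}(s),s)$ get swapped to $(r,s)$ and $(w^{-1}(s),s')$, which is exactly the first step of the marching operation's bookkeeping but with the southeast pivot replaced by the maximal corner's own dot.

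The heart of the argument is the identification of pivots with $I(v,r)$. For $i < r$, I would show that $(i,w(i))$ is a pivot of $w$ if and only if $i \in I(v,r)$, i.e. if and only if $v \lessdot vt_{ir}$. Unwinding definitions: $(i,w(i))$ being a $\bullet$ northwest of $(r,s)$ that is maximally southeast among such $\bullet$'s means $w(i) < s$, and there is no $\bullet$ at any $(i',w(i'))$ with $i < i' \le r$ and $w(i) < w(i') < s$; equivalently $w(i) < s$ and $w$ has no value strictly between $w(i)$ and $s$ occurring in a row strictly between $i$ and $r$. This is precisely the standard criterion for $vt_{ir}$ (equivalently $wt_{ir}t_{rw^{-1}(s)}$, after the transition swap) to cover $v$: the transposition $t_{ir}$ raises length by exactly one iff no index between $i$ and $r$ has a $v$-value strictly between $v(i)$ and $v(r)$, and one checks $v(i) = w(i)$, $v(r) = s$. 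I would verify this translation carefully using $r_v$ versus $r_w$, since this is the step where an off-by-one in the rank function or a misidentification of which dot moved is easiest to commit.

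Finally, for each pivot $(i,w(i))$ I would check that the Rothe diagram produced by the marching operation $w \xrightarrow{i} u^{(i)}$ is $D_{u^{(i)}}$ where $u^{(i)} = vt_{ir}$. The marching operation removes the hooks from the $\bullet$ at $(i,w(i))$ and slides every cell of $D_w$ inside the rectangle $[i,r]\times[w(i),s]$ northwest into vacated positions; I claim this has the net effect on the underlying permutation of composing the "delete $(r,s)$" move (which produces $v$ from $w$) with the transposition $t_{ir}$ acting on $v$, because sliding the cells northwest corresponds exactly to cyclically shifting the relevant $\bullet$'s, and the set of rows/columns involved is determined by the pivot condition to be precisely $\{i, r\}$ for the dot-level change together with the interior cells which just fill in. Comparing rank functions $r_{u^{(i)}}(a,b)$ cell-by-cell against the marched diagram, using that outside the rectangle nothing changes and inside the change is the prescribed shift, completes the identification. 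I expect the main obstacle to be the bookkeeping in this last step: making precise that "slide everything northwest" on diagram cells is the same permutation-level operation as $v \mapsto vt_{ir}$, as opposed to some other permutation with the same code; the cleanest route is probably to argue that both diagrams have the same essential set and rank conditions there, or to invoke that a permutation is determined by its Rothe diagram and match the diagrams directly.
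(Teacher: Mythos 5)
The paper does not actually prove this lemma: it is stated as ``implicit in [Knutson--Yong, Section~2]'' and used as a black box, so your direct verification is filling a gap rather than paralleling an argument in the text. Your outline is correct and is the natural way to do it. The middle step --- matching pivots with $I(v,r)$ via the cover criterion ``$v \lessdot vt_{ir}$ iff $v(i)<v(r)$ and no $i<i'<r$ has $v(i)<v(i')<v(r)$,'' using $v(r)=s$ and $v(i')=w(i')$ for $i'<r$ --- is exactly right and is the real content. Two places deserve more care than your sketch gives them. First, the identification of the maximal corner with the lexicographically largest inversion is not quite ``by definition'': the maximal corner maximizes the \emph{value} $s=w(j)$ over inversions $(r,j)$, while the lexicographic rule maximizes the \emph{position} $j$; these agree only because maximality of $r$ forces $w$ to be increasing on $\{j>r : w(j)<w(r)\}$ (otherwise a later row of $D_w$ would be nonempty), and that one-line observation should appear. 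Second, the final step --- that marching at pivot $i$ produces precisely $D_{vt_{ir}}$ --- is asserted rather than carried out; your fallback of matching Rothe diagrams directly (permutations being determined by their diagrams) works, and in the block-predominant case the paper's own Lemma~\ref{lem:transition_essential} records exactly the resulting diagram, but for general $w$ this bookkeeping is the whole point of Knutson--Yong's argument and cannot be waved through. With those two points written out, your proof is complete and self-contained, which is arguably an improvement over the paper's bare citation.
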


\subsection{Gr\"obner bases}

Recall $R=\mathbb C[Z]$.  A {\bf monomial order} is a linear ordering on monomials in $R$ such that, for any monomials ${\sf m}$, ${\sf n},$ and ${\sf p}$, we have
\begin{itemize}
\item  ${\sf m}<{\sf n}$ if and only if ${\sf m p}<{\sf np}$ and
\item  ${\sf m}\leq{\sf mp}$.  
\end{itemize}

Fix a monomial order on $R$.  
Given $f\in R$ its {\bf initial term} $\init{f}$ is the term whose monomial is largest with respect to the order.  
For a set of polynomials $F$, we define $\init{F}=\{\init{f}:f\in F\}$.  
If $F$ is an ideal, then $\init{F}$ is called the {\bf initial ideal} of $F$.  If $X=\Spec(R/I)$, the {\bf initial scheme} $\init{X}$ is $\Spec(R/\init{I})$.

  A {\bf diagonal} term order on R is a monomial order so that the initial term of any minor of $Z$ is the product of the entries on its main diagonal.  
  An {\bf antidiagonal} term order is a monomial order so that the initial term of any minor of $Z$ is the product of the entries on its main antidiagonal.

A {\bf Gr\"obner basis} of an ideal $I$ is a subset $G$ such that $\init{G}=\init{I}$.  
If $G$ is a Gr\"obner basis for $I$, then $I=\langle G\rangle$.  
Moreover, every ideal $I\subseteq R$ admits a finite Gr\"obner basis.  A Gr\"obner basis for a diagonal (resp.\ antidiagonal) term order is called a {\bf diagonal} (resp.\ {\bf antidiagonal}) Gr\"obner basis.  A subset $G$ of an ideal $I$ is a {\bf universal} Gr\"obner basis if it is a Gr\"obner basis for $I$ with respect to all monomial orders.

\subsection{Equivariant cohomology}

We need some basic notions of equivariant cohomology. Although in general, equivariant cohomology can be quite complicated, in our setting it is easy to describe axiomatically.  We will recall the properties that we will use. For an elementary but more thorough introduction to equivariant cohomology, see \cite{Macpherson}.

Consider the algebraic torus $T \subset \GL_n(\mathbb{C})$ of invertible diagonal matrices and its Lie algebra $\mathfrak{t}$ of all $n \times n$ diagonal matrices. There is a natural left action of $T \times T$ on $\Spec R$ given by scaling rows and columns separately:
\[
(t, \tau) \cdot M = t M \tau^{-1}.
\]

Now, $\Spec R$ has a $(T \times T)$-equivariant cohomology ring $H_{T \times T}(\Spec R)$. Since $\Spec R$ is contractible, we have from the definition of equivariant cohomology that 
\[
H_{T \times T}(\Spec R) \cong H_{T \times T}({\rm pt}) \cong \mathcal{O}(\mathfrak{t} \otimes \mathfrak{t}) \cong \mathbb{Z}[x_1, \dots, x_n, y_1, \dots, y_n].
\]
Every setwise-stable subscheme $X \subseteq \Spec R$ has an equivariant class $[X]_{T \times T}$, which under the above correspondence, we may identify with an integral polynomial in $2n$ variables. For $\mathcal B \subset [n] \times [n]$, let $C_ \mathcal B$ be the coordinate subspace $\Spec(R / \langle z_{ij} : (i,j) \notin  \mathcal B \rangle)$. 

For our purposes, it is enough to note that equivariant classes in $H_{T \times T}(\Spec R)$ satisfy the following three properties:

\noindent \textbf{Normalization:} For any coordinate subspace $C_{\mathcal{B}}$, we have
\[
[C_{\mathcal B}]_{T \times T}=\prod_{(i,j)\in \mathcal B} (x_i-y_j).
\]
\noindent \textbf{Additivity:}
For any $X \subseteq \Spec R$,
\[
[X]_{T \times T}=\sum_{j} {\rm mult}_{X_j}(X)\,[X_j]_{T \times T},
\]
where the sum is over the top-dimensional components of $X$ and ${\rm mult}_Y( X)$ denotes the multiplicity of $X$ along the reduced irreducible variety $Y$.
In particular, if $X=\bigcup_{i=1}^m X_i$, is a reduced scheme, then
\[
[X]_{T \times T}=\sum_{j}\ [X_j]_{T \times T},
\]
summing over components $X_j$ with $\dim X_j = \dim X$.

\noindent \textbf{Degeneration:}
If $\init{X}$ is a Gr\"obner degeneration of $X$ with respect to any term order, then \[
[X]_{T \times T}= [\init{X}]_{T \times T}.
\]

For any $X$ and any term order, we have by definition that $\init{X}$ is cut out of $\Spec R$ by a monomial ideal. Hence, $\init{X}$ is always a (schemy) union of coordinate subspaces, and its equivariant class may be computed by Additivity and Normalization. Thus, the equivariant class of any $X \subseteq \Spec R$ may be computed from these three properties, given a sufficiently good description of the initial scheme $\init{X}$.

One of the key results of \cite{Knutson.Miller} is the following.

\begin{theorem}[{\cite[Theorem~A]{Knutson.Miller}}]
\label{t:KM}
	For any permutation $w$, the matrix Schubert variety $X_w$ satisfies
	\[
	[X_w]_{T \times T}=\Schub_w(\xx;\mathbf{y}).
		\]
\end{theorem}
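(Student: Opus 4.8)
The statement to prove is Theorem~\ref{t:KM}: for any permutation $w$, $[X_w]_{T\times T} = \Schub_w(\xx;\mathbf y)$.

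\bigskip

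\noindent\textbf{Proof proposal.}

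The plan is to induct on the number of inversions of $w$ (equivalently, on $\dim X_w$'s codimension, or on Bruhat order), using the equivariant transition recurrence of Theorem~\ref{t:LS_tree} on the combinatorial side and matching it with a geometric recurrence for the classes $[X_w]_{T\times T}$. The base case is $w = \id$ (or, slightly more generally, $w$ the longest permutation in the relevant sense), where $X_w$ is all of $\Spec R$, the class is $1$, and $\Schub_{\id} = 1$ by Theorem~\ref{thm:BPDformula} since $\bpd{\id}$ consists of the single pipe dream with empty diagram. More precisely, dominant permutations form a convenient base: when $w$ is dominant, $X_w = C_{D_w}$ is the coordinate subspace cut out by the variables indexed by the Rothe diagram, so by Normalization $[X_w]_{T\times T} = \prod_{(i,j)\in D_w}(x_i - y_j)$, which equals $\Schub_w$ since a dominant permutation has a unique bumpless pipe dream, namely $P_w$, with $D(P_w) = D_w$.

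For the inductive step, let $w$ be non-dominant with lexicographically largest inversion $(r, w^{-1}(s))$, set $v = wt_{r w^{-1}(s)}$, and recall from Theorem~\ref{t:LS_tree} that $\Schub_w = (x_r - y_s)\Schub_v + \sum_{u\in\Phi(v,r)}\Schub_u$, where all of $v$ and the $u \in \Phi(v,r)$ have strictly fewer inversions than $w$, so the inductive hypothesis applies to each. The key geometric input I would establish is a degeneration of $X_v$ (or of a related variety) whose components are $X_w$ and the $X_u$, arranged so that the Additivity and Degeneration axioms produce exactly the transition identity. Concretely: one degenerates $X_v$ by a one-parameter subgroup adapted to the essential-set condition being ``pushed'' at the maximal corner $(r,s)$; the flat limit is a reduced union whose top-dimensional components are $X_w$ (appearing, say, inside the hyperplane $z_{rs} = 0$, contributing the factor $(x_r - y_s)$ via Normalization applied to $C_{\{(r,s)\}}$ intersected appropriately — this is where the marching/pivot description of Lemma~\ref{l:marching} enters, identifying the other components as exactly the $X_u$ for $u \in \Phi(v,r)$). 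Then
\[
[X_v]_{T\times T} = [\init{X_v}]_{T\times T} = \sum_j [\text{component}_j]_{T\times T},
\]
and after isolating the component structure this rearranges to $(x_r - y_s)[X_w]_{T\times T} + \sum_{u}[X_u]_{T\times T} = [X_v \text{-degeneration}]$; combined with $[X_v]_{T\times T} = \Schub_v$ (by induction) one solves for $[X_w]_{T\times T}$ and matches $\Schub_w$. Alternatively, and perhaps more cleanly, one can run the induction the other direction: degenerate $X_w$ itself and use that the generic point of $X_w$ lies on a line, but the transition recurrence as stated expresses $\Schub_w$ in terms of \emph{smaller} permutations, so the natural move is to verify the geometric recurrence $[X_v]_{T\times T} = (x_r - y_s)[X_w]_{T\times T} + \sum_u [X_u]_{T\times T}$ directly and invoke the inductive hypothesis on $v$ and the $u$'s, deducing the formula for $w$ since $w \lessdot v$... wait, one must be careful about the direction of Bruhat order here. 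Since $v \lessdot w$ (Theorem~\ref{t:LS_tree} says $v < w$), the induction should be on inversions of $w$ \emph{decreasing} toward dominant, with $\Schub_w$ expressed via the \emph{fewer}-inversion classes, exactly as stated; so the geometric recurrence to prove is precisely $\Schub_v$ and the $\Schub_u$ known $\Rightarrow$ $[X_w]_{T\times T}$ determined, via a degeneration of $X_v$.

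The main obstacle is establishing that the geometric/degeneration recurrence for the classes $[X_w]_{T\times T}$ exactly mirrors the transition formula — that is, identifying the flat limit of the relevant one-parameter degeneration, proving it is generically reduced (so Additivity applies without spurious multiplicities), and checking that its top-dimensional components are precisely $C_{\{(r,s)\}}\cap X_w$ together with the $X_u$, $u\in\Phi(v,r)$, with the correct hyperplane factor. This is essentially a localized determinantal-ideal computation near the essential-set cell at the maximal corner; the combinatorial bookkeeping is governed by Lemma~\ref{l:marching}, but the scheme-theoretic content (reducedness of the limit, no embedded or lower-dimensional components interfering with the equivariant class, dimension count showing each $X_u$ and $C_{\{(r,s)\}}\cap X_w$ has the same dimension as $X_w$ which is one less than $\dim X_v$) requires genuine work. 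Once that recurrence is in hand, the theorem follows by a routine induction, since both $[X_\bullet]_{T\times T}$ and $\Schub_\bullet$ satisfy the same recurrence and the same (dominant) base case.
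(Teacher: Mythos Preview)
The paper does not prove Theorem~\ref{t:KM}; it is quoted verbatim as \cite[Theorem~A]{Knutson.Miller} and used as a black box. So there is no ``paper's own proof'' to compare your proposal against.

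That said, a few remarks on your proposal itself. The strategy you outline --- prove a geometric recurrence for $[X_w]_{T\times T}$ that mirrors the transition formula of Theorem~\ref{t:LS_tree}, then induct --- is a legitimate route to the result, and is in spirit closer to Knutson's later cotransition argument \cite{Knutson:cotransition} (mentioned in the introduction) than to the original proof in \cite{Knutson.Miller}, which instead goes through the antidiagonal Gr\"obner degeneration and ordinary pipe dreams. Your proposal, however, is not a proof: the entire content of the theorem is precisely the ``main obstacle'' you name at the end, namely producing a flat $(T\times T)$-equivariant degeneration whose limit has, as top-dimensional components with multiplicity one, exactly $X_w\cap\{z_{rs}=0\}$ and the $X_u$ for $u\in\Phi(v,r)$. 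You have not indicated how to construct such a degeneration or why its limit is generically reduced with those components; Lemma~\ref{l:marching} is purely combinatorial and does not supply this. Your paragraph also wobbles on the direction of the induction (the ``wait'' aside): since $v\lessdot w$ and each $u\in\Phi(v,r)$ satisfies $\ell(u)=\ell(w)$, transition as stated does \emph{not} directly reduce $\Schub_w$ to permutations of strictly smaller length; one must be careful to set up the induction (e.g.\ on the position of the maximal corner, as the present paper does in Section~\ref{s:proof}) so that $v$ and all $u^{(i)}$ are genuinely earlier cases.

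In short: the paper takes Theorem~\ref{t:KM} on faith from \cite{Knutson.Miller}, and your sketch, while pointing in a reasonable direction, leaves the substantive geometric step entirely open.
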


\section{CDG generators}
\label{s:cdg}

For $\lambda$ an integer partition, let $Z^\lambda$ be the matrix obtained from $Z=(z_{ij})$ by specializing $z_{ij}$ to $0$ for all $(i,j)\in \lambda$. 
The {\bf dominant part} of the Rothe diagram $D_w$ is the set 
\[
{\rm Dom}(w)=\{(i,j)\in D_w:r_w(i,j)=0\}.
\]
 The cells of ${\rm Dom}(w)$ make up the Young diagram of a partition $\lambda$ and we identify ${\rm Dom}(w)$ with this partition.
Define $\Esshat(w) \coloneqq \Ess(w) - \Dom(w)$.
For example, with $w = 42153$ we have ${\rm Dom}(w)  = \{ (1,1), (1,2), (1,3), (2,1) \}$, which we identify with the partition $(3,1)$.  Furthermore,   $\Ess'(w) = \{(4,3)\}$ and
\[
Z^{(3,1)}_{[4],[3]} = \begin{bmatrix}
	0 & 0 & 0 \\
	0 & z_{22} & z_{23} \\
	z_{31} & z_{32} & z_{33} \\
	z_{41} & z_{42} & z_{43}  
\end{bmatrix}.
\]

Let \[
G'_w = \bigcup_{(i,j)} \left\{ \text{minors of size $r_w(i,j)+1$ in } Z^{{\rm \Dom}(w)}_{[i],[j]} \right\},
\]
where the union is over cells $(i,j) \in \Esshat(w)$.
Then $I_w$ is generated by 
\[
G_w = G'_w \cup \{ z_{ij} : (i,j) \in \Dom(w) \}. 
\]
We call this set $G_w$ of polynomials the {\bf CDG generators} of the Schubert determinantal ideal $I_w$ (after the authors of \cite{Conca.DeNegri.Gorla} who studied similar generators in a related context). 
We are interested in when $G_w$ is a diagonal Gr\"obner basis for $I_w$; in this case, we say that $w$ and $I_w$ are {\bf CDG}. 
Note that if $w$ is CDG, then $\init{I_w}$ is reduced, since the initial terms of the polynomials in $G_w$ are all squarefree.

\begin{example}
Let $w = 42153$. Then the CDG generators of $I_w$ are
\begin{equation}\label{eq:CDG_ex}
z_{11}, z_{12}, z_{13}, z_{21},  z_{22}z_{33}z_{41} + z_{23}z_{31}z_{42} - z_{22}z_{31}z_{43} - z_{23}z_{32}z_{41}.
\end{equation}
Notice that this generating set is much smaller than the corresponding set of Fulton generators from~\eqref{eq:Fulton_ex}.
\end{example}

In $S_4$, one can check that all permutations are CDG.  In $S_5$, $13254$ and $21543$ are the only permutations which are not CDG.  Notice in particular that ${\rm Dom}(13254)=\emptyset$, so the CDG generators are simply the Fulton generators in this case.

We now observe a special class of permutations that are CDG, and indeed whose CDG generators are universal Gr\"obner.

\begin{proposition}
\label{prop:universalgb}
Fix $w\in  S_n$ so that there is a unique $(e_1,e_2)\in \Esshat(w)$.  
Furthermore, assume that $r_w(e_1,e_2)=\min\{e_1,e_2\}-1$.  
Then $G_w$ is a universal Gr\"obner basis for the Schubert determinantal ideal $I_w$.
\end{proposition}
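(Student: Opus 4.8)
The plan is to reduce everything to a single, well-understood determinantal ideal. Write $\lambda = \Dom(w)$ and let $(e_1,e_2) = (e,e)$-style corner with $p := \min\{e_1,e_2\}$, so that $G'_w$ consists of the maximal ($p \times p$) minors of the $e_1 \times e_2$ matrix $Z^\lambda_{[e_1],[e_2]}$, together with the variables $z_{ij}$ for $(i,j)\in\lambda$. Since the hypothesis $r_w(e_1,e_2) = p-1$ makes these the \emph{maximal} minors, the key classical input is that the maximal minors of a generic matrix form a universal Gr\"obner basis (Sturmfels--Zelevinsky; see also \cite{Sturmfels, Bruns.Conca}). First I would recall precisely this fact and explain why it survives the specialization $z_{ij}\mapsto 0$ for $(i,j)\in\lambda$: the dominant cells $\lambda$ form a Young diagram in the northwest corner, so each column of $Z^\lambda_{[e_1],[e_2]}$ has its zeros occupying an initial segment of rows; deleting those zero rows/columns from each relevant minor, one sees that the nonzero maximal minors of $Z^\lambda_{[e_1],[e_2]}$ are exactly the maximal minors of a genuinely generic submatrix (padded by the zero entries, which contribute nothing). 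Thus $G'_w$ is, up to the vanishing variables, a set of maximal minors of a generic matrix.

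Next I would handle the interaction between $G'_w$ and the linear generators $\{z_{ij}:(i,j)\in\lambda\}$. The cleanest route is: working in $R = \mathbb{C}[Z]$, the ideal $\langle z_{ij}:(i,j)\in\lambda\rangle$ is generated by a subset of the variables, and $R/\langle z_{ij}:(i,j)\in\lambda\rangle$ is again a polynomial ring $R'$ in the surviving variables. Under this quotient, $G'_w$ maps to the maximal minors of a generic matrix in $R'$, which by the cited result are a universal Gr\"obner basis there. One then needs the elementary lemma that if $\mathfrak{m}$ is generated by a subset $S$ of the variables and $\bar G$ is a (universal) Gr\"obner basis of an ideal $\bar I \subseteq R/\langle S\rangle = R'$, then $S \cup G$ is a (universal) Gr\"obner basis of the preimage ideal, for any term order on $R$ that is compatible — which every term order is, since linear monomials $z_{ij}$ are their own initial terms and Buchberger's criterion for the cross $S$-pairs is automatic (an $S$-polynomial of $z_{ij}$ with any $g$ reduces by simply substituting $z_{ij}=0$). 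I would state and quickly verify this lemma. Combining the two steps, $G_w = G'_w \cup \{z_{ij}:(i,j)\in\lambda\}$ is a universal Gr\"obner basis for $\langle G_w\rangle$; and since we are told $G_w$ generates $I_w$, we conclude $G_w$ is a universal Gr\"obner basis for $I_w$.

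The one genuine subtlety — and the step I expect to be the main obstacle to write cleanly — is the passage from ``generic matrix'' to ``generic matrix with a Young-diagram's worth of northwest zeros.'' One must argue that setting those particular entries to zero does not accidentally make the maximal-minors-are-universal-Gr\"obner statement fail, e.g.\ by creating a smaller determinantal ideal whose generators are no longer the full maximal minors. This is where the shape of $\lambda$ (a partition, hence the zero set is closed under going up and left) is essential: it guarantees that in $Z^\lambda_{[e_1],[e_2]}$ every column's support is a final segment of $\{1,\dots,e_1\}$, so each $p\times p$ minor is either identically zero (if it tries to use a row forced to vanish across a committed set of columns) or equals a maximal minor of the honestly generic rectangular array obtained by ignoring the dead rows. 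I would make this precise with a short combinatorial observation about which $p\times p$ minors of $Z^\lambda_{[e_1],[e_2]}$ are nonzero, and note that these are in bijection with (and equal to) the maximal minors of a generic $(e_1 - \lambda'_{e_2}) \times e_2$-type array. Everything else is then formal.
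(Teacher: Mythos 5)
There is a genuine gap, and it sits exactly where you suspected. Your reduction claims that each nonzero maximal minor of $Z^{\Dom(w)}_{[e_1],[e_2]}$ ``equals a maximal minor of the honestly generic rectangular array obtained by ignoring the dead rows.'' This is false in general, and the paper's own running example $w=42153$ is a counterexample: here $\Dom(w)=(3,1)$ and the unique nonzero maximal minor of $Z^{(3,1)}_{[4],[3]}$ is the $3\times 3$ minor on rows $\{2,3,4\}$, whose submatrix still contains the forced zero at position $(2,1)$. Because the zeros form a staircase (Young diagram) rather than a union of entirely zero rows or columns, you cannot delete rows/columns to land on a fully generic matrix; the nonzero maximal minors are maximal minors of a \emph{sparse} generic matrix, not of a generic one. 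Consequently the Sturmfels--Zelevinsky theorem does not apply directly, and the specialization genuinely changes the Gr\"obner-theoretic behavior: e.g.\ under a diagonal order the leading term of the generic minor may use a zeroed entry, so the leading term of the specialized minor is a different monomial, and the universal Gr\"obner basis property does not automatically survive the substitution $z_{ij}\mapsto 0$. The correct key input, and the one the paper uses, is Conca--De Negri--Gorla \cite[Theorem~4.2]{Conca.DeNegri.Gorla}, which establishes that the maximal minors of precisely such sparse generic matrices (entries are distinct variables or zero) form a universal Gr\"obner basis for the ideal they generate.

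The second half of your argument --- that adjoining the variables $\{z_{ij}:(i,j)\in\Dom(w)\}$, which are disjoint from the variables appearing in $G'_w$, preserves the universal Gr\"obner basis property --- is correct and is essentially the paper's second sentence (via Buchberger's criterion for coprime leading terms). But without replacing the generic-matrix citation by the sparse-matrix result, the first and essential step of the proof does not go through.
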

\begin{proof}
Fix a monomial order on $R$.
It follows from \cite[Theorem~4.2]{Conca.DeNegri.Gorla} that the maximal minors of $Z^{{\rm Dom}(w)}_{[e_1],[e_2]}$ are a universal Gr\"obner basis for the ideal they generate.  
Since elements of $\{z_{ij}:(i,j)\in \Dom(w)\}$ and $G_w'$ share no variables, concatenating these two sets produces a universal Gr\"obner basis for $I_w$.
\end{proof}

\begin{example}
Continuing our running example, let $w=42153$.  Since $|\Esshat(w)|=\{(4,3)\}$ and $r_w(4,3)=2$, the generators in Equation~(\ref{eq:CDG_ex}) are a universal Gr\"obner basis for $I_w$ by Proposition~\ref{prop:universalgb}.
\end{example}

In the remainder of the paper, all term orders are assumed to be diagonal, unless otherwise specified.

\section{Block sum construction}
\label{s:block}

In this section, we define a construction that builds a partial permutation out of two partial permutations.
Its existence is encoded in the following lemma.
\begin{lemma}
\label{l:block}
Let $u$ and $v$ be partial permutations.
There is a unique $w \in S_\infty$ so that
\[
D(w) = \ytableausetup{boxsize=3.0em}
\begin{ytableau}
\text{\tiny{all boxes}}  & D(v)\\
 D(u)
 \end{ytableau}.
\]
\end{lemma}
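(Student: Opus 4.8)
The claim is that, given two partial permutations $u$ (of some size $a \times b$) and $v$ (of some size $c \times d$), there is a unique $w \in S_\infty$ whose Rothe diagram has the displayed block form: a full rectangular block of boxes in the northwest, a copy of $D(v)$ to its northeast, a copy of $D(u)$ to its southwest, and nothing (relevant) in the southeast. The plan is to exhibit $w$ explicitly, check that its Rothe diagram is the displayed shape, and then observe uniqueness is automatic because a permutation is determined by its Rothe diagram.

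\textbf{Construction of $w$.} First I would fix notation for the block sizes: say $u$ is $a \times b$ and $v$ is $c \times d$, so the ``all boxes'' rectangle has $c$ rows and $b$ columns, $D(v)$ occupies rows $1,\dots,c$ and columns $b+1,\dots,b+d$, and $D(u)$ occupies rows $c+1,\dots,c+a$ and columns $1,\dots,b$. The natural candidate is built from the completions $\widetilde u \in S_{\max\{a,b\}}$ and $\widetilde v \in S_{\max\{c,d\}}$: place $\widetilde v$'s permutation matrix shifted so its dots sit in the northeast region (rows $1..c$, columns $b+1..b+d$ suitably padded) and $\widetilde u$'s matrix in the southwest region (rows $c+1..$, columns $1..b$). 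More precisely, I would write down $w$ in one-line notation by declaring: the first $c$ values of $w$ are $b$ plus the one-line notation of $\widetilde v$ (so those dots land strictly east of column $b$), and the next block of values are the one-line notation of $\widetilde u$ (so those dots land in the first $b$ columns, strictly south of row $c$), then the remaining entries fill in the unique order-preserving way. One must be slightly careful padding with fixed points so that $w$ is genuinely a permutation of an initial segment; since we work in $S_\infty$ this is harmless.

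\textbf{Verifying the diagram and concluding.} Then I would compute $D(w)$ directly from the rank function / dot-and-hook picture. The key local checks: (i) every cell in rows $1..c$, columns $1..b$ is in $D_w$, because every dot in those rows lies strictly east of column $b$ and every dot in those columns lies strictly south of row $c$ — so no hook from a dot can kill such a cell; (ii) in the northeast region (rows $1..c$, columns $>b$) the dots are exactly the shifted dots of $\widetilde v$ and the hooks behave exactly as in $D_v$ restricted appropriately, because no dot from the $\widetilde u$-block lies in these rows and no dot lies in columns $b+1,\dots$ above row $c$ that would interfere — giving a translated copy of $D(v)$; (iii) symmetrically the southwest region reproduces $D(u)$; and (iv) the southeast region (rows $>c$, columns $>b$) contains no cells of $D_w$ because each such cell has the dot of row $>c$ (which lies in columns $\le b$) to its west... wait, that argues it could still be a diagram cell, so instead the point is that each dot in columns $>b$ sits in rows $\le c$, hence every column $>b$ has its dot in a row $\le c$, killing (via the vertical hook) any diagram cell in that column below row $c$ except those that survive in $D(v)$ — and one checks the survivors are exactly the rows $1..c$. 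Uniqueness then follows immediately: distinct permutations of $S_\infty$ have distinct Rothe diagrams, so at most one $w$ can have the given $D(w)$.

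\textbf{Main obstacle.} The only real subtlety — and the step I would be most careful about — is bookkeeping the completions and the padding: $u$ and $v$ are partial permutations, not permutations, and their completions $\widetilde u, \widetilde v$ may need to be taken in different-sized symmetric groups, so one has to confirm that after assembling the blocks the rows and columns of $w$ that carry ``no dot from $u$ or $v$'' get fixed-point values that neither create spurious diagram cells inside the three marked blocks nor are needed to describe them. Concretely, I expect to phrase the verification via the rank function $r_w$ and show $r_w(i,j)$ in each of the four regions matches what is forced by the block picture, which turns the whole argument into a finite case check on $(i,j)$ lying in one of the four regions.
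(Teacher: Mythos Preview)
Your plan diverges from the paper's proof, and the divergence is precisely where your argument has a gap. The paper does \emph{not} build $w$ from the completions $\widetilde u,\widetilde v$; it starts with the raw block matrix
\[
w^{(1)} = \left[\begin{array}{c|c} 0 & v\\ \hline u & 0 \end{array}\right]
\]
using the partial permutations themselves, computes that $D(w^{(1)})$ has the desired northwest, northeast, and southwest blocks but possibly some unwanted cells $D^{(1)}$ in the southeast, and then \emph{iteratively} inserts dots at lexicographically minimal positions of $D^{(1)}$ to kill those cells one row at a time. The existence argument is this finite clearing process; your proposal has no analogue of it.

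The specific gap is your step (iv). You assert that every column $>b$ carries its dot in some row $\le c$, but your construction does not guarantee this. If $a>b$ then $\widetilde u\in S_a$ takes values exceeding $b$, so rows $c+1,\dots,c+a$ already receive some dots in columns $>b$; and if $d>c$ then some columns among $b+1,\dots,b+d$ receive no dot from the first $c$ rows at all. In either case the vertical-hook argument fails and the southeast block can contain diagram cells. Concretely, take $u=\left[\begin{smallmatrix}1\\0\end{smallmatrix}\right]$ (so $a=2$, $b=1$, $D(u)=\emptyset$) and $v=[\,1\ 0\,]$ (so $c=1$, $d=2$, $D(v)=\emptyset$). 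The target diagram is the single box $\{(1,1)\}$, i.e.\ $w=21$. But your recipe gives $w(1)=b+\widetilde v(1)=2$ and then $w(2)=\widetilde u(1)=1$, $w(3)=\widetilde u(2)=2$, which is not even a permutation; and the naive block $\left[\begin{smallmatrix}0&\widetilde v\\\widetilde u&0\end{smallmatrix}\right]$ yields $3412$, whose diagram is a $2\times 2$ square, not a single box. Your ``Main obstacle'' paragraph correctly flags the padding issue, but the fix you suggest (checking the three marked blocks via $r_w$) does not touch the southeast, which is where the problem lives. The paper's iterative dot-insertion is exactly the missing ingredient.
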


\begin{proof}
If we can construct a partial permutation $w$ with $D(w)$ as desired, there is a unique way to extend this partial permutation to $\tilde{w} \in S_\infty$.
Since elements of $S_\infty$ are determined by their diagrams, this implies that such a $\tilde{w}$ is unique.

Viewing $u$ and $v$ as partial permutation matrices, let $w^{(1)} = \left[\begin{array}{c|c}
 0 & v\\
 \hline
 u & 0
 \end{array}\right]$.
 Note that
 \[
 D(w^{(1)}) =  \ytableausetup{boxsize=3.0em}
\begin{ytableau}
\text{\tiny{all boxes}}   & D(v)\\
 D(u) & D^{(1)}
 \end{ytableau},
 \]
 where $D^{(1)}$ is some subdiagram.
 If $D^{(1)}$ is empty, we see $w^{(1)}$ is the desired partial permutation.
Otherwise, let $(i,j)$ be the minimal cell of $D^{(1)}$ (lexicographically).
Then the $i$th row and $j$th column of $w^{(1)}$ are zero vectors, so we can construct a new partial permutation $w^{(2)} = w^{(1)} + E(i,j)$ where $E(i,j)_{kl} = \delta_{(i,j) = (k,l)}$.
Note that
\[
 D(w^{(2)}) =  \ytableausetup{boxsize=3.0em}
\begin{ytableau}
\text{\tiny{all boxes}}   & D(v)\\
 D(u) & D^{(2)}
 \end{ytableau},
\]
where $D^{(2)} \subsetneq D^{(1)}$ and the $i$th row of $D(w^{(2)})$ is empty.
Since $D^{(1)}$ has finitely many rows, by iterating this procedure we can remove every cell of $D^{(1)}$ to obtain some partial permutation $w$ with the desired diagram.
\end{proof}

\begin{definition}
\label{d:block}
	Given partial permutations $u$ and $v$ with $n$ and $m$ rows respectively, the {\bf block sum} of $u$ and $v$, denoted $u \boxplus v$, is the unique partial permutation with $n+m$ rows constructed as in Lemma~\ref{l:block}. 
\end{definition}

For $u$ and $v$ partial permutations, we can easily understand many  properties of $u \boxplus v$ in terms of $u$ and $v$.
For our purposes,  we  need to understand how block sums interact with bumpless pipe dreams and Gr\"obner bases for associated Schubert determinantal ideals.

\begin{lemma}
	\label{l:block_bpd}
	For $u,v$ partial permutations and $w = u \boxplus v$, there is a bijection from $\bpd u \times \bpd v$ to $\bpd w$ mapping the pair $(B_u,B_v)$ to a bumpless pipe dream $B_w$ satisfying
	\[
D(B_w) = \ytableausetup{boxsize=3.0em}
\begin{ytableau}
\text{\tiny{all boxes}}  & D(B_v)\\
 D(B_u)
 \end{ytableau}.
\]
\end{lemma}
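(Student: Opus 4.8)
The strategy is to build the bijection explicitly using the droop description of bumpless pipe dreams (Proposition~\ref{p:droop}) together with the observation from Lemma~\ref{l:block} that $D_w$ is the disjoint union of $D_u$ (placed in the southwest block) and $D_v$ (placed in the northeast block), with all cells of the southeast block empty and the northwest block filled. First I would analyze the Rothe bumpless pipe dream $P_w$: since $\widetilde{w}$ sends the rows of the $v$-block to the columns of the $v$-block and the rows of the $u$-block to the columns of the $u$-block (this is exactly the content of the matrix $\left[\begin{smallmatrix} 0 & v \\ u & 0\end{smallmatrix}\right]$ after completion, plus the extra identity-type entries added in the proof of Lemma~\ref{l:block} which lie in the northwest block), the diagram $D(P_w) = D_w$ decomposes as claimed, and one checks directly that $P_w$ restricted to the rows/columns of the $u$-block is $P_u$ and restricted to the rows/columns of the $v$-block is $P_v$, with the northwest block carrying only horizontal pipes and the southeast block only the $\begin{tikzpicture}[scale=.5][x=1.5em, y=1.5em]\draw[color=black, thick](0,1)rectangle(1,2);\draw[thick, rounded corners,ForestGreen] (0,1.5)--(1,1.5);\end{tikzpicture}$ and turning tiles needed to route pipes from the $v$-block exits down through to the $u$-block.

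Next I would show that droops respect the block structure. The key geometric fact is that a droop only modifies tiles weakly northwest of a cell in the current diagram, and more precisely it moves a cell of the diagram strictly northwest within a rectangle whose northwest corner is also a diagram cell. Since every diagram cell of any $P \in \bpd w$ lies in $D_w \subseteq (u\text{-block}) \sqcup (v\text{-block})$, and the northwest block has no diagram cells while the southeast block has no diagram cells, any permissible droop has both its source cell and its destination cell in the same block (one cannot droop from the $v$-block into the $u$-block because the intervening southeast block would need to contain the $\begin{tikzpicture}[scale=.5][x=1.5em, y=1.5em]\draw[color=black, thick](0,1)rectangle(1,2);\draw[thick, rounded corners,ForestGreen] (.5,1)--(.5,1.5)--(1,1.5);\end{tikzpicture}$ tile being drooped, but the $v$-block pipes never have an $\begin{tikzpicture}[scale=.5][x=1.5em, y=1.5em]\draw[color=black, thick](0,1)rectangle(1,2);\draw[thick, rounded corners,ForestGreen] (.5,1)--(.5,1.5)--(1,1.5);\end{tikzpicture}$ tile in the southeast block). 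Hence a sequence of droops applied to $P_w$ factors uniquely into the droops it performs inside the $u$-block and the droops it performs inside the $v$-block, and these commute. This gives a well-defined map: given $(B_u, B_v)$, choose droop sequences realizing them from $P_u$ and $P_v$, and apply the combined sequence to $P_w$ to get $B_w$; conversely, given $B_w \in \bpd w$, restrict to the two blocks to recover $(B_u, B_v)$ (using the partial-permutation restriction already discussed after Proposition~\ref{p:droop} to see the restrictions are genuine bumpless pipe dreams for $u$ and $v$). The diagram identity $D(B_w) = D(B_u) \sqcup D(B_v)$ in the stated block shape is then immediate, since outside the two blocks $B_w$ agrees with $P_w$ and has no blank tiles there.

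I would need to check that these two maps are mutually inverse and that the map is well-defined independent of the chosen droop sequences; both follow from the commutation/factorization statement above, since the final tiling of each block depends only on the multiset of droops performed in that block, and by Proposition~\ref{p:droop} the blocks are exactly reached by $B_u$ and $B_v$. One also must verify that an arbitrary droop sequence producing an element of $\bpd w$ from $P_w$, when factored, gives droop sequences producing \emph{valid} elements of $\bpd u$ and $\bpd v$ (i.e. that the intermediate configurations in each block are legitimate bumpless pipe dreams for the sub-permutations) — this uses that the pipes confined to a block form, after restriction, precisely the pipes of $P_u$ or $P_v$, and the permissibility condition for a droop is tested inside a rectangle contained in that block.

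\textbf{Main obstacle.} The technical heart is the claim that droops cannot cross between blocks and, relatedly, that restriction of a bumpless pipe dream for $w$ to a block is a valid bumpless pipe dream for $u$ (resp. $v$) rather than merely a tiling. The subtlety is controlling the pipes that originate in the $v$-block: they must pass down through the full height of the $u$-block before exiting, so one must argue that in any $B_w$ these through-pipes occupy exactly the columns of the $v$-block within the $u$-block rows, never bending there, so that they are invisible to the restriction to the $u$-block columns and do not obstruct any droop internal to the $u$-block. Making this precise — essentially a careful bookkeeping of which tile types can appear in the southeast and "mixed" regions of the grid for any $B_w$ reachable by droops — is where the real work lies, though it is conceptually straightforward given the droop characterization.
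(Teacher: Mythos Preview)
Your approach is the same as the paper's: describe the block structure of $P_w$ and use Proposition~\ref{p:droop} to argue that droops respect it. However, several of your geometric claims are incorrect, and the argument for why droops cannot cross between blocks does not work as written.

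You contradict yourself about the northwest block: you first (correctly) say it is ``filled,'' but later assert it ``carr[ies] only horizontal pipes'' and ``has no diagram cells.'' In fact the northwest block of $P_w$ consists entirely of \emph{blank} tiles; the extra identity-type entries from Lemma~\ref{l:block} are placed in the \emph{southeast} block. Relatedly, your assertion that ``every diagram cell of any $P \in \bpd w$ lies in $D_w \subseteq (u\text{-block}) \sqcup (v\text{-block})$'' is doubly wrong: $D(P)$ need not be contained in $D_w$ (droops move blanks strictly northwest), and $D_w$ also contains the entire northwest block. Finally, the $u$-block (southwest) and $v$-block (northeast) are incomparable in the northwest partial order, so there is no droop ``from the $v$-block into the $u$-block'' passing through the southeast block; that part of the argument is vacuous.

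The correct mechanism, which is what the paper's one-line argument encodes, is this: the northwest block is all blank in $P_w$ and stays all blank under droops, since any droop rectangle meeting it would require an $\begin{tikzpicture}[scale=.5]\draw[color=black, thick](0,1)rectangle(1,2);\draw[thick, rounded corners] (.5,1)--(.5,1.5)--(1,1.5);\end{tikzpicture}$ tile there; dually, the southeast block contains no blank tiles in $P_w$ and therefore can never supply the blank corner $(k,\ell)$ of a droop. Since the $(i,j)$ corner of a droop cannot lie in the northwest block and the $(k,\ell)$ corner cannot lie in the southeast block, both corners of any droop lie in the same one of the $u$- or $v$-blocks, and the rectangle is confined there. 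This is the invariance you need, and once stated correctly the rest of your outline goes through.
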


\begin{proof}
Recall that the Rothe bumpless pipe dream $P_\pi$ of a permutation $\pi$ is the unique bumpless pipe dream satisfying $D(P_\pi) = D(\pi)$.
Our bijection will map $(P_u,P_v)$ to $P_w$.
Note that
\begin{equation}
\label{eqn:blockrotheBPD}
P_w = \ytableausetup{boxsize=3.2em}
\begin{ytableau}
\text{\tiny{all boxes}}  & P_v\\
 P_u & \none[\text{\tiny{\ only wires}}]	
 \end{ytableau}.
\end{equation}

Let $\phi_u$  and $\phi_v$ be sequences of droop moves satisfying $\phi_u(P_u) = B_u$ and $\phi_v(P_v) = B_v$.
We obtain $B_w$ from $P_w$ by applying $\phi_u$ and $\phi_v$ to the copies of $P_u$ and $P_v$ in $P_w$.
By Proposition~\ref{p:droop}, this map is well-defined and injective.
To see it is surjective, observe that the ``all boxes" region of $P_w$ is invariant under droop moves and prevents droop moves from occurring outside of the regions containing $P_u$ and $P_v$.
\end{proof}

We remark that the bijection in Lemma~\ref{l:block_bpd} is equivalent to mapping $(B_u,B_v)$ to the bumpless pipe dream obtained by replacing $P_u$ with $B_u$ and $P_v$ with $B_v$ in (\ref{eqn:blockrotheBPD}).

\begin{example}
Let $u=\begin{bmatrix} 0 & 1 & 0  \\ 1 & 0 & 0  \\ 0 & 0 & 0  \end{bmatrix}$, $v=\begin{bmatrix} 1 & 0 \\ 0 & 0 \end{bmatrix}$, and $w$ be the permutation associated to the partial permutation $u\boxplus v$.  The permutation matrix for $w$ is
\[ \begin{bmatrix}
 0 & 0 & 0 & 1 & 0 & 0 \\
 0 & 0 & 0 & 0 & 0 & 1 \\ 
0 & 1 & 0 & 0 & 0 & 0 \\ 
1 & 0 & 0 & 0 & 0 & 0 \\ 
0 & 0 & 0 & 0 & 1 & 0 \\ 
0 & 0 & 1 & 0 & 0 & 0 \end{bmatrix}.\]
Taking the bumpless pipe dreams for $u$ and $v$ pictured below,
\[\begin{tikzpicture}[x=1.5em,y=1.5em]
  \draw[step=1,gray, thin] (0,1) grid (3,4);
      \draw[color=black, thick](0,1)rectangle(3,4);
     \draw[thick,rounded corners,ForestGreen] (3,2.5)--(2.5,2.5)--(2.5,1.5)--(.5,1.5)--(.5,1);
   \draw[thick,rounded corners,ForestGreen] (3,3.5)--(1.5,3.5)--(1.5,1);
     \end{tikzpicture}
\hspace{3em}
\begin{tikzpicture}[x=1.5em,y=1.5em]
  \draw[step=1,gray, thin] (0,1) grid (2,3);
      \draw[color=black, thick](0,1)rectangle(2,3);
     \draw[thick,rounded corners,ForestGreen] (2,2.5)--(1.5,2.5)--(1.5,1.5)--(.5,1.5)--(.5,1);
     \end{tikzpicture}\]
we glue and obtain a bumpless pipe dream for $w$:
\[ \begin{tikzpicture}[x=1.5em,y=1.5em]
  \draw[step=1,gray, thin] (0,1) grid (6,7);
      \draw[color=black, thick](0,1)rectangle(6,7);
 \draw[thick,rounded corners,Gray] (6,1.5)--(2.5,1.5)--(2.5,1);
 \draw[thick,rounded corners,Gray] (6,2.5)--(4.5,2.5)--(4.5,1);
 \draw[thick,rounded corners,Gray] (6,5.5)--(5.5,5.5)--(5.5,1);
 \draw[thick,rounded corners,ForestGreen] (6,3.5)--(2.5,3.5)--(2.5,2.5)--(.5,2.5)--(.5,1);
 \draw[thick,rounded corners,ForestGreen] (6,4.5)--(1.5,4.5)--(1.5,1);
 \draw[thick,rounded corners,ForestGreen] (6,6.5)--(4.5,6.5)--(4.5,5.5)--(3.5,5.5)--(3.5,1);
     \end{tikzpicture}.\]
Pipes in the Rothe pipe dream $P_w$ that cannot be modified by droops are pictured in gray.
\end{example}

For $z_{ij}$ an indeterminate, let 
\[\downshift_a(z_{ij}) :=\begin{cases} z_{i+a\ j} & \text{if} \enspace i+a\leq m \\
 0 &\text{otherwise}
\end{cases}
\quad \text{and} \quad 
\rightshift_b(z_{ij}) :=\begin{cases} z_{i\ j+b} & \text{if} \enspace j+b\leq n \\
 0 &\text{otherwise.}
\end{cases}
\]
Extend these operators to act indeterminate-by-indeterminate on monomials, linearly on polynomials and pointwise on sets of polynomials.  
\begin{lemma}
\label{l:block_grobner}
	Let $u$ and $v$ be partial permutations such that $u$ has $b$ columns and $v$ has $a$ rows.
	If $F_u$ and $F_v$ are Gr\"obner bases of the Schubert determinantal ideals $I_u$ and $I_v$, then 	
\begin{equation}
\label{eq:block_g}
	\downshift_a(F_u)\ \cup\  \rightshift_b(F_v)\ \cup\ \{z_{ij}: 1 \leq i \leq a,\ 1 \leq j \leq b\}.
\end{equation}
	is a Gr\"obner basis for $I_{u \boxplus v}$.
\end{lemma}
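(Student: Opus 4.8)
The plan is to combine two standard facts about Gröbner bases with the combinatorial description of the block sum construction. First I would identify the combinatorial structure of $D(u \boxplus v)$: by Lemma~\ref{l:block}, its Rothe diagram is the $(a+b)$-indexed diagram consisting of an $a \times b$ ``all boxes'' rectangle in the northwest, a shifted copy of $D(u)$ directly below it (rows $a+1$ through $a+n$, columns $1$ through $b$), and a shifted copy of $D(v)$ directly to the right (rows $1$ through $a$, columns $b+1$ through $b+p$, where $v$ has $p$ columns). From this one reads off $\Ess(u \boxplus v)$ and the rank function: the essential cells in the rectangle contribute the variables $z_{ij}$ with $1 \le i \le a$, $1 \le j \le b$ (since the rank function is $0$ there); the essential cells coming from $D(u)$ contribute exactly the shifted minor conditions $\rank{Z_{[a+i],[j]}} \le r_u(i,j)$ for $(i,j) \in \Ess(u)$, because the $a$ rows of zeros above $u$ do not change ranks; and symmetrically for $v$. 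So the Fulton-type generators of $I_{u \boxplus v}$ are precisely $\downshift_a(I_u\text{-generators}) \cup \rightshift_b(I_v\text{-generators}) \cup \{z_{ij} : 1 \le i \le a, 1 \le j \le b\}$, and hence the set in \eqref{eq:block_g} generates $I_{u \boxplus v}$ as an ideal. (Here I would be slightly careful: the shift operators send some indeterminates to $0$, but because the relevant submatrices $Z_{[i],[j]}$ for $u$ and $v$ lie entirely within the index ranges that survive the shift, no generator is corrupted.)

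Next I would prove the Gröbner property. The three blocks of generators in \eqref{eq:block_g} involve pairwise disjoint sets of variables: $\downshift_a(F_u)$ uses only variables $z_{ij}$ with $a < i$ and $j \le b$; $\rightshift_b(F_v)$ uses only $z_{ij}$ with $i \le a$ and $j > b$; and the linear forms use only $z_{ij}$ with $i \le a$, $j \le b$. This is exactly the setup in which a union of Gröbner bases in disjoint variable sets is a Gröbner basis of the sum of the ideals (equivalently, of the ideal they generate in the ambient polynomial ring). I would invoke the standard fact — the same one used in the proof of Proposition~\ref{prop:universalgb} via \cite{Conca.DeNegri.Gorla} — that if $G_1 \subseteq \mathbb{C}[X_1]$ and $G_2 \subseteq \mathbb{C}[X_2]$ are Gröbner bases with $X_1 \cap X_2 = \emptyset$, then $G_1 \cup G_2$ is a Gröbner basis of $\langle G_1 \cup G_2 \rangle$ in $\mathbb{C}[X_1 \cup X_2]$ with respect to any monomial order (Buchberger's criterion: every $S$-pair between a polynomial in $G_1$ and one in $G_2$ reduces to zero because the leading terms are coprime). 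Applying this twice — first merging $\downshift_a(F_u)$ with $\rightshift_b(F_v)$, then merging in the linear forms, which trivially form a Gröbner basis on their own variable set — gives that \eqref{eq:block_g} is a Gröbner basis of $I_{u \boxplus v}$.

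One subtlety to address: the lemma does not assume the term order on the ambient ring $R = \mathbb{C}[Z]$ restricts to the given diagonal orders on the subrings for $u$ and $v$. But since all term orders here are diagonal (the ``in the remainder of the paper'' convention stated after Proposition~\ref{prop:universalgb}), any diagonal term order on $R$ restricts to a diagonal term order on each relevant sub-polynomial-ring, and initial terms of minors of $Z^{\Dom}_{[i],[j]}$ are computed by the same diagonal rule whether viewed in the subring or in $R$; the coprimality-of-leading-terms argument only uses the variable-disjointness, not any compatibility of the orders beyond each being a valid monomial order. So $F_u$ and $F_v$ being Gröbner for whatever diagonal orders are induced suffices, and in fact the statement as written (for Gröbner bases under any fixed orders that are restrictions of the ambient order) goes through verbatim.

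The main obstacle I anticipate is the bookkeeping in the first step — verifying cleanly that the Fulton/CDG generators of $I_{u \boxplus v}$ really are the claimed shifts of those of $I_u$ and $I_v$ together with the rectangle variables, i.e. matching up $\Ess(u \boxplus v)$ and $r_{u \boxplus v}$ with the pieces coming from $u$, $v$, and the ``all boxes'' block. This is conceptually routine given the explicit form of $D(u \boxplus v)$ from Lemma~\ref{l:block}, but it requires care about the index shifts and about which minors survive the $\downshift$/$\rightshift$ operators. The Gröbner step itself is then essentially immediate from the disjoint-variables principle.
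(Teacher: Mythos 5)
Your proposal is correct and follows essentially the same route as the paper: identify $I_{u\boxplus v}$ as the sum of the three ideals generated by the shifted generators and the rectangle variables, then conclude via Buchberger's criterion because generators from different blocks have coprime leading terms (indeed live in disjoint variable sets). The paper's proof is just a terser version of this, so no further comment is needed.
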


\begin{proof}
Note that 
\[
I_{u \boxplus v} = \langle\downshift_a (I_u)\rangle + \langle\rightshift_b (I_v)\rangle + \langle z_{ij}: 1 \leq i \leq a,\ 1 \leq j \leq b\rangle.
\]
The result then follows from Buchberger's criterion~\cite[Theorem 2.6.6]{Cox.Little.OShea}, since the greatest common divisor of any two polynomials from different sets in Equation~\eqref{eq:block_g} is $1$.
\end{proof}

\begin{corollary}
\label{c:block_cdg}
	Let $u$ and $v$ be CDG partial permutations.
	Then $u \boxplus v$ is CDG.
\end{corollary}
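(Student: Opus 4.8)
The plan is to combine the two structural results just proved: the explicit CDG generators of $I_{u\boxplus v}$ described via the shift operators $\downshift_a$ and $\rightshift_b$, together with Lemma~\ref{l:block_grobner}, which already tells us how Gr\"obner bases of $I_u$ and $I_v$ assemble into a Gr\"obner basis for $I_{u\boxplus v}$. So the first step is to identify the CDG generators $G_{u\boxplus v}$ explicitly in terms of $G_u$ and $G_v$. Concretely, one checks that
\[
\Dom(u\boxplus v) = \{(i,j): 1\le i\le a,\ 1\le j\le b\}\ \cup\ \downshift_a(\Dom(u))\ \cup\ \rightshift_b(\Dom(v)),
\]
where the ``all boxes'' block is precisely the new dominant region, and that $\Esshat(u\boxplus v) = \downshift_a(\Esshat(u))\cup \rightshift_b(\Esshat(v))$, with the rank function transforming compatibly (the shifted essential cells of $u$ see the same ranks as in $u$, since the columns $1,\dots,b$ are all zeroed out in $Z^{\Dom(u\boxplus v)}$, and symmetrically for $v$). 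Unwinding the definition of $G'_{u\boxplus v}$, this gives
\[
G_{u\boxplus v} = \downshift_a(G_u)\ \cup\ \rightshift_b(G_v)\ \cup\ \{z_{ij}: 1\le i\le a,\ 1\le j\le b\}.
\]

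The second step is then immediate: since $u$ and $v$ are CDG, $G_u$ and $G_v$ are diagonal Gr\"obner bases of $I_u$ and $I_v$ respectively. Apply Lemma~\ref{l:block_grobner} with $F_u = G_u$ and $F_v = G_v$; it produces exactly the set displayed above, and asserts it is a Gr\"obner basis for $I_{u\boxplus v}$. One should note that a diagonal term order on the big matrix $Z$ restricts to a diagonal term order on each of the two shifted sub-matrices of indeterminates (the diagonal of a minor remains its diagonal after a shift of all indices), so the hypothesis that $G_u$, $G_v$ are \emph{diagonal} Gr\"obner bases is exactly what is needed to conclude that the assembled set is a \emph{diagonal} Gr\"obner basis. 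Hence $G_{u\boxplus v}$ is a diagonal Gr\"obner basis, i.e.\ $u\boxplus v$ is CDG.

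The main obstacle is the first step — the bookkeeping to verify that $\Dom$, $\Esshat$, and the rank function behave correctly under block sum, and in particular that no spurious essential cells appear in the ``all boxes'' region or along its boundary with the $D(u)$ and $D(v)$ blocks. This is where one uses that in $w = u\boxplus v$ the first $b$ columns and first $a$ rows are entirely zero (being part of the dominant rectangle), so that within $Z^{\Dom(w)}_{[i],[j]}$ for a shifted essential cell of $u$ the relevant submatrix is block-lower-triangular with a zero block, reducing its rank computation to that of $Z^{\Dom(u)}$ for the corresponding essential cell of $u$. Once these combinatorial identifications are in hand the algebra is a direct invocation of Lemma~\ref{l:block_grobner}; this is genuinely routine and can be stated in a sentence or two.
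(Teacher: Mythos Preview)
Your proposal is correct and follows the approach the paper intends: the corollary is stated immediately after Lemma~\ref{l:block_grobner} with no proof, so the paper treats it as an immediate consequence. You have simply made explicit the bookkeeping---that $\Dom$, $\Esshat$, and the rank conditions split compatibly under $\boxplus$, so that (up to redundant zero minors) $G_{u\boxplus v}$ coincides with the set in Equation~\eqref{eq:block_g} with $F_u=G_u$, $F_v=G_v$---which the paper leaves implicit.
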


\section{Monomial ideal recurrences for block predominant permutations}
\label{s:monomial}

\subsection{Predominant permutations}
We say that a partial permutation $w$ is {\bf predominant} if there is a partition $\lambda = (\lambda_1,\dots,\lambda_k)$ so that
\[
c(w) = (\lambda_1,\dots,\lambda_k,0,\dots,0,\ell,0,\dots) = \lambda 0^h \ell,
\]
for some $h, k, \ell \in \mathbb{Z}_{\geq 0}$. 
Note that we allow $h$ to be zero, so $\ell$ can immediately follow $\lambda_k$, even if $\lambda_k < \ell$.  We say a partial permutation is {\bf copredominant} if it is the transpose of a predominant permutation.
A partial permutation $w$ is {\bf block predominant} if it is a block sum of finitely many predominant partial permutations, i.e.\ if there exist predominant partial permutations $u^{(1)},\dots,u^{(k)}$ so that
\[
w = u^{(1)} \boxplus  \dots \boxplus u^{(k)}.
\]
A predominant permutation is \textbf{indecomposable} if it cannot be written as the permutation associated to a block sum of predominant partial permutations.  A predominant partial permutation is \textbf{indecomposable} if its associated permutation is indecomposable.
Note that only identity permutations are simultaneously dominant and indecomposable.

We now establish some notation that will be used for the remainder of this section.
Fix an indecomposable predominant permutation $w$.
Let 
\[
\lambda = (\lambda_1 \geq \dots \geq \lambda_r) = (m_1^{\ell_1},\dots,m_k^{\ell_k}) = {\rm Dom}(w),
\]
$\rho_i = \ell_1 + \dots + \ell_i$, and $(r,s)$ be the maximal corner in $w$.
Here, we allow $m_k$ to be zero and choose $\ell_k$ so that $r = \rho_k+1$.
Since $w$ is indecomposable, $m_1 < s$.
The pivots of $w$ are
\[
\mathcal{P}(D_w) = \{(\rho_i,m_i + \ell_i): 1 \leq i \leq k\}.
\]
For each $i$, let $w \xrightarrow{\rho_i} u^{(i)}$ by diagram marching.
By Theorem~\ref{t:LS_tree}, for $v = w t_{rw^{-1}(s)}$ we have $\Phi(v,r) = \{u^{(1)},\dots,u^{(k)}\}$ and
\[
\Schub_w = (x_r - y_s) \Schub_v + \sum_{i=1}^k \Schub_{u^{(i)}}.
\]

We now describe the diagrams of the permutations $u^{(i)}$ arising via transition.
\begin{lemma}
\label{lem:transition_essential}
For $1 \leq i \leq k$, let $S_i = \{s': (r,s') \in D_w\ \mbox{and}\ m_i + \ell_i < s' < s\}$. Then \
\[
D_{u^{(i)}} = \left(D_w \setminus \{(r,s'): s' \in S_i\}\right) \cup \{(\rho_i,m_i + \ell_i)\} \cup \{(\rho_i,s'):s' \in S_i\}.
\]
\end{lemma}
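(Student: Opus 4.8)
The plan is to compute $D_{u^{(i)}}$ directly from the definition of the marching operation described just before Lemma~\ref{l:marching}, which says that $w \xrightarrow{\rho_i} u^{(i)}$ is obtained by first deleting the lines emanating from the $\bullet$ at the pivot $(\rho_i, w(\rho_i)) = (\rho_i, m_i+\ell_i)$, and then sliding every cell of $D_w$ lying in the rectangle with corners $(\rho_i, m_i+\ell_i)$ and $(r,s)$ northwest into the vacated positions. So first I would set up coordinates: since $w$ is indecomposable predominant with $\mathrm{Dom}(w) = \lambda = (m_1^{\ell_1}, \dots, m_k^{\ell_k})$ and maximal corner $(r,s)$ with $r = \rho_k + 1$, I would identify exactly which cells of $D_w$ lie weakly southeast of the pivot $(\rho_i, m_i+\ell_i)$ and weakly northwest of $(r,s)$. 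The key combinatorial observation is that, because the code of $w$ is $\lambda 0^h \ell$-shaped (up to the last part), the Rothe diagram $D_w$ is very sparse below the dominant part: the only cells in rows $\rho_i+1, \dots, r$ and columns $m_i+\ell_i+1, \dots, s-1$ are precisely the cells $(r, s')$ with $m_i+\ell_i < s' < s$, i.e.\ the cells $(r,s')$ for $s' \in S_i$ together with the maximal corner itself, and the cell $(\rho_i, m_i+\ell_i)$ region in row $\rho_i$ contains no $D_w$-cells to the right of the pivot column up to column $s$ except possibly those counted, which I would check against the code.

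Next I would trace the marching operation cell by cell. Removing the hook at $(\rho_i, m_i+\ell_i)$ vacates position $(\rho_i, m_i+\ell_i)$ and the cells directly below it in column $m_i+\ell_i$ down to row $r-1$, plus cells directly right of it in row $\rho_i$; but in this sparse configuration the only occupied cells that need to move are the $(r,s')$ for $s' \in S_i \cup \{s\text{-column bookkeeping}\}$. The claim of the lemma is that these cells in the bottom row redistribute so that: the maximal corner $(r,s)$ and the cells $(r,s')$ for $s' \in S_i$ disappear from row $r$; one cell reappears at the pivot location $(\rho_i, m_i+\ell_i)$; and the remaining $|S_i|$ cells reappear as $(\rho_i, s')$ for $s' \in S_i$, i.e.\ they move straight up from row $r$ to row $\rho_i$ keeping their columns, while the corner cell moves up-and-left to fill the pivot slot. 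I would verify this is the unique way to fill vacated positions moving strictly northwest, using that row $\rho_i$ of $D_w$ to the right of column $m_i+\ell_i$ is empty up through column $s-1$ (this follows from $\lambda_{\rho_i} = m_i$, wait — more precisely from the structure of the code and the positions of the $\bullet$'s), so that the vacated slots in row $\rho_i$ are exactly columns $m_i+\ell_i, \dots$ and can absorb the incoming cells, and that column $m_i + \ell_i$ below the pivot is otherwise empty so the only "southeast" material is the bottom-row segment.

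Then the formula follows by bookkeeping: $D_{u^{(i)}}$ equals $D_w$ with the cells $\{(r,s')\colon s'\in S_i\}$ and the corner $(r,s)$ removed — but note the corner $(r,s)$ is \emph{not} among the removed cells in the lemma's statement because $(r,s) \notin D_w \setminus \{(r,s')\colon s'\in S_i\}$ automatically once... hmm, actually $(r,s)\in D_w$, so I need to double check whether the corner is removed; comparing with Lemma~\ref{l:marching} and Theorem~\ref{t:LS_tree}, since $u^{(i)} = v t_{i'r}$ has one fewer inversion than $w$ in the relevant sense, $|D_{u^{(i)}}| = |D_w| - 1$, which is consistent with removing $|S_i|+1$ cells (the $S_i$ cells plus the corner) and adding $|S_i|+1$ cells (the pivot cell plus the $(\rho_i,s')$ cells) — so the corner \emph{is} removed, and I'd state that the corner $(r,s)$ is absorbed into "$D_w \setminus \{(r,s')\colon s'\in S_i\}$ then re-add" being slightly informally written, or rather that $(r,s)\notin D_{u^{(i)}}$ holds because after marching the $\bullet$ of row $r$ shifts. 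The main obstacle I anticipate is precisely this careful bookkeeping of exactly which cells of $D_w$ fall in the marching rectangle and verifying the "unique northwest filling" produces exactly the claimed target — in particular pinning down that no cells from the \emph{interior} of the rectangle (strictly between rows $\rho_i$ and $r$) participate, which rests on the predominance hypothesis forcing $D_w$ to be empty there. I would handle this by an explicit description of $D_w$ via its rank function $r_w$ and the shape of the code, possibly isolating it as a preliminary sublemma, and then the marching computation becomes essentially mechanical.
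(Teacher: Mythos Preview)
Your plan---apply the marching operation at the pivot $(\rho_i,m_i+\ell_i)$ and use the predominant shape of $c(w)$ to see that the only $D_w$-cells inside the marching rectangle lie in row $r$, so that the bookkeeping of which cells move and which row-$\rho_i$ slots receive them is mechanical---is exactly the paper's argument, which is literally two sentences long and says no more than this.

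One correction to the obstacle you flagged: both $w$ and $u^{(i)}$ cover $v$, so $\ell(u^{(i)})=\ell(w)$ and hence $|D_{u^{(i)}}|=|D_w|$, not $|D_w|-1$. Marching moves the $|S_i|+1$ cells $\{(r,s'):s'\in S_i\}\cup\{(r,s)\}$ into the $|S_i|+1$ vacated slots $\{(\rho_i,m_i+\ell_i)\}\cup\{(\rho_i,s'):s'\in S_i\}$; your suspicion that the printed statement omits deleting $(r,s)$ is correct (compare the $u^{(3)}$ diagram in Example~\ref{ex:transition}), but this imprecision is harmless for the downstream uses of the lemma.
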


\begin{proof}
	The cells removed are precisely those that must be moved by the marching operation.
	Since $w$ is predominant, the only vacated cells are the pivot and those to the right of the pivot that are not crossed out, as described above.
\end{proof}

As a consequence, the class of block predominant permutations is closed under transition.
\begin{corollary}
\label{cor:predom_transition}
Let $\pi$ be a block predominant permutation with maximal corner $(r,s)$ and $\Phi(\pi t_{r\pi^{-1}(s)},r) = \{\tau^{(1)},\dots,\tau^{(m)}\}$.
Then each $\tau^{(i)}$ is block predominant.
\end{corollary}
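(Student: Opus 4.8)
The plan is to reduce to the case of a single indecomposable predominant permutation via the block sum structure, and then analyze transition directly. Write $\pi = u^{(1)} \boxplus \cdots \boxplus u^{(k)}$ with each $u^{(j)}$ predominant. First I would observe that the maximal corner $(r,s)$ of $D_\pi$ lies in the block $D(u^{(j_0)})$ corresponding to whichever predominant summand is not an identity and is last in the block ordering; more precisely, since the ``all boxes'' regions in the block construction (Lemma~\ref{l:block}) sit strictly northwest of the proper summand blocks, the lexicographically maximal cell of $D_\pi$ is the lexicographically maximal cell of the bottom-most nonempty summand block, translated by the offsets. The transition on $\pi$ at $(r,s)$ only moves cells weakly northwest of $(r,s)$ and weakly within the rectangle determined by $(r,s)$; by the same observation about the ``all boxes'' regions, the pivots of $\pi$ all lie in that same summand block. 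Hence the marching operation on $D_\pi$ acts as the identity on all the other summand blocks and acts on $D(u^{(j_0)})$ exactly as the marching operation for $u^{(j_0)}$ itself.

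Having localized the action to a single block, I would then reduce further to the case where $u^{(j_0)}$ is indecomposable: if $u^{(j_0)}$ decomposes, peel off its block structure and repeat the previous paragraph's argument until the relevant summand is indecomposable predominant (the maximal corner can never lie in an identity summand, since identity summands contribute no cells to the diagram). So it suffices to show: if $w$ is an indecomposable predominant permutation with maximal corner $(r,s)$, then every $u^{(i)} \in \Phi(wt_{rw^{-1}(s)},r)$ is block predominant. For this I would invoke Lemma~\ref{lem:transition_essential}, which gives an explicit description of $D_{u^{(i)}}$: starting from $D_w$, one deletes the cells $(r,s')$ for $s' \in S_i$ from the bottom row and reinserts them, together with the pivot cell, into row $\rho_i$. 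Translating this diagram description into the Lehmer code, the code of $u^{(i)}$ is obtained from $c(w) = \lambda 0^h \ell$ by increasing the entry in position $\rho_i$ (from $m_i$ to $m_i + \ell_i + |S_i|$, say) and decreasing the entry in position $r$ (by the same total), while leaving all other entries fixed — and one checks the resulting sequence again has the shape ``weakly decreasing partition, then zeros, then a single nonzero entry, then zeros,'' i.e.\ is predominant, possibly after recognizing it as a block sum of smaller predominant pieces when the new long entry at position $\rho_i$ no longer fits monotonically.

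The main obstacle I anticipate is precisely this last bookkeeping step: verifying that the modified code $c(u^{(i)})$ really does present a block predominant permutation. The subtlety is that after the marching operation, the entry that lands in row $\rho_i$ may be larger than some entries of the partition $\lambda$ above it, so $u^{(i)}$ need not itself be predominant — it is only \emph{block} predominant, decomposing at the rows where the code drops back down. Making this decomposition precise requires carefully tracking which rows of $D_w$ are entirely crossed out versus partially occupied, and confirming that the block sum structure of Lemma~\ref{l:block}/Definition~\ref{d:block} is exactly reproduced; I would handle it by splitting $c(u^{(i)})$ at each maximal position where the running value resets and checking each piece is predominant. Everything else — the localization of marching to a single summand block, and the fact that identity summands are inert — is routine once the geometry of the ``all boxes'' regions is spelled out.
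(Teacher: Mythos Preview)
Your approach is essentially the same as the paper's: localize the marching operation to the bottom-left indecomposable predominant block, then invoke Lemma~\ref{lem:transition_essential}. The paper's proof is three sentences---it writes $\pi = w \boxplus \pi'$ with $w$ indecomposable, observes $\tau^{(i)} = u^{(i)} \boxplus \pi'$, and declares ``from Lemma~\ref{lem:transition_essential}, we see each $u^{(i)}$ is block predominant'' without further justification---so your Lehmer-code bookkeeping is precisely the detail the paper sweeps under the rug. One small slip: the new code entry in row $\rho_i$ is $m_i + 1 + |S_i|$, not $m_i + \ell_i + |S_i|$ (row $\rho_i$ gains the single pivot cell plus the $|S_i|$ marched cells), but this does not affect your argument.
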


\begin{proof}
Write $\pi = w \boxplus \pi'$ where $w$ is an indecomposable predominant permutation.
Then each $\tau^{(i)} = u^{(i)} \boxplus \pi'$.
From Lemma~\ref{lem:transition_essential}, we see each $u^{(i)}$ is block predominant.
\end{proof}

\subsection{Monomial ideal recurrences}
For $w$ a permutation, recall that $G_w$ is the set of CDG generators for $I_w$.
We define the monomial ideal
\[
J_w = \langle \init g: g \in G_w \rangle.
\]
Note that, by definition, $G_w$ is a Gr\"obner basis for $I_w$ if and only if $J_w = \init {I_w}$.
Similarly, let $J^\lambda_{ij}$ be the ideal generated by initial terms of non-zero maximal minors in the matrix $Z^\lambda_{[i],[j]}$.
By construction,
\begin{equation}
\label{eq:fake_initial}
J_w = \langle z_{ij}: (i,j) \in \Dom(w)\rangle + \sum_{(i,j) \in \Ess'(w)} J^{{\rm Dom}(w)}_{ij}.
\end{equation}

We will show that transition gives a recurrence on the monomial ideals $J_w$ for block predominant permutations.
To do so, we first prove some technical lemmas about the ideals $J_w$.
Consulting Example~\ref{ex:transition} may help clarify these results and their proofs.

\begin{lemma}
\label{lem:containment}
For $w$ a block predominant permutation, $J_v \subseteq J_w$ and $J_v \subseteq J_{u^{(i)}}$ for each $i$.
\end{lemma}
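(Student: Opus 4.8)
We want to show $J_v \subseteq J_w$ and $J_v \subseteq J_{u^{(i)}}$ for all $i$, where $w$ is block predominant, $v = wt_{rw^{-1}(s)}$ with $(r,s)$ the maximal corner, and the $u^{(i)}$ arise via transition (diagram marching through pivots). By Equation~\eqref{eq:fake_initial}, each of these monomial ideals is built from two pieces: a squarefree monomial piece $\langle z_{ij} : (i,j) \in \Dom(\cdot)\rangle$ coming from the dominant part, and a sum of ideals $J^{\Dom(\cdot)}_{ij}$ over essential cells not in the dominant part. So the plan is to compare these two pieces for $v$ against those for $w$ (and for each $u^{(i)}$), using the explicit description of the diagrams $D_v$ and $D_{u^{(i)}}$ provided by Lemma~\ref{lem:transition_essential} and the definition of marching.

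**The plan.**

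First I would reduce to the indecomposable case: writing $w = \pi \boxplus \pi'$ with $\pi$ indecomposable predominant, the block-sum shift operators $\downshift$ and $\rightshift$ act compatibly on all the ideals $J_\bullet$ (this is essentially the content of Lemma~\ref{l:block_grobner} and Equation~\eqref{eq:fake_initial}), so it suffices to treat $\pi$ itself; I would then keep the notation of the section, with $\lambda = \Dom(w)$. Next, for the containment $J_v \subseteq J_w$: since $v \lessdot w$ and $v$ is obtained by moving the $\bullet$ at $(r,w^{-1}(s))$ to the maximal corner $(r,s)$, one checks $\Dom(v) = \Dom(w) = \lambda$ (the maximal corner lies strictly to the right of all dominant cells because $m_1 < s$, so removing it does not change the dominant part), and that $D_v = D_w \setminus \{(r,s)\}$, $\Ess'(v) \subseteq \Ess'(w) \cup \{\text{cells weakly NW of }(r,s)\}$ with all the relevant rank conditions for $v$ at least as strong as for $w$. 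Then each generator of $J^\lambda_{ij}$ for $(i,j) \in \Ess'(v)$ is a product of initial terms of minors in a submatrix that is contained in (or rank-controlled by) one appearing for $w$, so lies in $J_w$; this is the routine but careful step. For $J_v \subseteq J_{u^{(i)}}$, I would use Lemma~\ref{lem:transition_essential}: $D_{u^{(i)}}$ agrees with $D_v = D_w \setminus\{(r,s)\}$ except that row $r$ loses the cells $\{(r,s') : s' \in S_i\}$ while row $\rho_i$ gains the cells $\{(\rho_i, m_i+\ell_i)\} \cup \{(\rho_i, s') : s' \in S_i\}$, all strictly to the northwest. The dominant parts satisfy $\Dom(u^{(i)}) \supseteq \Dom(v)$ (marching can only enlarge the dominant region since cells move northwest), so $\langle z_{ij} : (i,j)\in\Dom(v)\rangle \subseteq \langle z_{ij} : (i,j)\in\Dom(u^{(i)})\rangle \subseteq J_{u^{(i)}}$, handling the monomial piece; and for each essential cell of $v$ I would exhibit a rank condition for $u^{(i)}$ (at a possibly different cell) whose associated minors dominate the $v$-minors in the diagonal term order, again using that cells only move NW and that $Z^{\lambda}$ differs from $Z^{\Dom(u^{(i)})}$ only by additional zeros.

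**The main obstacle.**

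The delicate point is the comparison of the determinantal pieces $J^{\Dom(\cdot)}_{ij}$ rather than the purely monomial pieces. The subtlety is that $\Ess'(v)$ and $\Ess'(u^{(i)})$ need not literally contain $\Ess'(w)$ as sets of cells — an essential cell can shift position, the relevant submatrix $Z^\lambda_{[i],[j]}$ changes shape, and the rank bound $r_\bullet(i,j)$ changes — so one cannot argue by naive inclusion of generating sets. The right move is to work at the level of the rank conditions defining $X_w$ (each minor appearing in $J^\lambda_{ij}$ is forced by some rank inequality), show the rank inequalities for $v$ are implied by those for $w$ restricted to the appropriate northwest region, and then observe that the diagonal initial term of a minor of $Z^\lambda_{[i],[j]}$ is divisible by the diagonal initial term of a suitable smaller minor sitting inside a submatrix governed by $w$ (resp. $u^{(i)}$). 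I expect this bookkeeping — tracking exactly which minors appear after marching and verifying divisibility of leading diagonal monomials — to be where essentially all the work lies; once it is set up for a single pivot $\rho_i$, the general block predominant case follows formally from the reduction in the first step.
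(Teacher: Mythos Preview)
Your reduction to the indecomposable predominant block and your treatment of $J_v \subseteq J_w$ match the paper; the paper dispatches that containment in one line from $D_v \subseteq D_w$, and your elaboration (that $\Dom(v)=\Dom(w)=\lambda$ since the maximal corner sits strictly right of the dominant part, and that every essential condition for $v$ is already a condition for $w$) is the content behind that line.

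For $J_v \subseteq J_{u^{(i)}}$, your plan is pointed in the right direction but is missing the mechanism that actually makes it work. You propose, for each essential cell of $v$, to exhibit \emph{a} rank condition for $u^{(i)}$ at some single cell whose initial minors dominate the $v$-generators. One cell does not suffice in general. The paper instead uses a \emph{pair} of essential cells of $u^{(i)}$ together with a pigeonhole argument. Write $\Ess'(v)=\{(r,p_1),\dots,(r,p_h)\}$ with $p_1<\cdots<p_h$; the cells of $\Ess'(v)$ not in $\Ess(u^{(i)})$ form a terminal segment $\{(r,p_g),\dots,(r,p_h)\}$. For such a cell $(r,p)$, set $q=r_v(r,p)+1$; the key identity is
\[
q \;=\; \bigl(r_{u^{(i)}}(r,p_{g-1})+1\bigr) \;+\; \bigl(r_{u^{(i)}}(\rho_i,p)+1\bigr),
\]
with both $(r,p_{g-1})$ and $(\rho_i,p)$ in $\Ess(u^{(i)})$. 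Any size-$q$ initial monomial ${\sf t}$ from $Z^{\lambda}_{[r],[p]}$ must then, by pigeonhole on how its $q$ factors distribute between the two regions, be divisible by a generator arising from one of these two cells (when $g=1$ only $(\rho_i,p)$ is needed). Your ``main obstacle'' paragraph correctly anticipates that this divisibility check is where the work lies, but the plan as written---match each $v$-condition against a single $u^{(i)}$-condition---would not close without this two-cell splitting.
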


\begin{proof}
The containment $J_v \subseteq J_w$ is immediate, since we have $D_v \subseteq D_w$.
For the containment $J_v \subseteq J_{u^{(i)}}$, let $\Ess'(v) = \{(r,p_1),\dots,(r,p_h)\}$ with $p_1 < \dots < p_h$ and 
\[
\Ess(v) \setminus \Ess(u^{(i)}) = \{(r,p_g),\dots,(r,p_h)\} \subseteq \{(r,m_j): j > i\} \cup \{(r,s-1)\}.
\]
Fix $p \in \{p_g,\dots,p_h\}$ and let ${\sf t}$ be the initial term of a minor of size $q \coloneqq r_v(r,p)+1$ in $Z^{{\rm Dom}(v)}_{[r],[p]}$.
For $g \neq 1$, we see $(r,p_{g-1}) \in \Ess(u^{(i)})$ and
\[
q = r_{u^{(i)}}(r,p_{g-1}) + 1 + r_{u^{(i)}}(\rho_i,p) + 1.
\]
By the pigeonhole principle, this implies ${\sf t}$ is divisible by an initial term of a generator from one of $(r,\rho_{g-1}), (\rho_i,p) \in \Ess(u^{(i)})$.
When $g = 1$, every choice of ${\sf t}$ is the product of a fixed term and an initial term of a generator from $(\rho_i,p) \in \Ess(u^{(i)})$.
\end{proof}

\begin{lemma}
\label{lem:good_ideal}

For $w$ a block predominant permutation, we have $J_w/J_v = z_{rs} J^\lambda_{r{-1}\ s{-1}}/J_v$.

\end{lemma}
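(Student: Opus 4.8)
The goal is to show $J_w/J_v = z_{rs}\,J^\lambda_{r-1\,s-1}/J_v$ as modules over $R$, where $\lambda = \Dom(w)$, $(r,s)$ is the maximal corner, and $v = wt_{rw^{-1}(s)}$. By Lemma~\ref{lem:containment} we already know $J_v \subseteq J_w$, so the quotients make sense, and by~\eqref{eq:fake_initial} the only essential cell of $w$ that is not already ``seen'' by $v$ is the maximal corner $(r,s)$ itself: indeed, since $w$ is indecomposable predominant with $\Dom(w) = \lambda$ and maximal corner $(r,s)$, one has $\Ess'(w) = \Ess'(v) \cup \{(r,s)\}$ after accounting for which cells of $D_w$ lie in the dominant region. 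Thus $J_w = J_v + J^\lambda_{rs}$, where $J^\lambda_{rs}$ is generated by the initial terms (diagonal terms) of the non-zero maximal minors of $Z^\lambda_{[r],[s]}$. The claim then reduces to the purely combinatorial/monomial identity
\[
J^\lambda_{rs} + J_v = z_{rs}\,J^\lambda_{r-1\,s-1} + J_v,
\]
i.e.\ modulo the monomials already in $J_v$, the diagonal terms of maximal minors of $Z^\lambda_{[r],[s]}$ agree with $z_{rs}$ times the diagonal terms of maximal minors of $Z^\lambda_{[r-1],[s-1]}$.

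\textbf{Key steps.} First I would pin down $r_w(r,s) = q-1$ where $q$ is the size of the maximal minors in question, and observe $r_w(r-1,s-1) = r_w(r,s) - 1 = q-2$, so that the maximal minors of $Z^\lambda_{[r-1],[s-1]}$ have size $q-1$; this is where predominance and the ``maximal corner'' structure enter, via the description of $\Dom(w)=\lambda$ and the pivots. Second, I would analyze a non-zero maximal ($q$-size) minor of $Z^\lambda_{[r],[s]}$: its diagonal term is a product $z_{i_1 j_1}\cdots z_{i_q j_q}$ with $i_1<\dots<i_q\le r$ and $j_1<\dots<j_q\le s$, none of whose factors lie in $\lambda$. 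I would split into two cases according to whether $(i_q,j_q) = (r,s)$. If it is, the term is literally $z_{rs}$ times a diagonal term of a $(q-1)$-minor of $Z^\lambda_{[r-1],[s-1]}$, giving one inclusion on this piece. If $(i_q,j_q)\neq(r,s)$, I claim the diagonal term already lies in $J_v$: the row set $\{i_1,\dots,i_q\}$ fits inside $[r]$ (in fact, because the last factor is not in row $r$ or not in column $s$, there is an essential cell $(r,p)$ of $v$ with $p<s$ or a pivot row through which this submatrix selection witnesses too large a rank for $v$), and I invoke the rank computation $r_v(r,p)+1 \le$ (size of the relevant sub-minor), exactly as in the pigeonhole argument of Lemma~\ref{lem:containment}. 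Conversely, $z_{rs}$ times any diagonal term of a $(q-1)$-minor of $Z^\lambda_{[r-1],[s-1]}$ extends (append $z_{rs}$) to a diagonal term of a $q$-minor of $Z^\lambda_{[r],[s]}$, provided $z_{rs}$ is not a $\lambda$-variable --- which holds since $(r,s)\in D_w$ and $(r,s)\notin\Dom(w)$ as $r_w(r,s) = q-1 > 0$. Assembling these containments gives the desired equality of ideals, hence of quotients by $J_v$.

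\textbf{Main obstacle.} The delicate point is the case $(i_q,j_q)\neq(r,s)$: showing that such a diagonal term of a maximal minor of $Z^\lambda_{[r],[s]}$ is forced into $J_v$. One must argue that a $q$-element ``diagonal'' staircase inside $[r]\times[s]$ avoiding $\lambda$ but not ending at $(r,s)$ must cluster enough entries into a smaller northwest rectangle $[i]\times[j]$ with $(i,j)\in\Ess'(v)$ (or $[i]\times[j]$ determined by a pivot of $w$) so that the induced sub-staircase has size $> r_v(i,j)$. This is where the block-predominant / indecomposable structure is essential: the pivots $(\rho_i, m_i+\ell_i)$ and the rectangular shape of $\lambda$ control exactly where rank jumps, and the key inequality $q = r_v(r,p_{g-1})+1 + r_{u^{(i)}}(\rho_i,p)+1$ from the proof of Lemma~\ref{lem:containment} is the prototype of the pigeonhole bound needed here. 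I would handle this by a careful case analysis on where the staircase $\{(i_t,j_t)\}$ first leaves the ``expected'' diagonal path dictated by $\lambda$, mirroring the $g=1$ versus $g\neq 1$ split already used. Everything else --- the reduction $J_w = J_v + J^\lambda_{rs}$ from~\eqref{eq:fake_initial}, and the two ``append/delete $z_{rs}$'' bijections between diagonal terms --- is routine bookkeeping once the rank function values at $(r,s)$ and $(r-1,s-1)$ are identified.
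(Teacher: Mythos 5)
Your skeleton is the same as the paper's: reduce to the indecomposable predominant block, use $D_w = D_v \cup \{(r,s)\}$ to get $J_w/J_v = J^\lambda_{rs}/J_v$, note the easy containment $z_{rs}J^\lambda_{r-1\ s-1} \subseteq J^\lambda_{rs}$, and for the reverse containment split the diagonal terms of maximal minors of $Z^\lambda_{[r],[s]}$ according to whether they use $z_{rs}$. The problem is that the half of the argument carrying all the content --- showing that a diagonal term $\mathsf{m} = z_{1j_1}\cdots z_{rj_r}$ with $j_r < s$ already lies in $J_v$ --- is exactly the part you label the ``main obstacle'' and defer to ``a careful case analysis'' without executing it. As written this is a plan with the hard step missing, not a proof. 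For comparison, the paper's resolution is concrete: truncate $\mathsf{m}$ at its last occupied column, setting $\mathsf{m}' = \prod_{i:\, j_i \leq j_r} z_{ij_i}$, and prove that $(r,j_r) \in D_v$ by counting against the shape of $\lambda$ (if $p$ is maximal with $\lambda_p \geq j_r$ and $q$ is the multiplicity of $\lambda_{p+1}$ in $\lambda$, then $q+1$ of the chosen columns must lie in $\{\lambda_{p+1}+1,\dots,\lambda_p\}$, forcing $j_r > \lambda_{p+1}+q$ and hence $(r,j_r)\in D_v$); then $\mathsf{m}'$ is a defining generator of $J_v$ dividing $\mathsf{m}$. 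Your sketch gestures at the right ingredients (a pigeonhole bound against $r_v$ at an essential cell of $v$ in row $r$) but never specifies which sub-staircase to truncate to or which cell of $\Ess(v)$ witnesses the membership, and that identification is the whole point of the lemma.

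A smaller error: you assert $r_w(r-1,s-1) = r_w(r,s)-1$. Since $(r,s)\in D_w$ we have $w(r)>s$ and $w^{-1}(s)>r$, so in fact $r_w(r-1,s-1) = r_w(r,s)$. This does not derail your argument --- the relevant fact is only that maximal minors of $Z^\lambda_{[r-1],[s-1]}$ have size one less than those of $Z^\lambda_{[r],[s]}$, which holds simply because the rectangle shrinks by one row and one column --- but the rank identity as stated is false.
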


\begin{proof}
By restricting to the block containing the maximal corner, we may assume $w$ is an indecomposable predominant permutation.
By Lemma~\ref{lemma:diagramtransition}, we have $D_w =  D_v \cup \{(r,s)\}$.
Then 
\[
\Ess(w) \setminus \Ess(v) = \{(r,s)\},
\]
so $J_w/J_v = J_{rs}^\lambda/J_v$.

Note that $z_{rs}J_{r-1\ s-1}^\lambda/J_v \subseteq J_w/J_v$ by definition.
We will show the reverse containment, which says that $J_{rs}^\lambda/J_v \subseteq z_{rs} J^\lambda_{r-1\ s-1}/J_v$.

We explicitly prove the case $r \leq s$.
The case where $r > s$ follows from a similar argument.
Here, the generators of $J_{rs}^\lambda$ have the form ${\sf m} = z_{1j_1}\dots z_{rj_r}$.
If $j_k = s$, then ${\sf m} \in z_{rs}J_{r-1\ s-1}^\lambda$.
Otherwise, $j_r < s$.
Let 
\[
{\sf m}' = \prod_{i: j_i \leq j_r} z_{i j_i}
\]
so ${\sf m}'\mid {\sf m}$.
We claim $(r,j_r) \in D_v$, which implies ${\sf m}'$ is one of the defining generators of $J_v$, and hence the result.

To prove the claim, note that ${\sf m}'$ is the initial term of a rank $\deg({\sf m}')$ minor.
Let $p$ be maximal such that $\lambda_p \geq j_r$ and $q$ be the multiplicity of $\lambda_{p+1}$ in $\lambda$.
Necessarily, $q+1$ columns in the minor corresponding to ${\sf m}'$ must be in the set $\{\lambda_{p+1}+1,\dots,\lambda_p\}$.
The $(q+1)$st such column is $j_r$ by the definition of ${\sf m}'$.
Observe that $v(p+i) = \lambda_{p+1} + i$ for $i \in \{1,\dots,q\}$.
By exhaustion, $j_r > \lambda_{p+1}+q$, so $(r,j_r) \in D_v$,  completing the proof.
\end{proof}

We now establish the key recurrence on the monomial ideals $J_w$ for block predominant permutations.

\begin{theorem}
\label{thm:predom_transition}
Let $w$ be a block predominant permutation.
Then
\begin{equation}
\label{eq:ideal_recurrence}
J_w = (J_v + \langle z_{rs} \rangle) \cap \left( \bigcap J_{u^{(i)}} \right).
\end{equation}
\end{theorem}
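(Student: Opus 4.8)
The plan is to prove the two inclusions of \eqref{eq:ideal_recurrence} separately, leaning on Lemmas~\ref{lem:containment} and~\ref{lem:good_ideal}, and reducing as usual to the case where $w$ is an indecomposable predominant permutation (the general block predominant case then follows by Lemma~\ref{l:block_grobner}/Corollary~\ref{c:block_cdg}, since block summing a fixed partial permutation $\pi'$ onto each of $w, v, u^{(i)}$ simply adds the same generators $\rightshift$-shifted into a disjoint set of variables, and intersection commutes with this operation). So assume $w$ is indecomposable predominant with $D_w = D_v \cup \{(r,s)\}$.

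For the inclusion $J_w \subseteq (J_v + \langle z_{rs}\rangle) \cap \bigcap_i J_{u^{(i)}}$: by Lemma~\ref{lem:containment}, $J_v \subseteq J_w$ and $J_v \subseteq J_{u^{(i)}}$ for every $i$, so it suffices to show each generator of $J_w$ lies in $J_v + \langle z_{rs}\rangle$ and in each $J_{u^{(i)}}$. Membership in $J_v + \langle z_{rs}\rangle$ is exactly the content of Lemma~\ref{lem:good_ideal}: $J_w = J_v + z_{rs} J^\lambda_{r-1\,s-1} \subseteq J_v + \langle z_{rs}\rangle$. For membership in $J_{u^{(i)}}$, a generator of $J_w$ not already in $J_v$ is, by Lemma~\ref{lem:good_ideal}, of the form $z_{rs} {\sf n}$ where ${\sf n}$ is an initial term of a minor of $Z^\lambda_{[r-1],[s-1]}$; one must check this monomial is divisible by some generator of $J_{u^{(i)}}$. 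The essential set of $u^{(i)}$ includes the cell $(\rho_i, m_i+\ell_i)$ (by Lemma~\ref{lem:transition_essential}), and the minor of $Z^\lambda_{[r],[s]}$ with initial term $z_{rs}{\sf n}$ restricts — after deleting appropriate rows/columns — to a minor of $Z^\lambda_{[\rho_i], [m_i+\ell_i]}$ of the right size, giving a divisor of $z_{rs}{\sf n}$ among the CDG generators at $(\rho_i, m_i+\ell_i)$; a pigeonhole/rank bookkeeping argument (as in the proof of Lemma~\ref{lem:containment}) handles the remaining columns in $S_i$ using the cell $(r, m_i+\ell_i)$ or $(r,s-1)$ of $\Ess(u^{(i)})$.

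For the reverse inclusion $(J_v + \langle z_{rs}\rangle) \cap \bigcap_i J_{u^{(i)}} \subseteq J_w$: a monomial ${\sf m}$ in the left side lies in $J_v + \langle z_{rs}\rangle$, so either ${\sf m} \in J_v \subseteq J_w$ (done), or $z_{rs} \mid {\sf m}$, say ${\sf m} = z_{rs}{\sf n}$. In the latter case we must show ${\sf n} \in J^\lambda_{r-1\,s-1}$ using that $z_{rs}{\sf n} \in J_{u^{(i)}}$ for all $i$. The point is that the generators of $J_{u^{(i)}}$ supported in rows $[r-1]$ and columns $[s-1]$, together with the generator coming from the pivot cell $(\rho_i, m_i+\ell_i)$, collectively force ${\sf n}$ to contain the initial term of a full-size minor of $Z^\lambda_{[r-1],[s-1]}$: as $i$ ranges over $1,\dots,k$ the pivot rows $\rho_i$ sweep across all of $[r-1]$, so the divisibility constraints accumulate to give a rank-$(r-1)$ (equivalently, $\min\{r-1,s-1\}$-sized) pattern. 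This is essentially the argument of Lemma~\ref{lem:good_ideal} run in reverse, and I expect it to be the main obstacle: one needs to argue that a monomial divisible by a generator of each $J_{u^{(i)}}$ but by no generator of $J_v$ must be divisible by a maximal minor initial term of $Z^\lambda_{[r-1],[s-1]}$, which requires carefully tracking how the columns $j_1 < \dots$ of ${\sf n}$ interact with the partition $\lambda$ and the ``shifted'' pivot rows, exactly the kind of exhaustion done at the end of Lemma~\ref{lem:good_ideal}. Once that combinatorial core is in place, $z_{rs}{\sf n} \in z_{rs}J^\lambda_{r-1\,s-1} \subseteq J_w$ by Lemma~\ref{lem:good_ideal}, finishing the proof.
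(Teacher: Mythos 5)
Your skeleton agrees with the paper's: reduce to the indecomposable predominant case, use Lemma~\ref{lem:containment} to pass to quotients by $J_v$, and use Lemma~\ref{lem:good_ideal} to rewrite $J_w/J_v$ as $z_{rs}J^\lambda_{r-1\ s-1}/J_v$ and $(J_v+\langle z_{rs}\rangle)/J_v$ as $\langle z_{rs}\rangle/J_v$. But after these reductions the entire content of the theorem is the single identity
\[
\bigcap_{i=1}^{k} J_{u^{(i)}}/J_v \;=\; J^\lambda_{r-1\ s-1}/J_v,
\]
from which both inclusions follow simultaneously, since $\langle z_{rs}\rangle\cap J^\lambda_{r-1\ s-1}=z_{rs}J^\lambda_{r-1\ s-1}$ (the variable $z_{rs}$ does not occur in $J^\lambda_{r-1\ s-1}$). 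This identity is precisely what you label ``the main obstacle'' and do not prove, so the proposal as written has a genuine gap rather than a complete argument. The paper's mechanism is an induction on the number $q$ of pivots, proving $\bigcap_{i=1}^{q} J_{u^{(i)}}/J_v = J^\lambda_{\rho_q\ s-1}/J_v$ one band of rows at a time: the base case uses that $\Ess(u^{(1)})\setminus\Ess(v)$ is a single cell in row $\rho_1$, and the inductive step analyzes the cells $(\rho_q,m_j)$ and $(\rho_q,s-1)$ of $\Ess(u^{(q)})\setminus\Ess(v)$ supplied by Lemma~\ref{lem:transition_essential}, including the delicate case $\ell_q=1$ where $\Dom(u^{(q)})$ strictly contains $\Dom(v)$ and additional variables $z_{\rho_q h}$ are specialized to zero. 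Your one-shot heuristic (``the pivot rows sweep across $[r-1]$, so the constraints accumulate'') gives no mechanism for combining constraints across the different $u^{(i)}$: a single $J_{u^{(i)}}$ only controls rows up to $\rho_i$, so the intersection must be built up incrementally, which is exactly why the induction on $q$ is needed.

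A secondary weakness is that your forward inclusion handles membership of each generator $z_{rs}{\sf n}$ of $J_w$ in each $J_{u^{(i)}}$ by a separate restriction-of-minors and pigeonhole sketch that is also not carried out. In the paper this direction costs nothing extra: once the displayed identity is known, $z_{rs}J^\lambda_{r-1\ s-1}\subseteq J^\lambda_{r-1\ s-1}$ modulo $J_v$ is immediate. I would encourage you to restructure the proof around proving the displayed identity (by induction on the pivots) rather than chasing the two inclusions of the theorem separately; as it stands, both of your inclusions secretly depend on the same unproved combinatorial core.
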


\begin{proof}
Note that the maximal corner always occurs in the bottom left block of $w$, so we can assume $w$ is predominant.
By Lemma~\ref{lem:containment}, we see $J_v$ is contained in both sides of Equation~(\ref{eq:ideal_recurrence}).
Therefore, we need only prove
\[
J_w/J_v = (J_v + \langle z_{rs} \rangle)/J_v \cap \left( \bigcap_{i=1}^k J_{u^{(i)}}/J_v \right).
\]
By Lemma~\ref{lem:good_ideal}, we see $J_w/J_v = z_{rs} J^\lambda_{r-1\ s-1}/J_v$, while $(J_v + \langle z_{rs} \rangle)/J_v = \langle z_{rs} \rangle/J_v$.
Therefore, the following claim will imply our result:

\ 

\noindent \textbf{Claim:} For $q \leq k$, we have 
\[
\bigcap_{i=1}^q J_{u^{(i)}}/J_v = J^\lambda_{\rho_q s-1}/J_v.
\]

We prove the claim by induction on $q$.
For the base case $q=1$, recall $\rho_1 = \ell_1$ and observe that $\Ess(u^{(1)})\setminus \Ess(v) = \{(\rho_1,s)\}$, so $J_{u^{(1)}}/J_v = J^\lambda_{\rho_1 s-1}/J_v$.
More generally, 
\[
\Ess(u^{(j)}) \setminus \Ess(v) \subseteq \{(\rho_j,m_i): i < j)\} \cup\{(\rho_j,s-1)\}.
\]
We prove the inductive step by showing both containments.
Example~\ref{ex:transition} illustrates these arguments.

\begin{itemize}
\item[$\subseteq$] By Lemma~\ref{lem:transition_essential}, the only new essential conditions come from minors corresponding to entries $(\rho_q,m_j) \in \Ess(u^{(q)})$, which we analyze individually.
The initial terms of these minors are monomials in the defining generators of $J^\lambda_{\rho_q m_j}$, which have support in the rows $\rho_{j}+1,\dots,\rho_q$.
Since $J^\lambda_{\rho_j s-1} = \bigcap_{i=1}^j J_{u^{(j)}}$ by inductive hypothesis, we see all of our generators arising from $(\rho_q,m_j)$ are contained in $J^\lambda_{\rho_q m_j} \cap J^\lambda_{\rho_j s-1}$, which in turn is contained in $J^\lambda_{\rho_q s-1}$.
\item[$\supseteq$] Since $(\rho_q, s-1) \in \Ess(u^{(q)}) / \Ess(v)$ by Lemma~\ref{lem:transition_essential}, we see $J^\lambda_{\rho_q m_j}/J_v \subseteq J_{u^{(q)}}/J_v$ unless some additional variables $z_{\rho_q h}$ are set to zero, in which case ${\rm Dom}(v) \subsetneq {\rm Dom}(u^{(q)})$.
This only happens when $\ell_q = 1$, in which case $\rho_{q-1} = \rho_q - 1$.
In this case, all minors not involving one of these $z_{\rho_q j}$ are in $J_{u^{(q)}}/J_v$, while those involving them are found in $\langle z_{\rho_q j} \rangle \cap J^\lambda_{\rho_{q-1} s-1} \subseteq \bigcap_{i=1}^q J_{u^{(i)}}$.

\end{itemize}
This completes our proof.
\end{proof}

\begin{example}
\label{ex:transition}
Consider the permutation $w = 67341\ 10\ 2589$.
Applying the Lascoux-Sch\"utzenberger transition equations, we have
\[
v= 673419258,\quad \Phi(w,6) = \{u^{(1)} = 693417258, u^{(2)} = 673914258, u^{(3)} = 673491258\}
\]
 with diagrams
 \[
  \begin{tikzpicture}[x=1.5em,y=1.5em]
      \draw[color=black, thick](0,1)rectangle(9,7);
     \filldraw[color=black, fill=gray!30, thick](0,6)rectangle(1,7);
     \filldraw[color=black, fill=gray!30, thick](0,5)rectangle(1,6);
     \filldraw[color=black, fill=gray!30, thick](0,4)rectangle(1,5);
     \filldraw[color=black, fill=gray!30, thick](0,3)rectangle(1,4);
     \filldraw[color=black, fill=gray!30, thick](1,6)rectangle(2,7);
	\filldraw[color=black, fill=gray!30, thick](1,5)rectangle(2,6);
	\filldraw[color=black, fill=gray!30, thick](1,4)rectangle(2,5);
	\filldraw[color=black, fill=gray!30, thick](1,3)rectangle(2,4);
	\filldraw[color=black, fill=gray!30, thick](2,6)rectangle(3,7);
	\filldraw[color=black, fill=gray!30, thick](2,5)rectangle(3,6);
	\filldraw[color=black, fill=gray!30, thick](3,6)rectangle(4,7);
	\filldraw[color=black, fill=gray!30, thick](3,5)rectangle(4,6);
	\filldraw[color=black, fill=gray!30, thick](4,6)rectangle(5,7);
	\filldraw[color=black, fill=gray!30, thick](4,5)rectangle(5,6);
	\filldraw[color=black, fill=gray!30, thick](1,1)rectangle(2,2);
	\filldraw[color=black, fill=gray!30, thick](4,1)rectangle(5,2);
	\filldraw[color=black, fill=gray!30, thick](7,1)rectangle(8,2);
	\filldraw[color=black, fill=gray!30, thick](8,1)rectangle(9,2);

     \draw[thick,ForestGreen] (9,6.5)--(5.5,6.5)--(5.5,1);
     \draw[thick,ForestGreen] (9,5.5)--(6.5,5.5)--(6.5,1);
     \draw[thick,ForestGreen] (9,4.5)--(2.5,4.5)--(2.5,1);
     \draw[thick,ForestGreen] (9,2.5)--(.5,2.5)--(.5,1);
     \draw[thick,ForestGreen] (9,3.5)--(3.5,3.5)--(3.5,1);
     \filldraw [black](5.5,6.5)circle(.1);
     \filldraw [black](6.5,5.5)circle(.1);
     \filldraw [black](2.5,4.5)circle(.1);
     \filldraw [black](.5,2.5)circle(.1);
     \filldraw [black](3.5,3.5)circle(.1);
\draw (-1,4) node  [align=left] {$D_w=$};
     \end{tikzpicture}
\hspace{1em}
 \begin{tikzpicture}[x=1.5em,y=1.5em]
      \draw[color=black, thick](0,1)rectangle(9,7);
     \filldraw[color=black, fill=gray!30, thick](0,6)rectangle(1,7);
     \filldraw[color=black, fill=gray!30, thick](0,5)rectangle(1,6);
     \filldraw[color=black, fill=gray!30, thick](0,4)rectangle(1,5);
     \filldraw[color=black, fill=gray!30, thick](0,3)rectangle(1,4);
     \filldraw[color=black, fill=gray!30, thick](1,6)rectangle(2,7);
	\filldraw[color=black, fill=gray!30, thick](1,5)rectangle(2,6);
	\filldraw[color=black, fill=gray!30, thick](1,4)rectangle(2,5);
	\filldraw[color=black, fill=gray!30, thick](1,3)rectangle(2,4);
	\filldraw[color=black, fill=gray!30, thick](2,6)rectangle(3,7);
	\filldraw[color=black, fill=gray!30, thick](2,5)rectangle(3,6);
	\filldraw[color=black, fill=gray!30, thick](3,6)rectangle(4,7);
	\filldraw[color=black, fill=gray!30, thick](3,5)rectangle(4,6);
	\filldraw[color=black, fill=gray!30, thick](4,6)rectangle(5,7);
	\filldraw[color=black, fill=gray!30, thick](4,5)rectangle(5,6);
	\filldraw[color=black, fill=gray!30, thick](1,1)rectangle(2,2);
	\filldraw[color=black, fill=gray!30, thick](4,1)rectangle(5,2);
	\filldraw[color=black, fill=gray!30, thick](7,1)rectangle(8,2);

     \draw[thick,ForestGreen] (9,6.5)--(5.5,6.5)--(5.5,1);
     \draw[thick,ForestGreen] (9,5.5)--(6.5,5.5)--(6.5,1);
     \draw[thick,ForestGreen] (9,4.5)--(2.5,4.5)--(2.5,1);
     \draw[thick,ForestGreen] (9,2.5)--(.5,2.5)--(.5,1);
     \draw[thick,ForestGreen] (9,3.5)--(3.5,3.5)--(3.5,1);
     \draw[thick,ForestGreen] (9,1.5)--(8.5,1.5)--(8.5,1);
     \filldraw [black](5.5,6.5)circle(.1);
     \filldraw [black](6.5,5.5)circle(.1);
     \filldraw [black](2.5,4.5)circle(.1);
     \filldraw [black](.5,2.5)circle(.1);
     \filldraw [black](3.5,3.5)circle(.1);
     \filldraw [black](8.5,1.5)circle(.1);
\draw (-1,4) node  [align=left] {$D_v=$};
     \end{tikzpicture}
\]

\[
   \begin{tikzpicture}[x=1.5em,y=1.5em]
      \draw[color=black, thick](0,1)rectangle(9,7);
     \filldraw[color=black, fill=gray!30, thick](0,6)rectangle(1,7);
     \filldraw[color=black, fill=gray!30, thick](0,5)rectangle(1,6);
     \filldraw[color=black, fill=gray!30, thick](0,4)rectangle(1,5);
     \filldraw[color=black, fill=gray!30, thick](0,3)rectangle(1,4);
     \filldraw[color=black, fill=gray!30, thick](1,6)rectangle(2,7);
	\filldraw[color=black, fill=gray!30, thick](1,5)rectangle(2,6);
	\filldraw[color=black, fill=gray!30, thick](1,4)rectangle(2,5);
	\filldraw[color=black, fill=gray!30, thick](1,3)rectangle(2,4);
	\filldraw[color=black, fill=gray!30, thick](2,6)rectangle(3,7);
	\filldraw[color=black, fill=gray!30, thick](2,5)rectangle(3,6);
	\filldraw[color=black, fill=gray!30, thick](3,6)rectangle(4,7);
	\filldraw[color=black, fill=gray!30, thick](3,5)rectangle(4,6);
	\filldraw[color=black, fill=gray!30, thick](4,6)rectangle(5,7);
	\filldraw[color=black, fill=gray!30, thick](4,5)rectangle(5,6);
	\filldraw[color=black, fill=gray!30, thick](1,1)rectangle(2,2);
	\filldraw[color=black, fill=gray!30, thick](4,1)rectangle(5,2);
	\filldraw[color=black, fill=gray!30, thick](7,5)rectangle(8,6);
	\filldraw[color=black, fill=gray!30, thick](6,5)rectangle(7,6);

     \draw[thick,ForestGreen] (9,6.5)--(5.5,6.5)--(5.5,1);
     \draw[thick,ForestGreen] (9,1.5)--(6.5,1.5)--(6.5,1);
     \draw[thick,ForestGreen] (9,4.5)--(2.5,4.5)--(2.5,1);
     \draw[thick,ForestGreen] (9,2.5)--(.5,2.5)--(.5,1);
     \draw[thick,ForestGreen] (9,3.5)--(3.5,3.5)--(3.5,1);
     \draw[thick,ForestGreen] (9,5.5)--(8.5,5.5)--(8.5,1);
     \filldraw [black](5.5,6.5)circle(.1);
     \filldraw [black](6.5,1.5)circle(.1);
     \filldraw [black](2.5,4.5)circle(.1);
     \filldraw [black](.5,2.5)circle(.1);
     \filldraw [black](3.5,3.5)circle(.1);
     \filldraw [black](8.5,5.5)circle(.1);
\draw (-1.25,4) node  [align=left] {$D_{u^{(1)}}=$};
     \end{tikzpicture}
\hspace{1em}
  \begin{tikzpicture}[x=1.5em,y=1.5em]
      \draw[color=black, thick](0,1)rectangle(9,7);
     \filldraw[color=black, fill=gray!30, thick](0,6)rectangle(1,7);
     \filldraw[color=black, fill=gray!30, thick](0,5)rectangle(1,6);
     \filldraw[color=black, fill=gray!30, thick](0,4)rectangle(1,5);
     \filldraw[color=black, fill=gray!30, thick](0,3)rectangle(1,4);
     \filldraw[color=black, fill=gray!30, thick](1,6)rectangle(2,7);
	\filldraw[color=black, fill=gray!30, thick](1,5)rectangle(2,6);
	\filldraw[color=black, fill=gray!30, thick](1,4)rectangle(2,5);
	\filldraw[color=black, fill=gray!30, thick](1,3)rectangle(2,4);
	\filldraw[color=black, fill=gray!30, thick](2,6)rectangle(3,7);
	\filldraw[color=black, fill=gray!30, thick](2,5)rectangle(3,6);
	\filldraw[color=black, fill=gray!30, thick](3,6)rectangle(4,7);
	\filldraw[color=black, fill=gray!30, thick](3,5)rectangle(4,6);
	\filldraw[color=black, fill=gray!30, thick](4,6)rectangle(5,7);
	\filldraw[color=black, fill=gray!30, thick](4,5)rectangle(5,6);
	\filldraw[color=black, fill=gray!30, thick](1,1)rectangle(2,2);
	\filldraw[color=black, fill=gray!30, thick](4,3)rectangle(5,4);
	\filldraw[color=black, fill=gray!30, thick](7,3)rectangle(8,4);
	\filldraw[color=black, fill=gray!30, thick](3,3)rectangle(4,4);

     \draw[thick,ForestGreen] (9,6.5)--(5.5,6.5)--(5.5,1);
     \draw[thick,ForestGreen] (9,5.5)--(6.5,5.5)--(6.5,1);
     \draw[thick,ForestGreen] (9,4.5)--(2.5,4.5)--(2.5,1);
     \draw[thick,ForestGreen] (9,2.5)--(.5,2.5)--(.5,1);
     \draw[thick,ForestGreen] (9,3.5)--(8.5,3.5)--(8.5,1);
	\draw[thick,ForestGreen] (9,1.5)--(3.5,1.5)--(3.5,1);
     \filldraw [black](5.5,6.5)circle(.1);
     \filldraw [black](6.5,5.5)circle(.1);
     \filldraw [black](2.5,4.5)circle(.1);
     \filldraw [black](.5,2.5)circle(.1);
     \filldraw [black](8.5,3.5)circle(.1);
     \filldraw [black](3.5,1.5)circle(.1);
     \draw (-1.25,4) node  [align=left] {$D_{u^{(2)}}=$};
     \end{tikzpicture}
\]

\[\begin{tikzpicture}[x=1.5em,y=1.5em]
      \draw[color=black, thick](0,1)rectangle(9,7);
     \filldraw[color=black, fill=gray!30, thick](0,6)rectangle(1,7);
     \filldraw[color=black, fill=gray!30, thick](0,5)rectangle(1,6);
     \filldraw[color=black, fill=gray!30, thick](0,4)rectangle(1,5);
     \filldraw[color=black, fill=gray!30, thick](0,3)rectangle(1,4);
     \filldraw[color=black, fill=gray!30, thick](1,6)rectangle(2,7);
	\filldraw[color=black, fill=gray!30, thick](1,5)rectangle(2,6);
	\filldraw[color=black, fill=gray!30, thick](1,4)rectangle(2,5);
	\filldraw[color=black, fill=gray!30, thick](1,3)rectangle(2,4);
	\filldraw[color=black, fill=gray!30, thick](2,6)rectangle(3,7);
	\filldraw[color=black, fill=gray!30, thick](2,5)rectangle(3,6);
	\filldraw[color=black, fill=gray!30, thick](3,6)rectangle(4,7);
	\filldraw[color=black, fill=gray!30, thick](3,5)rectangle(4,6);
	\filldraw[color=black, fill=gray!30, thick](4,6)rectangle(5,7);
	\filldraw[color=black, fill=gray!30, thick](4,5)rectangle(5,6);
	\filldraw[color=black, fill=gray!30, thick](1,2)rectangle(2,3);
	\filldraw[color=black, fill=gray!30, thick](4,2)rectangle(5,3);
	\filldraw[color=black, fill=gray!30, thick](7,2)rectangle(8,3);
	\filldraw[color=black, fill=gray!30, thick](0,2)rectangle(1,3);

     \draw[thick,ForestGreen] (9,6.5)--(5.5,6.5)--(5.5,1);
     \draw[thick,ForestGreen] (9,5.5)--(6.5,5.5)--(6.5,1);
     \draw[thick,ForestGreen] (9,4.5)--(2.5,4.5)--(2.5,1);
     \draw[thick,ForestGreen] (9,1.5)--(.5,1.5)--(.5,1);
     \draw[thick,ForestGreen] (9,2.5)--(8.5,2.5)--(8.5,1);
     \draw[thick,ForestGreen] (9,3.5)--(3.5,3.5)--(3.5,1);
     \filldraw [black](5.5,6.5)circle(.1);
     \filldraw [black](6.5,5.5)circle(.1);
     \filldraw [black](2.5,4.5)circle(.1);
     \filldraw [black](.5,1.5)circle(.1);
 \filldraw [black](8.5,2.5)circle(.1);
     \filldraw [black](3.5,3.5)circle(.1);
\draw (-1.25,4) node  [align=left] {$D_{u^{(3)}}=$};
     \end{tikzpicture}.\]

The dominant component in these diagrams is $\lambda = (5^2,2^2,0^1)$ except that ${\rm Dom}(u^{(3)}) = \lambda' = (5^2,2^3)$.

The monomials coming from minors corresponding to the essential boxes $(6,2)$ and $(6,4)$ are in $J_v$.
By Lemma~\ref{lem:containment}, $J_v \subseteq J_w,J_{u^{(1)}},J_{u^{(2)}},J_{u^{(4)}}$, but the reader can also check this directly.
For example, $z_{33}z_{45}z_{51}z_{62} \in J_v$ is also in $J_{u^{(1)}}$ and is divisible by $z_{51}z_{62} \in J_{u^{(2)}}$ as well as by $z_{51} \in J_{u^{(3)}}$.

We have $J_w / J_v = z_{69} J^{\lambda}_{58}  / J_v$ by Lemma~\ref{lem:good_ideal}.
By direct observation, $J_{u^{(1)}}/J_v = J^\lambda_{28}/J_v$.
To see that
$(J_{u^{(1)}} \cap J_{u^{(2)}})/J_v = J^\lambda_{48}$, observe that $J^\lambda_{48} \subseteq J_{u^{(2)}}$ by Equation~\eqref{eq:fake_initial}.
The opposite containment follows since the minors coming from $(4,5) \in \Ess(u^{(2)})$ correspond to $J^\lambda_{45}$, while $(J^\lambda_{45} \cap J^\lambda_{28}) \subseteq J^\lambda_{48}$.
Next, we show $(J_{u^{(1)}} \cap J_{u^{(2)}} \cap J_{u^{(3)}})/J_v = J^\lambda_{58}/J_v$.
To show the forward containment, we study $(5,2),(5,4),(5,8) \in \Ess(u^{(3)})$ individually.
We have
\[
J^\lambda_{52} \cap J^\lambda_{48}, J^{\lambda'}_{54} \cap J^\lambda_{28}, J^{\lambda'}_{58} \subseteq J^\lambda_{58}.
\]
Moreover, the only monomials generating $J^\lambda_{58}$ that aren't found in $J^{\lambda'}_{58}$ are those involving $z_{51}$ and $z_{52}$, so we see $J^\lambda_{58} \subseteq J^{\lambda'}_{58} + J^\lambda_{52} \cap J^\lambda_{48}$.
\end{example}

%
%

\section{Proof of main theorem}
\label{s:proof}

We will use the following lemma of Knutson and Miller, originally stated in greater generality.
Recall $R = \mathbb{C}[Z]$ where $Z = (z_{ij})_{i \in [m],j\in[n]}$.
\begin{lemma}[{\cite[Lemma 1.7.5]{Knutson.Miller}}]
	\label{l:cheating_lemma}
	Let $I \subseteq R$ be an ideal such that $\Spec R/I$ is stable under the $T \times T$ action.
	Suppose $J \subseteq I$ is an equidimensional radical ideal.
	If we have the equality $[\Spec R/I]_{T \times T} = [\Spec R/J]_{T \times T}$ of equivariant cohomology classes, then $I = J$.
\end{lemma}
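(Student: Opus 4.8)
The plan is to leverage the three equivariant axioms (Normalization, Additivity, Degeneration) to convert the equality of classes into a statement about the support and multiplicities of $\Spec R/I$, and then to exploit that $J$ is radical. Since $J \subseteq I$ is given, it suffices to prove the reverse inclusion $I \subseteq J$. Throughout I assume, as the statement implicitly requires, that $\Spec R/J$ is also $T\times T$-stable, so its class is defined.

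The crucial preliminary I would isolate is a \emph{positivity principle}: for any nonempty $T\times T$-stable subscheme $X \subseteq \Spec R$, the class $[X]_{T\times T}$ is a nonzero polynomial, homogeneous of degree $\dim X$ in the grading where each $x_i$ and each $y_j$ has degree $1$. To see this, Gr\"obner-degenerate $X$ with respect to any term order; by Degeneration $[X]_{T\times T} = [\init{X}]_{T\times T}$, and since $\init{X}$ is cut out by a monomial ideal, Additivity and Normalization give $[\init{X}]_{T\times T} = \sum_{\mathcal B} \mathrm{mult}_{C_\mathcal{B}}(\init{X})\,\prod_{(i,j)\in\mathcal B}(x_i - y_j)$, the sum running over the top-dimensional coordinate-subspace components $C_\mathcal{B}$ (so $|\mathcal B| = \dim X$), with strictly positive integer multiplicities. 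Substituting $x_i \mapsto t$ and $y_j \mapsto 0$ sends each summand to $t^{\dim X}$, so $[X]_{T\times T}$ specializes to a strictly positive integer multiple of $t^{\dim X}$; hence $[X]_{T\times T}$ is nonzero and homogeneous of degree $\dim X$. In particular, if $I = R$ then $[\Spec R/J]_{T\times T} = [\Spec R/I]_{T\times T} = 0$, forcing $J = R = I$; so henceforth I assume $\Spec R/I$ and $\Spec R/J$ are nonempty.

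Next I would pin down the support. Since $J \subseteq I$, $\Spec R/I$ is a closed subscheme of $\Spec R/J$. Write $\Spec R/J = C_1 \cup \dots \cup C_r$ for its irreducible components; equidimensionality gives them a common dimension $d$, and radicality together with Additivity give $[\Spec R/J]_{T\times T} = \sum_{i=1}^r [C_i]_{T\times T}$, which by the positivity principle is homogeneous of degree $d$. Comparing with $[\Spec R/I]_{T\times T}$, which is homogeneous of degree $\dim \Spec R/I$ and nonzero, forces $\dim \Spec R/I = d$. Now fix the minimal prime $\mathfrak p_i$ of $C_i$. Since $J$ is radical, $J_{\mathfrak p_i}$ is the maximal ideal $\mathfrak p_i R_{\mathfrak p_i}$ of the local ring $R_{\mathfrak p_i}$; since $J \subseteq I$, the localization $I_{\mathfrak p_i}$ contains it and hence equals either $\mathfrak p_i R_{\mathfrak p_i}$ or $R_{\mathfrak p_i}$. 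In the first case $C_i$ is a component of $\Spec R/I$ of multiplicity $1$ (and top-dimensional, as $\dim\Spec R/I = d$); in the second $C_i$ is not in the support of $\Spec R/I$. Moreover any top-dimensional component of $\Spec R/I$ is an irreducible $d$-dimensional closed subset of $C_1 \cup \dots \cup C_r$, hence equals some $C_i$. So Additivity yields $[\Spec R/I]_{T\times T} = \sum_{i\in S}[C_i]_{T\times T}$, where $S = \{i : I_{\mathfrak p_i} \neq R_{\mathfrak p_i}\}$.

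Finally, the hypothesized class equality becomes $\sum_{i \notin S}[C_i]_{T\times T} = 0$; but a sum of classes of nonempty subschemes is nonzero by the positivity principle, so $S = \{1,\dots,r\}$. Thus $I_{\mathfrak p_i} = \mathfrak p_i R_{\mathfrak p_i}$ for every $i$, so any $f \in I$ lies in the contraction $\mathfrak p_i$ for every $i$, hence in $\bigcap_{i=1}^r \mathfrak p_i = J$ (since $J$ is radical with minimal primes $\mathfrak p_1, \dots, \mathfrak p_r$). This gives $I \subseteq J$, and combined with $J \subseteq I$ we conclude $I = J$. I expect the main obstacle to be the positivity principle of the second paragraph: it is the one point where the geometry of the $T\times T$-action on affine space genuinely enters, and care is needed that Additivity ranges only over top-dimensional components, so that the specialization $x_i \mapsto t,\ y_j \mapsto 0$ really produces a nonzero multiple of $t^{\dim X}$ with no cancellation. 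Everything afterward is bookkeeping with dimensions and localizations.
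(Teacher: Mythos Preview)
The paper does not prove this lemma: it is quoted verbatim from \cite[Lemma~1.7.5]{Knutson.Miller} and used as a black box, so there is no in-paper argument to compare against. Your proof is essentially the standard one and is correct in outline; the reduction to a positivity statement via Degeneration, followed by localization at the minimal primes of $J$, is exactly how one expects to argue.

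One small point worth tightening: the degree of $[X]_{T\times T}$ should be the \emph{codimension} of $X$, not its dimension. The Normalization axiom as literally written in the paper appears to contain a typo (the product should run over $(i,j)\notin\mathcal B$, or equivalently $C_{\mathcal B}$ should be cut out by $z_{ij}$ for $(i,j)\in\mathcal B$), as one sees by checking against the bumpless pipe dream weight and Theorem~\ref{t:KM}. This does not affect your logic at all --- homogeneity of some fixed degree plus nonvanishing under $x_i\mapsto t,\ y_j\mapsto 0$ is what you actually use --- but you should state the degree correctly. Also, your remark that one must assume $\Spec R/J$ is $T\times T$-stable is well taken; in every application within the paper $J$ is an intersection of monomial ideals, so this is automatic.
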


Given a bumpless pipe dream $P$, write $\mathcal L_P=\langle z_{ij}:(i,j)\in D(P) \rangle.$ 

\begin{proposition}
\label{p:predominant}
	Suppose $w$ is a block predominant permutation.  Then 
	\begin{enumerate}
		\item the CDG generators are a diagonal Gr\"obner basis for $I_w$ and
		\item $\displaystyle \init{I_w}=\bigcap_{P\in\bpd{w}}\mathcal L_P$.
	\end{enumerate}
\end{proposition}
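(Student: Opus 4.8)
The plan is to prove (1) and (2) simultaneously, by induction along the transition tree of $w$; concretely I would induct on the maximal corner $(r,s)$ of $D_w$ in lexicographic order, the base case $w=\mathrm{id}$ being trivial ($I_w=0$, and $\bpd w$ has a single element, with empty diagram). For the inductive step, take $v=wt_{rw^{-1}(s)}$ and $u^{(1)},\dots,u^{(k)}$ as in Section~\ref{s:monomial}. By Corollary~\ref{cor:predom_transition} (and the analogous, elementary fact for $v$, whose diagram is $D_w\setminus\{(r,s)\}$), $v$ and every $u^{(i)}$ is again block predominant; moreover each has maximal corner lexicographically strictly below $(r,s)$. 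For $v$ this is because $D_v=D_w\setminus\{(r,s)\}$; for each $u^{(i)}$ it is because the marching operation from the pivot $(\rho_i,m_i+\ell_i)$ relocates only the cells of $D_w$ lying in the rectangle with corners $(\rho_i,m_i+\ell_i)$ and $(r,s)$, moving each strictly to the northwest, and fixes every other cell, so no cell of $D_{u^{(i)}}$ can be lexicographically as large as $(r,s)$. Hence the inductive hypothesis applies to $v$ and to each $u^{(i)}$.

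The heart of the argument will be to deduce $J_w=\bigcap_{P\in\bpd w}\mathcal L_P$ from the two recurrences now in hand. Writing $K_\pi\coloneqq\bigcap_{P\in\bpd\pi}\mathcal L_P$, the inductive hypothesis gives $J_v=K_v$ and $J_{u^{(i)}}=K_{u^{(i)}}$. The elementary but essential point is that, since every $\mathcal L_P$ is generated by variables, adjoining the single variable $z_{rs}$ commutes with intersection: a monomial lies in $\bigcap_P\bigl(\mathcal L_P+\langle z_{rs}\rangle\bigr)$ if and only if it is divisible by $z_{rs}$ or lies in $\bigcap_P\mathcal L_P$, i.e.\ if and only if it lies in $\langle z_{rs}\rangle+\bigcap_P\mathcal L_P$. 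Combining this with $\mathcal L_P+\langle z_{rs}\rangle=\mathcal L_{\Psi(P)}$ for $P\in\bpd v$ and $\mathcal L_P=\mathcal L_{\Psi(P)}$ for $P\in\bpd{u^{(i)}}$ — both read directly off the diagram identities of Lemma~\ref{lemma:diagramtransition} — the monomial recurrence of Theorem~\ref{thm:predom_transition} unwinds to
\[
J_w=\bigl(J_v+\langle z_{rs}\rangle\bigr)\cap\bigcap_i J_{u^{(i)}}=\Bigl(\bigcap_{P\in\bpd v}\mathcal L_{\Psi(P)}\Bigr)\cap\bigcap_i\bigcap_{P\in\bpd{u^{(i)}}}\mathcal L_{\Psi(P)}=\bigcap_{Q\in\bpd w}\mathcal L_Q=K_w,
\]
the final equality holding because $\Psi$ is a bijection onto $\bpd w$.

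To finish I would promote $J_w=K_w$ to $\init{I_w}=K_w$; this is assertion (2) outright, and since $J_w=\init{I_w}$ is exactly the statement that the CDG generators are a diagonal Gr\"obner basis, it also yields (1). One always has $J_w\subseteq\init{I_w}$, since the CDG generators lie in $I_w$, so $K_w\subseteq\init{I_w}$. Also $\Spec R/\init{I_w}$ is stable under $T\times T$, being cut out by a monomial ideal, while $K_w$ is equidimensional and radical, being an intersection of the coordinate primes $\mathcal L_Q$, all of codimension $|D(Q)|=\ell(w)$. By Degeneration and Theorem~\ref{t:KM}, $[\Spec R/\init{I_w}]_{T\times T}=[X_w]_{T\times T}=\Schub_w$; by Additivity and Normalization together with Theorem~\ref{thm:BPDformula}, $[\Spec R/K_w]_{T\times T}=\sum_{Q\in\bpd w}\prod_{(i,j)\in D(Q)}(x_i-y_j)=\sum_{Q\in\bpd w}\wt(Q)=\Schub_w$, using that distinct reduced bumpless pipe dreams of $w$ have distinct diagrams so that no component is double counted. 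The two equivariant classes agreeing, Lemma~\ref{l:cheating_lemma} applied with $I=\init{I_w}$, $J=K_w$ forces $\init{I_w}=K_w=J_w$, and the induction is complete.

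I expect the middle step, the identity $J_w=K_w$, to be the main obstacle: it hinges on the variable $z_{rs}$ adjoined in Theorem~\ref{thm:predom_transition} being precisely the cell that Lemma~\ref{lemma:diagramtransition} inserts into the diagrams arising from $\bpd v$ — so that the ideal-theoretic recurrence and the bumpless-pipe-dream recurrence are literally the same recurrence — and on the fact, peculiar to squarefree monomial ideals generated by variables, that adjoining a variable distributes over intersection. By contrast, the termination of the induction (via the lexicographic order on maximal corners) and the cohomological bookkeeping in the last step are comparatively routine given the results already in place, though the latter does quietly use the injectivity of $P\mapsto D(P)$ on $\bpd w$.
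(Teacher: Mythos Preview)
Your proposal is correct and follows essentially the same route as the paper's own proof: induction on the maximal corner, combining Theorem~\ref{thm:predom_transition} with the inductive hypothesis and Lemma~\ref{lemma:diagramtransition} to obtain $J_w=\bigcap_{P\in\bpd w}\mathcal L_P$, then invoking Lemma~\ref{l:cheating_lemma} with the equivariant-class computation to upgrade $J_w\subseteq\init{I_w}$ to equality. You are in fact slightly more explicit than the paper in two places: you spell out why adjoining $\langle z_{rs}\rangle$ distributes over the intersection of the variable ideals $\mathcal L_P$, and you flag the use of the injectivity of $P\mapsto D(P)$ on $\bpd w$ when applying Additivity---both of which the paper's chain of equalities passes over silently.
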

\begin{proof}
We will proceed by induction on the position of the maximal corner of $w$.  Without loss of generality, we may assume $w$ is predominant.

In the base case, $w$ is dominant and the statement is trivial.

Now assume $w$ is not dominant.  Furthermore, assume the statement holds for all block predominant permutations whose maximal corners occur lexicographically before the pivot of $w$.  

By induction, we know $J_v=\init{I_v}$ and likewise $J_{u^{(i)}}=\init{I_{u^{(i)}}}$.  
Therefore
\begin{align*}
J_w&=(\init{I_v}+\langle z_{r,s}\rangle)\cap \bigcap_{u\in \Phi(v,r)} \init{I_{u}} & \text{by Theorem~\ref{thm:predom_transition}}\\
&=\left(\bigcap_{P\in\bpd{v}}\mathcal L_P+\langle z_{r,s}\rangle\right) \cap  \bigcap_{u\in \Phi(v,r)} \bigcap_{P\in\bpd{u}}\mathcal L_P & \text{by inductive hypothesis}\\
		&=\bigcap_{P\in \bpd{w}} \mathcal L_P &\text{by Lemma~\ref{lemma:diagramtransition}}.
\end{align*}

By additivity of equivariant classes, we know that \[[\Spec R/ J_w]_{T\times T}=\left[\Spec R/\bigcap_{P\in \bpd{w}} \mathcal L_P \right]_{T \times T}=\sum_{P\in\bpd{w}} \wt(P)=\mathfrak S_{w}(\mathbf x;\mathbf y)\] (where the last equality is Theorem~\ref{thm:BPDformula}).

By Theorem~\ref{t:KM}, $[\Spec R/I_w]_{T \times T}=\mathfrak S_{w}(\mathbf x;\mathbf y)$.  By degeneration, $[\Spec R / \init{I_w}]_{T \times T}=\mathfrak S_{w}(\mathbf x;\mathbf y)$ as well.

Trivially, $J_w\subseteq \init{I_w}$.  Since $J_w$ is an intersection of primes of the same dimension, it is equidimensional and radical.  Furthermore, $\Spec R/\init{I_w}$ is stable with respect to the $T \times T$ action.  Therefore by Lemma~\ref{l:cheating_lemma}, we have $J_w=\init{I_w}$ and the proposition holds.
\end{proof}

A similar fact is true for vexillary permutations.

\begin{proposition}
\label{p:vexillary}
If $w$ is vexillary, then $w$ is CDG. Moreover, in this case, we have 
\[
 \init{I_w}=\bigcap_{P\in\bpd{w}}\mathcal L_P.
\]
\end{proposition}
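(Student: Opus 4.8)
The plan is to reduce the vexillary case to Proposition~\ref{p:predominant} by showing that every vexillary permutation is, in the relevant sense, ``already covered'' by the block predominant machinery — or, failing a direct reduction, to run the same transition induction but with the combinatorial input adapted to the vexillary setting. First I would recall the structural fact that a permutation $w$ is vexillary if and only if its rows (equivalently, the cells of $D_w$ read by rows) form a totally ordered collection under inclusion; this forces the essential set $\Ess(w)$ to be a chain in the componentwise order on $[n]\times[n]$, and the rank values along that chain are determined by the shape and flag of $w$. In particular, after stripping off $\Dom(w)$, the reduced essential set $\Esshat(w)$ is still a chain, and the matrices $Z^{\Dom(w)}_{[i],[j]}$ for $(i,j)\in\Esshat(w)$ are nested. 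The point is that for such a nested, chain-structured family of (generalized) determinantal conditions, the CDG generators coincide with the ``CDG generators'' studied by Conca--De~Negri--Gorla: one invokes \cite[Theorem~4.2]{Conca.DeNegri.Gorla} (already used in the proof of Proposition~\ref{prop:universalgb}) to conclude that these generators form a diagonal Gr\"obner basis, hence $w$ is CDG and $\init{I_w}=J_w$ is a radical (squarefree) monomial ideal.

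For the second assertion, I would identify $J_w$ with $\bigcap_{P\in\bpd w}\mathcal L_P$ by the same equivariant-cohomology argument used at the end of the proof of Proposition~\ref{p:predominant}. Concretely: $J_w\subseteq\init{I_w}$ trivially; $J_w$ is radical and equidimensional because it is squarefree and all associated primes correspond to the facets of a pure simplicial complex (this purity is where the chain structure of $\Esshat(w)$ does real work — it guarantees every minimal prime of $J_w$ has the expected codimension $\ell(w)$); then $[\Spec R/\init{I_w}]_{T\times T}=\Schub_w(\xx;\mathbf y)$ by Theorem~\ref{t:KM} and Degeneration, while $[\Spec R/\bigl(\bigcap_P\mathcal L_P\bigr)]_{T\times T}=\sum_{P\in\bpd w}\wt(P)=\Schub_w(\xx;\mathbf y)$ by Additivity, Normalization, and Theorem~\ref{thm:BPDformula}; so Lemma~\ref{l:cheating_lemma} forces $\init{I_w}=\bigcap_{P\in\bpd w}\mathcal L_P$ once we know $\init{I_w}=J_w$. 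Thus the crux is really the combinatorial identification $J_w=\bigcap_{P\in\bpd w}\mathcal L_P$, or equivalently the Gr\"obner claim plus radicality; alternatively, one can sidestep proving $J_w=\bigcap_P\mathcal L_P$ by hand and instead argue: $\bigcap_P\mathcal L_P\subseteq\init{I_w}$ (each $\mathcal L_P$ contains $J_w\subseteq\init{I_w}$... ) — but the cleanest route is the cohomological squeeze above.

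Alternatively, and perhaps more in the spirit of the paper, I would try a direct reduction: show every indecomposable vexillary permutation satisfies the hypothesis of Proposition~\ref{prop:universalgb}, or more generally that a vexillary permutation is a block sum of permutations each having a single essential cell, so that Corollary~\ref{c:block_cdg} and Lemma~\ref{l:block_grobner} (together with Lemma~\ref{l:block_bpd}, which already gives $\bpd w$ as a product and hence $\bigcap_P\mathcal L_P$ as the corresponding ``block-diagonal'' intersection) immediately yield both conclusions. This hinges on a purely combinatorial lemma about vexillary diagrams: the connected components of $D_w$, when $w$ is indecomposable vexillary, can be arranged so that the essential cells lie on an antichain of blocks — I expect this is essentially the Knutson--Miller--Yong description of vexillary matrix Schubert varieties as products of one-step flag (Grassmannian) conditions.

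The main obstacle I anticipate is the radicality/equidimensionality of $J_w$ in the genuinely vexillary (non-block-predominant) cases: unlike the block predominant situation, where Theorem~\ref{thm:predom_transition} hands us $J_w$ as an explicit intersection of primes of equal dimension, here one must either (i) verify directly that the CDG generators are a diagonal Gr\"obner basis — which for vexillary $w$ should follow from the nested chain structure and \cite[Theorem~4.2]{Conca.DeNegri.Gorla}, but requires care when $\Dom(w)$ interacts with the later essential cells — or (ii) independently establish that $\bigcap_{P\in\bpd w}\mathcal L_P$ is Cohen--Macaulay of codimension $\ell(w)$, i.e.\ that the associated simplicial complex is pure (and ideally shellable). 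I would attack (i) via the block decomposition sketched above, since it localizes the Gr\"obner question to the single-essential-cell pieces handled by Proposition~\ref{prop:universalgb}; the residual difficulty is then proving the combinatorial claim that indecomposable vexillary permutations block-decompose in the required way, which I would prove by induction on the number of essential cells using the total order on rows.
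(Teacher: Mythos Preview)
The paper's proof is completely different from what you propose and is essentially a two-line citation: it invokes \cite[Theorem~3.8]{Knutson.Miller.Yong}, which already establishes that the \emph{Fulton} generators are a diagonal Gr\"obner basis when $w$ is vexillary, and then observes that any Fulton minor meeting $\Dom(w)$ has its main diagonal meeting $\Dom(w)$ as well, so such minors can be discarded and the CDG generators remain Gr\"obner. The bumpless pipe dream decomposition of $\init{I_w}$ is then read off directly from the prime decomposition in \cite[Theorem~4.4]{Knutson.Miller.Yong} via the dictionary between flagged tableaux and bumpless pipe dreams worked out in \cite{Weigandt}. No transition induction, no block decomposition, and no equidimensionality argument is used.

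Your proposed reductions do not go through. Take $w=1432$: its diagram rows $\emptyset,\{2,3\},\{2\},\emptyset$ are nested, so $w$ is vexillary, but $\Dom(1432)=\emptyset$ and $\Esshat(1432)=\{(2,3),(3,2)\}$ has two cells. Thus $1432$ is neither predominant nor a nontrivial block sum (there is no rectangle of ``all boxes'' in the northwest corner), so it is not block predominant and does not satisfy the hypothesis of Proposition~\ref{prop:universalgb}; both of your structural reductions fail already here. Your appeal to \cite[Theorem~4.2]{Conca.DeNegri.Gorla} is also insufficient as stated: that result, as used in this paper, treats maximal minors of a \emph{single} matrix $Z^\lambda_{[i],[j]}$, whereas a general vexillary $w$ imposes a nested chain of determinantal conditions of varying sizes. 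Handling that chain is precisely the content of \cite{Knutson.Miller.Yong} (or the commutative-algebra theory of ladder determinantal ideals), and you would end up reproving a substantial piece of that paper. Finally, the equidimensionality of $J_w$ that you correctly flag as the main obstacle to the cohomological squeeze is not something you can get from the chain structure alone without essentially redoing the KMY argument; the paper sidesteps this entirely by citing their already-computed prime decomposition.
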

\begin{proof}
Since $w$ is vexillary, the Fulton generators are diagonal Gr\"obner by \cite[Theorem~3.8]{Knutson.Miller.Yong}. If a defining minor intersects the dominant part of $D_w$, then so does its main diagonal. Hence its initial term is a multiple of that variable, and we may remove that equation from the generating set to obtain a new Gr\"obner basis \cite[Lemma~2.7.3]{Cox.Little.OShea}. Thus, the CDG generators are also diagonal Gr\"obner.

The characterization in terms of bumpless pipe dreams then follows from \cite[Theorem~4.4]{Knutson.Miller.Yong}, as interpreted via bumpless pipe dreams in \cite{Weigandt}.
\end{proof}

We say a permutation is \textbf{banner} if it is a block sum of predominant, copredominant, and vexillary partial permutations.
The following theorem is our main result, combining essentially everything else established in this paper.

\begin{theorem}
\label{t:banner}
	Let $w$ be a banner permutation. Then
	\begin{enumerate}
		\item $w$ is CDG, and 
		\item $\init {I_w} = \bigcap_{P \in \bpd w} \mathcal{L}_P.$ In particular, $\init {I_w}$ is radical.
	\end{enumerate}
\end{theorem}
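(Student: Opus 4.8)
The plan is to reduce the general banner case to the two special cases already handled, namely block predominant permutations (Proposition~\ref{p:predominant}) and vexillary permutations (Proposition~\ref{p:vexillary}), using the block sum machinery of Section~\ref{s:block}. By definition, a banner permutation $w$ is a block sum $u^{(1)} \boxplus \dots \boxplus u^{(k)}$ where each factor $u^{(i)}$ is predominant, copredominant, or vexillary. So the first step is to establish that each individual factor is CDG and satisfies the bumpless-pipe-dream intersection formula for its initial ideal. For predominant factors this is Proposition~\ref{p:predominant}; for vexillary factors this is Proposition~\ref{p:vexillary}. For copredominant factors — transposes of predominant permutations — I would invoke the transpose symmetry of matrix Schubert varieties: transposing $Z$ interchanges diagonal and antidiagonal term orders... wait, that is the wrong symmetry. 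Instead note that transposing a matrix sends $X_w$ to $X_{w^{-1}}$ and carries a diagonal term order to another diagonal term order (reflecting the matrix across its main diagonal fixes main diagonals of minors), so being CDG and the intersection formula are preserved under $w \mapsto w^{-1}$; hence copredominant factors are handled by Proposition~\ref{p:predominant} applied to the transpose. I would state this transpose-invariance as a short lemma or remark if it is not already available.

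The second step is to combine the factors. For claim~(1), that $w$ is CDG, I would apply Corollary~\ref{c:block_cdg} (a block sum of CDG partial permutations is CDG), which is exactly built for this purpose; iterating over the $k$ factors gives that $w = u^{(1)} \boxplus \dots \boxplus u^{(k)}$ is CDG. For claim~(2), the intersection formula, I would use Lemma~\ref{l:block_bpd}: the bumpless pipe dreams of $w$ are in bijection with tuples $(B_1,\dots,B_k)$ of bumpless pipe dreams of the factors, with $D(B_w)$ obtained by placing the $D(B_i)$ in the block-antidiagonal arrangement (and all boxes in the complementary corner block). Translating this into ideals: the diagram of $B_w$ is a disjoint union of a fixed ``all boxes'' rectangle $\mathcal{R}$ together with shifted copies of the $D(B_i)$ in disjoint sets of rows and columns. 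Hence
\[
\mathcal{L}_{B_w} = \langle z_{ij} : (i,j) \in \mathcal R\rangle + \sum_{i=1}^k \downshift\!\rightshift(\mathcal{L}_{B_i}),
\]
where the shifts are as in Lemma~\ref{l:block_grobner}. Intersecting over all $B_w$, and using that the shifted ideals $\downshift\!\rightshift(\mathcal{L}_{B_i})$ for different $i$ involve disjoint variable sets (and are all disjoint from $\mathcal R$'s variables), the intersection distributes: $\bigcap_{B_w} \mathcal{L}_{B_w} = \langle z_{ij} : (i,j)\in\mathcal R\rangle + \sum_i \downshift\!\rightshift\bigl(\bigcap_{B_i} \mathcal{L}_{B_i}\bigr)$. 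By the inductive/base results this equals $\langle z_{ij}:(i,j)\in\mathcal R\rangle + \sum_i \downshift\!\rightshift(\init{I_{u^{(i)}}})$, which by Lemma~\ref{l:block_grobner} (and the description of $I_{u\boxplus v}$ in its proof) is exactly $\init{I_w}$ for the block sum. The final sentence, that $\init{I_w}$ is radical, is then immediate since an intersection of the radical monomial ideals $\mathcal{L}_P$ is radical (equivalently, the $\init$ of the squarefree CDG generators is squarefree).

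The main obstacle I anticipate is the bookkeeping in the ``intersection distributes over a sum of ideals in disjoint variables'' step — this is true but needs care, since in general intersections do not distribute over sums. The key point is that the variable sets partition: $\mathcal R$, the image of $\downshift\!\rightshift$ on variables of $u^{(1)}$, \dots, on variables of $u^{(k)}$ are pairwise disjoint, and each $\mathcal{L}_{B_i}$ (after shifting) is generated by a subset of its own block of variables. For ideals generated by variables from disjoint sets, $\bigl(A_1 + B_1\bigr) \cap \bigl(A_1 + B_2\bigr) = A_1 + (B_1 \cap B_2)$ when $A_1$ lives in a variable set disjoint from those of $B_1, B_2$; iterating this over all the blocks and all tuples gives the claimed distribution. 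I would isolate this as a small combinatorial-commutative-algebra lemma about monomial ideals in disjoint variable sets rather than proving it inline. A secondary, more minor point is making sure the copredominant/transpose reduction is airtight: I would double-check that $w \mapsto w^{-1}$ sends banner permutations to banner permutations by checking it commutes appropriately with $\boxplus$ (transpose of a block sum is again a block sum, with factors transposed and in reverse order), so that the transpose-invariance of being CDG and of the intersection formula can be applied factor-by-factor or globally.
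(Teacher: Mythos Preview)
Your proposal is correct and, for part~(1), matches the paper exactly: Propositions~\ref{p:predominant} and~\ref{p:vexillary} handle the predominant and vexillary factors, transposition handles the copredominant factors (the paper also glosses this with ``by transposition''), and Corollary~\ref{c:block_cdg}/Lemma~\ref{l:block_grobner} assembles the block sum.

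For part~(2), you take a genuinely different route. The paper packages the block-sum step as Proposition~\ref{p:block_CDG}, whose proof establishes only the easy containment $\init{I_{\widetilde u}}+\init{I_{\widetilde v}}\subseteq \init{I_w}$ directly and then invokes equivariant cohomology: both sides have class $\Schub_w(\mathbf x;\mathbf y)$, the left side is equidimensional and radical, so Lemma~\ref{l:cheating_lemma} forces equality. You instead compute $\bigcap_P \mathcal L_P$ head-on via the distributivity of intersections over sums of variable-generated ideals in pairwise disjoint variable sets, and then match it against $\init{I_w}$ using Lemma~\ref{l:block_grobner}. Your route is more elementary---it avoids the appeal to Theorem~\ref{t:KM} and Lemma~\ref{l:cheating_lemma} at the block-sum stage---at the price of isolating and proving that small distributivity lemma (which is indeed easy for such ideals, as you anticipate). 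The paper's route reuses machinery already in play (Lemma~\ref{l:cheating_lemma} is needed anyway inside Proposition~\ref{p:predominant}) and yields the slightly cleaner standalone Proposition~\ref{p:block_CDG}, which assumes only the intersection formula for the factors, not that they are CDG.
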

\begin{proof}
For (1), Proposition~\ref{p:predominant} proves the predominant case.
The copredominant case follows by transposition, while the vexillary case is Proposition~\ref{p:vexillary}.
The result follows by Lemma~\ref{l:block_grobner}.
Meanwhile, (2) follows from Proposition~\ref{p:predominant}, Proposition~\ref{p:vexillary}, and Proposition~\ref{p:block_CDG} below.
\end{proof}

\begin{proposition}
\label{p:block_CDG}
Let $w=u^{(1)}\boxplus \cdots \boxplus u^{(k)}$ be a block sum of partial permutations. 
If \[\init{I_{u^{(i)}}}=\bigcap_{P\in \bpd{u^{(i)}}}\mathcal L_P\] for all $i\in [k]$, then \[\init{I_{w}}=\bigcap_{P\in \bpd{w}}\mathcal L_P,\]
so in particular $\init{I_w}$ is radical.
\end{proposition}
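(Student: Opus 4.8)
The plan is to reduce the block-sum case to the block Gröbner lemma (Lemma~\ref{l:block_grobner}) and the block bumpless pipe dream bijection (Lemma~\ref{l:block_bpd}), mirroring how Corollary~\ref{c:block_cdg} reduced CDG-ness to Lemma~\ref{l:block_grobner}. It suffices to treat $k=2$, say $w = u \boxplus v$ with $u$ having $b$ columns and $v$ having $a$ rows, and then iterate. First I would observe that since each $\init{I_{u^{(i)}}}$ equals an intersection of the squarefree monomial ideals $\mathcal{L}_P$, it is in particular radical, so each $u^{(i)}$ is CDG and the CDG generators $G_{u^{(i)}}$ are diagonal Gröbner bases. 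By Lemma~\ref{l:block_grobner}, the set $\downshift_a(G_u) \cup \rightshift_b(G_v) \cup \{z_{ij} : 1 \le i \le a,\ 1 \le j \le b\}$ is then a diagonal Gröbner basis for $I_w$, and one checks this is exactly $G_w$ (the $\downshift$/$\rightshift$ operators carry dominant parts and essential sets of $u,v$ to the corresponding data of $w$, and the extra ``all boxes'' block contributes precisely the variables $z_{ij}$ in $\Dom(w)$ there). Hence $\init{I_w} = \langle \init{g} : g \in G_w\rangle = \langle \downshift_a(\init{I_u}) \rangle + \langle \rightshift_b(\init{I_v}) \rangle + \langle z_{ij} : 1 \le i \le a,\ 1\le j\le b\rangle$.

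The second step is the purely combinatorial identity on monomial ideals: for ideals $A$ in the variables of the bottom-left block (shifted down by $a$) and $B$ in the variables of the top-right block (shifted right by $b$), one has
\[
\Bigl(\langle \downshift_a(A)\rangle + \langle \rightshift_b(B)\rangle + \langle z_{ij}: 1\le i\le a, 1\le j\le b\rangle\Bigr) = \bigl(\langle\downshift_a(A)\rangle + \mathfrak{m}\bigr) \cap \bigl(\langle\rightshift_b(B)\rangle + \mathfrak{m}\bigr),
\]
where $\mathfrak{m} = \langle z_{ij}: 1\le i\le a, 1\le j\le b\rangle$; more usefully, since $\downshift_a(A)$ and $\rightshift_b(B)$ involve disjoint variable sets disjoint also from the block $\mathfrak{m}$, the sum ideal's minimal monomial generators factor as (generator of $\downshift_a(A)$) $\cdot$ (generator of $\rightshift_b(B)$) is \emph{not} how sums work — rather I would argue directly that a monomial lies in the sum iff it is divisible by a generator of one of the three pieces, and then distribute the intersection over unions. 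Writing $\init{I_u} = \bigcap_{Q \in \bpd u}\mathcal{L}_Q$ and $\init{I_v} = \bigcap_{S \in \bpd v}\mathcal{L}_S$, and using that $\mathcal{L}$ is built from diagrams, the right choice is to show
\[
\init{I_w} = \bigcap_{(Q,S) \in \bpd u \times \bpd v} \Bigl( \langle z_{i+a,\,j} : (i,j) \in D(Q)\rangle + \langle z_{i,\,j+b} : (i,j) \in D(S) \rangle + \mathfrak{m} \Bigr),
\]
by distributing the two intersections defining $\init{I_u}$ and $\init{I_v}$ through the sum, using that intersection distributes over sums of monomial ideals in pairwise-disjoint variable sets (this is where I'd be careful: intersection distributes over sums of monomial ideals precisely because monomial ideals are ``combinatorial'', i.e.\ $(A+C)\cap(B+C) = (A\cap B) + C$ holds for monomial ideals, and iterating handles the two-fold intersection).

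Finally, I would identify the $(Q,S)$-term of the displayed intersection with $\mathcal{L}_{B_w}$ where $B_w$ is the bumpless pipe dream of $w$ built from $(Q,S)$ via Lemma~\ref{l:block_bpd}: indeed $D(B_w)$ consists of the ``all boxes'' block $\{(i,j): 1\le i\le a, 1\le j\le b\}$ — contributing $\mathfrak{m}$ — together with a shifted-down copy of $D(Q)$ and a shifted-right copy of $D(S)$, so $\mathcal{L}_{B_w}$ is exactly the sum of the three monomial ideals in the term. Since $(Q,S)\mapsto B_w$ is a bijection $\bpd u \times \bpd v \to \bpd w$ by Lemma~\ref{l:block_bpd}, this yields $\init{I_w} = \bigcap_{P \in \bpd w}\mathcal{L}_P$, and this ideal is radical as an intersection of squarefree monomial ideals. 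The main obstacle I anticipate is the bookkeeping in the middle step: verifying that the ``$\downshift$/$\rightshift$ of a Gröbner basis'' description of $\init{I_w}$ genuinely agrees with $\langle\downshift_a(\init{I_u})\rangle + \langle\rightshift_b(\init{I_v})\rangle + \mathfrak{m}$ as ideals (not just that one contains the other), and that intersection distributes correctly over these sums — both are routine for monomial ideals in disjoint variables but require stating the disjointness hypotheses precisely so the distributive laws apply.
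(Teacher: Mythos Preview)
Your overall strategy is sound and, once repaired, yields a more elementary argument than the paper's. The paper does \emph{not} invoke Lemma~\ref{l:block_grobner} here; it only records the containment $\init{I_{\widetilde u}}+\init{I_{\widetilde v}}\subseteq\init{I_w}$ (from $I_w=I_{\widetilde u}+I_{\widetilde v}$), identifies the left side with $\bigcap_{P\in\bpd w}\mathcal L_P$ via Lemma~\ref{l:block_bpd}, and then upgrades the containment to equality using equivariant cohomology and Lemma~\ref{l:cheating_lemma}. By contrast, your use of Lemma~\ref{l:block_grobner} gives $\init{I_w}=\langle\downshift_a(\init{I_u})\rangle+\langle\rightshift_b(\init{I_v})\rangle+\mathfrak m$ as an \emph{equality} outright, so no cohomology is needed. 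Your distributivity step is also easier than you fear: the identity $\bigl(\bigcap_Q A_Q\bigr)+\bigl(\bigcap_S B_S\bigr)+C=\bigcap_{Q,S}(A_Q+B_S+C)$ holds for \emph{arbitrary} monomial ideals, simply because a monomial lies in a sum of monomial ideals iff it lies in one of the summands. The disjoint-variables hypothesis is not required.

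There is, however, a genuine false step: the implication ``$\init{I_{u^{(i)}}}$ is radical, hence $u^{(i)}$ is CDG'' is invalid. Being CDG is the assertion that a \emph{specific} generating set is Gr\"obner, which is not implied by the shape of the initial ideal (the paper itself notes $13254$ is not CDG, independent of any radicality statement). Fortunately this is inessential to your argument: Lemma~\ref{l:block_grobner} applies to \emph{any} Gr\"obner bases $F_u,F_v$ of $I_u,I_v$, and such bases always exist. Since $\langle\init{F_u}\rangle=\init{I_u}$ by definition, applying Lemma~\ref{l:block_grobner} to arbitrary $F_u,F_v$ still yields $\init{I_w}=\langle\downshift_a(\init{I_u})\rangle+\langle\rightshift_b(\init{I_v})\rangle+\mathfrak m$. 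Drop the CDG claim and the identification with $G_w$ (neither is needed for the proposition as stated), and your proof is complete.
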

\begin{proof}
Fix partial permutations $u\in M_{p,n}$ and $v\in M_{m,q}$.  It is enough to consider the case when $w=u\boxplus v$. 

Write $\widetilde{u}$ for the partial permutation matrix obtained by prepending $m$ rows of $0$'s to $u$.  Likewise, let $\widetilde{v}$ be the partial permutation obtained by prepending $n$ columns of $0$'s to $v$.
Observe that 
\[
I_{\widetilde{u}} = \langle z_{ij}: (i,j) \in [m] \times [n] \rangle + \langle\downshift_{m}(I_u)\rangle \quad \mbox{and} \quad I_{\widetilde{v}} = \langle z_{ij}: (i,j) \in [m] \times [n] \rangle + \langle\rightshift_{n}(I_v)\rangle.
\]
Therefore $I_w = I_{\widetilde{u}}+I_{\widetilde{v}}$, so
\[
\init{I_{\widetilde{u}}}+\init{I_{\widetilde{v}}}\subseteq \init{I_w}.
\]
Since $\init{I_u}$ and $\init{I_v}$ are radical, so are $\init{I_{\widetilde{u}}}$ and $\init{I_{\widetilde{v}}}$.
In particular,
\begin{align*}
\init{I_{\widetilde{u}}}+\init{I_{\widetilde{v}}}&=\bigcap_{P_1\in \bpd{\widetilde{u}},P_2\in \bpd{\widetilde{v}}}\mathcal L_{P_1\cup P_2}\\
&= \bigcap_{P\in \bpd{w}} \mathcal L_P,
\end{align*}
with the first equality by hypothesis since $D(P_1) \cap D(P_2) = [m] \times [n]$ and the second from Lemma~\ref{l:block_bpd}.
Therefore, \[\left[\Spec R/ \bigcap_{P\in \bpd{w}} \mathcal L_P\right]_{T \times T}=\Schub_w(\mathbf x;\mathbf y).\]
Furthermore, $\init{I_{\widetilde{u}}}+\init{I_{\widetilde{v}}}$ is an equidimensional radical ideal, contained in $\init{I_w}$.  Since \[\left[\Spec R/\init{I_w}\right]_{T \times T}=\Schub_w(\mathbf x;\mathbf y)\] as well, we apply Lemma~\ref{l:cheating_lemma} to conclude \[\init{I_w}=\bigcap_{P\in \bpd{w}}\mathcal L_P\] and $\init{I_w}$ is radical.
\end{proof}

Theorem~\ref{t:banner} is a special case of Conjecture~\ref{c:main}, and provides further evidence for the general statement of the conjecture.

\section{Future directions}
\label{s:future}

Theorem~\ref{t:banner} does not exhaust the set of all CDG permutations.
For example, $w=25143$ is not banner, but one can compute that its CDG generators are a diagonal Gr\"obner basis for $I_w$.
We will conjecture a complete characterization of CDG permutations, but first must recall the notion of pattern avoidance.
For $a = a_1 \dots a_n$ a sequence of distinct integers, let $\sigma(a) = \sigma_1 \dots \sigma_n$  be the unique permutation in  $\mathcal{S}_n$ so that $\sigma_i < \sigma_j$ if and only if $a_i < a_j$ for all $i,j$.
The permutation $w = w_1 \dots w_n$ \textbf{contains} the permutation $v = v_1 \dots v_k$ if there is a subsequence  $w' = w_{i_1}\dots w_{i_k}$ of $w$ with $\sigma(w') = v$.
If $w$ does not contain $v$, then $w$ \textbf{avoids} $v$.

We conjecture the following characterization of CDG permutations.

\begin{conjecture}\label{conj:patterns}
Let $w$ be a permutation. The CDG generators are a diagonal Gr\"obner basis for $I_w$ if and only if $w$ avoids all eight of the patterns
\[
13254,21543,214635,215364,241635,315264,215634,4261735.
\]

\end{conjecture}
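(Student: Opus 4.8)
We would prove the two implications of Conjecture~\ref{conj:patterns} separately, using throughout the template from the proof of Proposition~\ref{p:predominant}: to show that $w$ is CDG and that $\init{I_w}=\bigcap_{P\in\bpd{w}}\mathcal L_P$, it suffices to prove that the monomial ideal $J_w=\langle \init g : g\in G_w\rangle$ equals $\bigcap_{P\in\bpd{w}}\mathcal L_P$. Indeed, such an identity forces $J_w$ to be radical and equidimensional; it is contained in $\init{I_w}$ by construction; and it has equivariant class $\sum_P \wt(P)=\Schub_w(\mathbf x;\mathbf y)$ by Theorem~\ref{thm:BPDformula}, which agrees with $[\Spec R/\init{I_w}]_{T\times T}$ by Theorem~\ref{t:KM} and the degeneration property, so Lemma~\ref{l:cheating_lemma} yields $J_w=\init{I_w}$. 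Thus the whole conjecture amounts to computing $J_w$ for permutations in, and just outside, the avoidance class.

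\textbf{Necessity.} The first step is to establish that the CDG property is closed under pattern containment: if $w$ contains $v$, then $w$ CDG implies $v$ CDG. The plan is to realize the passage from $w$ to a pattern as a composition of elementary moves — deleting the row and column through a dot of $w$ that is northwest-most or southeast-most among the chosen window — and to check that each such move carries $G_w$ and $\init{I_w}$ to the corresponding data for the smaller partial permutation after specializing the relevant $z_{ij}$ to $0$ or $1$ and re-indexing (this is the matrix-Schubert analogue of localizing a Schubert variety to a Kazhdan--Lusztig slice). Granting this closure property, necessity follows once one verifies by computer that each of the eight listed permutations is non-CDG and is minimal in pattern order among non-CDG permutations; the nontrivial remaining task is to show there is \emph{no} other minimal obstruction. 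For this one needs an a priori bound on the size of a minimal non-CDG permutation. The natural route is to show that if $\Dom(w)$, $\Ess(w)$, or the one-line notation of $w$ is large enough, then analyzing the transition outputs $v$ and $u^{(i)}$ forces non-CDG-ness to be witnessed already by one of them, or by a pattern of one of them, hence by a strictly smaller non-CDG permutation.

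\textbf{Sufficiency.} For the converse we must show that every permutation avoiding all eight patterns is CDG, which goes beyond Theorem~\ref{t:banner} since (as noted) $25143$ is CDG but not banner. The plan is to extend the monomial-ideal recurrence of Theorem~\ref{thm:predom_transition} from block predominant permutations to the full avoidance class: (i) show the avoidance class is closed under the block-sum decomposition and under transition (Corollary~\ref{cor:predom_transition} is the predominant prototype), reducing to a single indecomposable block whose transition outputs $v,u^{(1)},\dots,u^{(k)}$ again avoid the patterns; (ii) reprove the analogues of Lemma~\ref{lem:containment} and Lemma~\ref{lem:good_ideal} in this generality, identifying exactly which configurations of essential boxes produced by diagram-marching could defeat the pigeonhole/support arguments, and checking that every such configuration embeds one of the forbidden patterns; (iii) conclude $J_w=(J_v+\langle z_{rs}\rangle)\cap\bigcap_i J_{u^{(i)}}$ and run the induction of Proposition~\ref{p:predominant} verbatim, invoking Lemma~\ref{lemma:diagramtransition} to match $\bigcap_i$ of the coordinate ideals with $\bigcap_{P\in\bpd{w}}\mathcal L_P$.

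\textbf{The main obstacle.} I expect step (ii) to be the crux. Outside the predominant setting, $\Dom(u^{(i)})$ can differ from $\Dom(w)$ in complicated ways and diagram-marching can scatter new essential cells across many rows and columns, so the clean description in Lemma~\ref{lem:transition_essential} fails and the argument of Theorem~\ref{thm:predom_transition} does not transfer directly. What is needed is a combinatorial invariant of the essential set of the transition outputs that is both controlled by pattern avoidance and strong enough to drive the $\init$ pigeonhole estimates; the eight patterns should be precisely the minimal diagrams on which such an invariant degenerates, and turning that heuristic into a proof is the heart of the problem. A secondary difficulty is the finiteness statement in the necessity direction, where one must rule out arbitrarily large minimal obstructions — a bound that, like Conjecture~\ref{c:main}, is at present supported only by computation.
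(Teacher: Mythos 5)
This statement is Conjecture~\ref{conj:patterns}; the paper does not prove it, offering only computational verification through $S_8$ and explicitly flagging ``Proving that the CDG property is governed by pattern avoidance'' as an open question. Your proposal is therefore not being measured against an existing proof, and on its own terms it is a research program rather than a proof: every step that would actually establish the conjecture is deferred, and you say so yourself.

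Concretely, three gaps are each fatal as written. First, necessity rests on the claim that CDG-ness is inherited by patterns, which you assert as a plan (``delete the row and column through a dot\dots and check that each such move carries $G_w$ and $\init{I_w}$ to the corresponding data''). This is exactly the open problem the paper points to. The CDG generators are built from $\Dom(w)$ and $\Esshat(w)$, neither of which transforms simply under deleting a dot's row and column, and a diagonal term order on the big matrix does not obviously restrict to one compatible with the specialization $z_{ij}\mapsto 0,1$; none of this is verified. Second, even granting closure, you need the eight listed permutations to be the \emph{complete} list of minimal non-CDG permutations (this is what sufficiency amounts to, though your write-up files it under necessity), and you give no mechanism for bounding the size of a minimal obstruction beyond ``the natural route is\dots''. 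Third, your alternative route to sufficiency --- extending Theorem~\ref{thm:predom_transition} to the full avoidance class --- founders exactly where you say it does: Lemma~\ref{lem:transition_essential} and the support/pigeonhole arguments in Lemmas~\ref{lem:containment} and~\ref{lem:good_ideal} use the predominant shape in an essential way, the avoidance class is not shown to be closed under transition or block decomposition, and no substitute invariant is produced. The reduction template in your first paragraph (prove $J_w=\bigcap_{P\in\bpd{w}}\mathcal L_P$ and invoke Lemma~\ref{l:cheating_lemma}) is sound and is indeed the paper's method in Propositions~\ref{p:predominant} and~\ref{p:block_CDG}, but identifying the right strategy is not the same as executing it; as it stands the proposal proves nothing beyond what the paper already establishes.
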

We checked that every permutation in $S_8$ avoiding these patterns is CDG. 
Proving that the CDG property is governed by pattern avoidance is a question of independent interest. 
A corollary of Conjecture~\ref{conj:patterns} is the following.
\begin{conjecture}\label{conj:multfree}
Suppose $\Schub_w$ is a multiplicity-free sum of monomials. 
Then the CDG generators are a diagonal Gr\"obner basis for $I_w$.
\end{conjecture}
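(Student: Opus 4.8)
The plan is to deduce Conjecture~\ref{conj:multfree} from Conjecture~\ref{conj:patterns} together with the pattern-avoidance description, due to Fink, M\'esz\'aros, and St.~Dizier, of those $w$ for which $\Schub_w$ is a multiplicity-free sum of monomials; that description says $\Schub_w$ is multiplicity-free exactly when $w$ avoids an explicit finite list $\mathcal M$ of short patterns. Granting Conjecture~\ref{conj:patterns}, both ``$\Schub_w$ multiplicity-free'' and ``$w$ CDG'' are pattern-avoidance conditions, so the desired implication is equivalent to the assertion that the set of permutations avoiding $\mathcal M$ is contained in the set of permutations avoiding the eight patterns $13254$, $21543$, $214635$, $215364$, $241635$, $315264$, $215634$, $4261735$. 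Because pattern containment is transitive, this in turn reduces to the purely combinatorial claim that \emph{each of these eight patterns contains a member of $\mathcal M$}, which is a finite check; verifying it would complete the proof.

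Carrying out that check, the nontrivial work is small. Two of the eight patterns, $13254$ and $21543$, belong to $\mathcal M$ outright, and one confirms that the length-six patterns $215364$, $315264$, and $215634$ do as well (for $215634$ this is consistent with a direct computation: $\Schub_{21543}$ already has a repeated monomial, e.g.\ $x_1^2x_2x_3$, as one sees by enumerating reduced pipe dreams or by one step of transition, and $215634$ has $21543$ with a trailing fixed point occurring in its transition expansion, so $\Schub_{215634}$ is not multiplicity-free). The remaining two patterns each contain a strictly shorter member of $\mathcal M$: deleting the entry of $214635$ that plays the role of the ``$2$'' leaves $13524$, and deleting the ``$1$'' of $241635$ likewise leaves $13524$, with $13524\in\mathcal M$ since $\Schub_{13524}=s_{(2,1)}(x_1,x_2,x_3)$ is not multiplicity-free; one handles $4261735$ by exhibiting a member of $\mathcal M$ inside it in the same way.

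The principal obstacle is that the above argument is only as strong as Conjecture~\ref{conj:patterns}, which is open. An unconditional proof would ideally bypass Gr\"obner bases and apply Lemma~\ref{l:cheating_lemma} directly: the monomial ideal $J_w=\langle \init g : g\in G_w\rangle$ is squarefree, hence radical, it is contained in $\init{I_w}$, and $[\Spec R/\init{I_w}]_{T\times T}=\Schub_w$ by degeneration and Theorem~\ref{t:KM}, so it would be enough to show that $J_w$ is equidimensional with $[\Spec R/J_w]_{T\times T}=\Schub_w$ whenever $\Schub_w$ is multiplicity-free---equivalently, that $\init{I_w}=\bigcap_{P\in\bpd{w}}\mathcal L_P$ in that case. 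Multiplicity-freeness does force every top-dimensional component of $\init{I_w}$ to occur with multiplicity one, so the expected answer has the right shape. But proving it amounts to a transition-style recurrence for $\init{I_w}$ valid well beyond the block predominant permutations of Theorem~\ref{thm:predom_transition}, and since distinct diagonal term orders can yield distinct initial ideals in general, it is not even clear that a single uniform recurrence exists. I expect establishing such a recurrence (or some substitute for it) to be the crux of any unconditional proof.
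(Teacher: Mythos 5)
Your derivation is exactly the one the paper intends: Conjecture~\ref{conj:multfree} is stated only as a consequence of Conjecture~\ref{conj:patterns} together with the Fink--M\'esz\'aros--St.~Dizier pattern characterization of multiplicity-free Schubert polynomials, and your finite containment check (including the last case, where $4261735$ contains $315624$) correctly supplies the details the paper leaves implicit. As you note, the statement remains conjectural since Conjecture~\ref{conj:patterns} is open; the paper offers nothing beyond this same conditional reduction.
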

Conjecture~\ref{conj:multfree} would follow immediately from Conjecture~\ref{conj:patterns} by the known pattern characterization of those $w$ in Conjecture~\ref{conj:multfree} \cite{Fink.Meszaros.StDizier} (see also \cite[P0055]{Tenner:database}).

If Conjecture~\ref{c:main} holds, then as a consequence, $\Spec R/\init{I_w}$ is reduced if and only if each $P\in\bpd{w}$ has a distinct diagram.  Data suggests that this condition is also governed by pattern containment (see \cite{Heck}).

\section*{Acknowledgments}
OP was partially supported by a Mathematical Sciences Postdoctoral Research Fellowship (\#1703696) from the National Science
Foundation.

We are very grateful for  helpful conversations with Patricia Klein, Allen Knutson, Jenna Raj\-chgot, David Speyer, and Alexander Yong.

%
%

\bibliographystyle{amsalpha} 
\bibliography{dominant}

\end{document}